\documentclass[12pt, a4paper]{amsart}
\usepackage{amsmath}
\usepackage{amsthm,graphics,tabularx,amssymb,shapepar}
\usepackage{mathtools}
\usepackage{stmaryrd}
\usepackage{fullpage}
\usepackage{enumerate,enumitem}
\usepackage{caption}
\usepackage{tikz-cd}
\usepackage{amscd}
\usepackage[cmyk]{xcolor}
\usepackage{comment}
\excludecomment{review}

\usepackage{hyperref}
\usepackage[
backend=biber,
style=alphabetic,
sorting=nyt,
maxnames=9,
minnames=1,
maxalphanames=99,
]{biblatex}
\addbibresource{ref.bib}

\makeatletter
\newcommand*{\rom}[1]{\expandafter\@slowromancap\romannumeral #1@}
\makeatother

\newcommand{\BC}{{\mathbb {C}}}

\newcommand{\BH}{{\mathbb {H}}}

\newcommand{\BR}{{\mathbb {R}}}
\newcommand{\BS}{{\mathbb {S}}}

\newcommand{\BW}{{\mathbb {W}}}

\newcommand{\BZ}{{\mathbb {Z}}}

\newcommand{\CE}{{\mathcal {E}}}

\newcommand{\CN}{{\mathcal {N}}}

\newcommand{\CS}{{\mathcal {S}}}
\newcommand{\CT}{{\mathcal {T}}}

\newcommand{\RD}{{\mathrm {D}}}

\newcommand{\RH}{{\mathrm {H}}}

\newcommand{\RU}{{\mathrm {U}}}

\newcommand{\CWR}{{\mathrm{CW}}}

\newcommand{\EP}{{\mathrm{EP}}}

\newcommand{\GL}{{\mathrm{GL}}}

\newcommand{\Hom}{{\mathrm{Hom}}}
\newcommand{\Herm}{{\mathrm{Herm}}}

\renewcommand{\Im}{{\mathrm{Im}}}
\newcommand{\Ind}{{\mathrm{Ind}}}
\newcommand{\ind}{{\mathrm{ind}}}

\newcommand{\Isom}{{\mathrm{Isom}}}

\newcommand{\Ker}{{\mathrm{Ker}}}

\newcommand{\Mp}{{\mathrm{Mp}}}

\renewcommand{\Re}{{\mathrm{Re}}}

\newcommand{\Rep}{{\mathrm{Rep}}}

\newcommand{\SU}{{\mathrm{SU}}}

\newcommand{\Sp}{{\mathrm{Sp}}}
\newcommand{\Stab}{{\mathrm{Stab}}}
\newcommand{\Span}{{\mathrm{Span}}}

\newcommand{\Tr}{\operatorname{Tr}}
\newcommand{\Trd}{\operatorname{Trd}}

\newcommand{\Nrd}{\operatorname{Nrd}}

\newcommand{\oH}{\operatorname{H}}

\newcommand{\g}{\mathfrak g}

\renewcommand{\k}{\mathfrak k}

\renewcommand{\b}{\mathfrak b}

\newcommand{\n}{\mathfrak n}

\renewcommand{\v}{\mathfrak v}

\renewcommand{\t}{\mathfrak t}

\newcommand{\be}{\begin {equation}}
\newcommand{\ee}{\end {equation}}
\newcommand{\bee}{\begin {equation*}}
\newcommand{\eee}{\end {equation*}}

\newcommand{\fre}{{Fr\'{e}chet }}
\newcommand{\kun}{{K\"{u}nneth }}
\newcommand{\poin}{{Poincar\'{e} }}
\newcommand{\smod}{\CS\mathrm{mod}}

\newcommand{\ta}{\prescript{t}{}{a}}

\newcommand{\kX}{\prescript{\kappa}{}{X}}

\DeclareMathOperator{\otimeshat}{\widehat{\otimes}}

\newtheorem{thm}{Theorem}[section]
\newtheorem{cort}[thm]{Corollary}
\newtheorem{lemt}[thm]{Lemma}
\newtheorem{prpt}[thm]{Proposition}

\theoremstyle{remark}

\theoremstyle{remark}

\newcommand{\HX}[1]{{\color{red} {#1}}}

\begin{document}
\title{Casselman--Wallach Property for Homological Theta Lifting}
\author[Z. Geng]{Zhibin Geng}
\address{Department of Mathematics and New Cornerstone Science Laboratory, The University of Hong Kong}
\email{gengzb@hku.hk}

\author{Hang Xue}
\address{Department of Mathematics, The University of Arizona}
\email{xuehang@arizona.edu}

\date{\today}

\keywords{Theta lifting; Corank-one parabolic stable filtration; Casselman--Wallach representations}
\subjclass[2020]{11F27, 22E50}

\begin{abstract}
    In this paper, we establish the Casselman--Wallach property for homological theta lifting over archimedean local fields. As a consequence, the Euler--\poin characteristic is a well-defined element in the Grothendieck group of Casselman--Wallach representations. Our main tool is a corank-one parabolic stable filtration on the Weil representation.
\end{abstract}

\maketitle
\tableofcontents

\section{Introduction}
Let $F$ be an archimedean local field. Let $\BW$ be a finite-dimensional symplectic space over $F$. Fix a non-trivial unitary character $\psi: F \to \BC^{\times}$.
Let $(G', H')$ be a reductive dual pair in $\Sp(\BW)$. 
In the context of theta lifting, one associates to $(G',H')$ a corresponding pair $(G, H)$ 
(roughly speaking, $(G, H)$ differs from $(G', H')$ only in that certain symplectic factors are replaced by their metaplectic double cover; see Subsection \ref{Subsec: ThetaCorrespondence} for the precise statement).
Denote by $\Omega_{G,H,\psi}$ the associated Weil representation of $G \times H$. We may omit the subscript when no confusion arises.

Write $\Rep_G^{\mathrm{CW}}$ (resp.\ $\Rep_H^{\mathrm{CW}}$) for the category of Casselman–Wallach representations of $G$ (resp.\ $H$). For $\pi \in \Rep_G^{\mathrm{CW}}$, let $\pi^{\vee}$ denote its contragredient. Consider the coinvariant space
\[
\Theta_0(\pi) := (\Omega \otimeshat \pi^{\vee})_G.
\]
It should be noted that, a priori, it is not clear whether $\Theta_0(\pi)$ is Hausdorff.
The usual full theta lift $\Theta(\pi)$ of $\pi$ is defined as the maximal Hausdorff quotient of $\Theta_0(\pi)$, namely
\[
\Theta(\pi) := (\Omega \otimeshat \pi^{\vee})_G / \overline{\{0\}}.
\]
For a general introduction to the theory of theta lifting, we refer the reader to \cite{MVW1987Theta} and \cite{GKT2025ThetaBook}. 
A brief overview is also provided in Subsection \ref{Subsec: ThetaCorrespondence}.

A natural question is whether $\Theta_0(\pi)$ is already Hausdorff, so that the quotient by $\overline{\{0\}}$ is unnecessary. An affirmative answer will lead to the homological formulation of the theta lifting (in the sense of Schwartz homology), and naturally raises the further question of whether the higher homology groups are also Casselman--Wallach representations.
See Section \ref{Sec: Schwartz homology} for the basic notions of Schwartz homology.
The main result of this paper provides an affirmative answer to the above question and serves as a preliminary result for the homological theta lifting.
The precise statement is as follows.

\begin{thm}\label{ThmA}
Let $\pi$ be a Casselman--Wallach representation of $G$. For all $i \geq 0$, the Schwartz homology groups
    \[
    \oH_i^{\CS}(G,\;\Omega\otimeshat\pi),
    \]
are Casselman--Wallach representations of $H$, and equal to zero when $i > \dim (G/K)$, where $K$ is a maximal compact subgroup of $G$. In particular, they are Hausdorff.
\end{thm}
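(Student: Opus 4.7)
The plan is to combine two inputs: the corank-one parabolic stable filtration of $\Omega$ established earlier in the paper, and the standard homological machinery of Schwartz homology (Shapiro's lemma for Schwartz induction, the Hochschild--Serre spectral sequence for a parabolic subgroup, and the closure of the CW category under extensions inside the category of smooth moderate growth Fr\'echet modules). The induction is on the size of the dual pair $(G, H)$, the base case being one in which the $G$-factor is so small that $\Omega$ is essentially a character and the claim is immediate.

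The vanishing $\oH_i^{\CS}(G, \Omega \otimeshat \pi) = 0$ for $i > \dim(G/K)$ is a general dimension bound for Schwartz homology. For any smooth moderate growth $G$-module $V$, the functor $\oH_*^{\CS}(G, V)$ is computed by a Koszul-type complex modeled on the symmetric space $G/K$, and therefore vanishes in degrees exceeding $\dim(G/K)$. Since both $\Omega$ and $\pi$ are of moderate growth, so is $\Omega \otimeshat \pi$, and the bound applies verbatim.

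For the CW property, let $P = LN$ be a corank-one parabolic of $G$. The main technical construction of the paper provides a finite $P \times H$-stable filtration of $\Omega$ whose successive quotients are Schwartz-induced from Weil representations of strictly smaller dual pairs. Tensoring with $\pi$ and applying Schwartz homology yields a convergent spectral sequence whose $E_1$-page is built from the $G$-homology of the graded pieces. Using Shapiro's lemma together with the Hochschild--Serre spectral sequence for $P = LN$, combined with the Iwasawa decomposition $G = KP$ to pass from $P$-stable data to genuine $G$-homology, each contribution can be re-expressed as Schwartz homology of a strictly smaller dual pair acting on an appropriate quotient of $\pi$. By the inductive hypothesis each such contribution is CW as an $H$-module, and the extension-closure of the CW category then propagates the property through all pages of the spectral sequence and to the abutment, yielding the desired conclusion for $\oH_i^{\CS}(G, \Omega \otimeshat \pi)$.

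The principal obstacle is maintaining the CW property, in particular Hausdorffness, through the various topological operations: one must verify that Schwartz inductions and completed tensor products yield strict exact sequences of smooth moderate growth modules, and that each filtration occurring in the spectral sequence has Hausdorff quotients at every stage. Here the automatic-continuity consequences of the Casselman--Wallach theorem are essential. A second subtlety is the bookkeeping needed to transition from the $P$-stable filtration of $\Omega$ to genuine $G$-homology, which requires a compatibility argument between the parabolic analysis and the Iwasawa-type decomposition so that the resulting $H$-action on the homology is continuous and realizes a CW module, rather than merely a Harish-Chandra module equipped with an abstract $H$-action.
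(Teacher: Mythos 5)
The proposal identifies the right ingredients (corank-one filtration, Shapiro's lemma, Hochschild--Serre, Künneth, induction on rank, vanishing above $\dim(G/K)$) but misses the pivotal reduction that makes the filtration usable. You write that tensoring the $P\times H$-stable filtration of $\Omega$ with $\pi$ and applying Schwartz homology ``yields a convergent spectral sequence whose $E_1$-page is built from the $G$-homology of the graded pieces.'' This step fails: the filtration is only $P\times H$-stable, not $G\times H$-stable, so it induces no filtration of $\Omega\otimeshat\pi$ in $\smod_{G\times H}$ and hence no spectral sequence abutting to $\oH_*^{\CS}(G,\Omega\otimeshat\pi)$. Your appeal to the Iwasawa decomposition $G=KP$ as a bridge ``to pass from $P$-stable data to genuine $G$-homology'' is not a real mechanism here; there is no general device that extracts $G$-homology of an arbitrary CW module from $P$-equivariant data.

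The missing idea is the two-stage reduction via Casselman's subrepresentation theorem. The paper first shows (Proposition \ref{Prop: principal series implies all}) that it suffices to prove the statement when $\pi$ is replaced by a principal series $\Ind_{P_{\min}}^G V$; this uses a downward induction on $i$ starting above $\dim(G/K)$, an induction on the length of $\pi$, and the topological long-exact-sequence lemma of Hemmer--Taylor (Lemma \ref{Lem: Hausdorff LES}) to propagate Hausdorffness and finite length through the connecting maps. Only after that reduction is Shapiro's lemma applied, converting $\oH_i^{\CS}(G,\Omega\otimeshat\Ind_{P_{\min}}^G V)$ into $\oH_i^{\CS}(P_{\min},\Omega\otimeshat V\otimes\delta_{P_{\min}}^{-1})$, at which point the $P$-stable filtration genuinely filters the module being resolved and the spectral sequence makes sense. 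Without the principal-series reduction, the filtration cannot be brought to bear at all.

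One further overstatement: the base of the induction is not trivial. For Type I the base is the compact dual pair, where finite length rests on Howe's results on compact dual pairs; for Type II the base is $(\GL_1,\GL_m)$, which requires a nontrivial explicit computation of the connecting map in the relevant long exact sequence (Subsection \ref{Subsec: (GL1,GLm)}). Calling these cases one in which ``$\Omega$ is essentially a character and the claim is immediate'' understates what is needed. The vanishing bound via the Koszul complex for $(\g,K)$-homology is correct as stated.
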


By this theorem, the Euler--\poin characteristic
  \[
    \EP_{G}(\Omega \otimeshat \pi):=\sum_{i\ge0}(-1)^i \oH_i^{\CS} (G,\; \Omega\otimeshat\pi)
  \]
is a well-defined element in the Grothendieck group of $\Rep_H^{\mathrm{CW}}$. 
As pointed out in \cite{APS2017EP}, this Euler--\poin characteristic is often more accessible than the full theta lift itself, and is expected to provide insights into the structure of the latter.

The study of homological aspects of theta lifts in the $p$-adic setting was initiated in \cite{APS2017EP}. Our results can be regarded as the corresponding archimedean analogue. In this setting, we work with topological vector spaces in contrast to merely vector spaces in the $p$-adic setting, which gives rise to many additional difficulties and technical complications.

Our main tool is a corank-one parabolic stable filtration on the Weil representation. This is the archimedean counterpart of the filtration first studied by Kudla.
In the archimedean setting, due to the non-exactness of the Jacquet functors and the presence of the transverse derivatives, the general filtration takes a much more complicated shape.
In this paper, we restrict ourselves to the corank-one case, 
which suffices for our purposes and admits a particularly clean form.

We now briefly describe the proof of Theorem~\ref{ThmA}, which contains some of the major innovations of this paper. First, we may assume $(G, H)$ is an irreducible dual pair.
Using Casselman’s subrepresentation theorem together with a lemma from \cite{HT1990AnalyticLocalization} concerning the topology of long exact sequences (see Lemma \ref{Lem: Hausdorff LES}), we reduce Theorem \ref{ThmA} to the following.

\begin{thm}\label{Thm: principal series CW}
    Let $(G, H)$ and $\Omega$ be as before, and let $P_{\min} = L \ltimes N$ be a minimal parabolic subgroup of $G$. Let $V$ be an irreducible representation of $L$. Denote by $\Ind_{P_{\min}}^G (V)$ the corresponding principal series representation. Then, for all $i \geq 0$,
    $$
    \oH^{\CS}_{i}(G, \Omega \otimeshat \Ind_{P_{\min}}^G (V) )
    $$
    is a Casselman--Wallach representation of $H$.
\end{thm}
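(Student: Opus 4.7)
The plan is to reduce by Shapiro's lemma to $P_{\min}$-homology, unfold $P_{\min} = L \ltimes N$ through a Hochschild--Serre spectral sequence, and then use the corank-one parabolic stable filtration on $\Omega$ to express $\oH^{\CS}_{\bullet}(N,\Omega)$ in terms of Weil representations of strictly smaller dual pairs. The whole argument runs by induction on the split rank of $G$ (or equivalently on the total size of the dual pair), with the base case being split rank zero.

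For the Shapiro step, compactness of $G/P_{\min}$ identifies parabolic induction with compactly supported induction, so I expect an isomorphism
\[
\oH^{\CS}_i\bigl(G,\; \Omega \otimeshat \Ind_{P_{\min}}^G V\bigr) \;\cong\; \oH^{\CS}_i\bigl(P_{\min},\; \Omega \otimeshat V \otimes \delta_{P_{\min}}^{-1}\bigr)
\]
after a modular character normalization. The Hochschild--Serre spectral sequence for $P_{\min} = L \ltimes N$, using that $V$ is trivial on $N$, then gives
\[
E^2_{p,q} \;=\; \oH^{\CS}_p\bigl(L,\; \oH^{\CS}_q(N,\Omega) \otimeshat V \otimes \delta_{P_{\min}}^{-1}\bigr) \;\Longrightarrow\; \oH^{\CS}_{p+q}\bigl(P_{\min},\; \Omega \otimeshat V \otimes \delta_{P_{\min}}^{-1}\bigr).
\]
Since $L$ is an almost direct product of a split central torus with a compact group, $L$-homology behaves well and preserves Casselman--Wallach-ness of its coefficients.

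The heart of the proof is the analysis of $\oH^{\CS}_{\bullet}(N,\Omega)$. Choose a maximal parabolic $P_1 = M_1 \ltimes N_1 \supseteq P_{\min}$ of $G$; then $N_1 \trianglelefteq N$, and $N/N_1$ is identified with the unipotent radical of the minimal parabolic $P_{\min}\cap M_1$ of $M_1$. The corank-one parabolic stable filtration supplies $\Omega$ with a $P_1 \times H$-stable filtration whose graded pieces are of the form $\Ind_{Q}^{H}\bigl(\sigma \otimeshat \Omega'\bigr)$, where $Q \subseteq H$ is a parabolic with Levi $L_Q$, $\sigma$ is a representation of $L_Q$, and $\Omega'$ is a Weil representation for a strictly smaller dual pair $(G',H')$ with $G' \subseteq M_1$. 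Taking $\oH^{\CS}_{\bullet}(N_1,-)$ term-by-term (using that $N_1$ acts only through $\Omega$) and then running a second Hochschild--Serre spectral sequence for $N = (N\cap M_1)\ltimes N_1$ reduces the whole computation to $(N\cap M_1)$-homology of smaller Weil representations, tensored with parabolic inductions from $H$. That residual computation is precisely the target of the theorem applied to the smaller dual pair with the principal series $\Ind_{P_{\min}\cap M_1}^{M_1} V$, so the induction hypothesis closes the loop. The base case has $G$ anisotropic modulo centre, in which case $P_{\min} = G$, $V$ is finite-dimensional, and $\oH^{\CS}_i$ vanishes for $i \geq 1$ while degree zero is Casselman--Wallach by the archimedean Howe duality.

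The main obstacle is topological rather than structural: each use of Shapiro's lemma, of the Hochschild--Serre spectral sequence, and of the spectral sequence induced by the Kudla filtration must be executed in nuclear Fr\'echet spaces with constant verification that the intermediate objects are Hausdorff and admissible. Here Lemma \ref{Lem: Hausdorff LES}, combined with the closure of $\Rep_H^{\mathrm{CW}}$ under extensions and parabolic induction, is the decisive input. A further difficulty, absent in the $p$-adic analogue of \cite{APS2017EP}, is that the archimedean corank-one filtration carries transverse-derivative contributions that enlarge the graded pieces beyond Kudla's classical ones; shepherding these through the two spectral sequences is where the bookkeeping requires the most care.
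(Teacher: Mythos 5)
Your outline follows the same skeleton as the paper's argument — Shapiro's lemma, the corank-one parabolic stable filtration, and induction on the split rank of $G$ — and the organisational variant (Shapiro for $P_{\min}$ followed by a nested Hochschild--Serre for $P_{\min} = L\ltimes N$ and $N = (N\cap M_1)\ltimes N_1$, versus the paper's Shapiro for the maximal parabolic $P_1$ combined with induction by stages on the parabolic induction side) is a legitimate reshuffling. However, there are two genuine gaps in the way you describe the key ingredients.

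First, your description of the corank-one filtration is wrong on the crucial closed orbit. You assert that the graded pieces of the $P_1\times H$-stable filtration all look like $\Ind_Q^H(\sigma \otimeshat \Omega')$ with $(G',H')$ a \emph{strictly smaller} dual pair, but this is only the open orbit. The closed orbit $S_Z(M)$ consists of the formal power series at the origin (these are exactly the transverse-derivative terms), and after filtering by total degree its graded pieces have the form $\lambda\otimes F_k\otimes \Omega'$ where $\Omega'$ is the Weil representation of the pair $(G',H)$ with the \emph{same} $H$ — no parabolic induction on the $H$-side occurs, and $H$ does not shrink. This also means your parenthetical ``or equivalently on the total size of the dual pair'' is false: the induction can only run on the split rank of $G$, not on both sides simultaneously. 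Handling the closed piece correctly requires the secondary filtration by polynomial degree together with a vanishing-for-large-degree argument (so that the inverse limit commutes with Schwartz homology, as in Lemma~\ref{Lem: homological filtration}); merely citing ``transverse-derivative bookkeeping'' glosses over the fact that these contributions are structurally unlike the $p$-adic Kudla pieces and demand a separate mechanism.

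Second, the inductive step does not close with Theorem~\ref{Thm: principal series CW} applied to the smaller pair, as you claim. The coefficient systems that actually arise on the $G'$-side after the filtration and the Künneth decomposition are twists of $\mathfrak n_1$-homology constituents tensored with the given CW representation — these are general CW representations, not principal series. So the residual computation is an instance of Theorem~\ref{ThmA} for the smaller pair $(G', H_m)$, not of Theorem~\ref{Thm: principal series CW}. The upgrade from ``principal series'' to ``all CW representations'' at rank $n-1$ is exactly Proposition~\ref{Prop: principal series implies all}, and it must be invoked explicitly in the inductive step; without it the loop does not close. Once you repair these two points, your scheme should go through, albeit with more spectral-sequence bookkeeping than the paper's version, which compresses the $(N\cap M_1)$- and $L'$-homologies into the induction-by-stages step.
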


Frobenius reciprocity quickly reduces Theorem~\ref{Thm: principal series CW} to
    \[
    \oH^{\CS}_{i}(P_{\min}, \Omega \otimeshat V \otimes \delta^{-1}_{P_{\min}}).
    \]
Thus, we need to study the restriction of $\Omega$ to parabolic subgroups. This is where we make use of the corank-one filtration on the Weil representation. 
It allows us to argue by induction on the split rank of $G$.
For Type~I dual pairs, the base case is the compact dual pair, for which Hausdorffness is immediate. 
The finite length property follows from Howe's original work \cite{Howe1989Remarks} and \cite[Section 3]{Howe1989TranscendingClassicalInvTheory}. In fact, in Howe's original proof of Howe duality \cite{Howe1989TranscendingClassicalInvTheory}, compact dual pairs already play a fundamental role in determining the general case.
For Type~II dual pairs, the base case is $(\GL_1,\GL_m)$, which we calculated explicitly in Subsection~\ref{Subsec: (GL1,GLm)}.
For the induction step, we treat Type~I and Type~II dual pairs separately, in Sections \ref{Sec: Type I CW} and \ref{Sec: Type II CW}, respectively. 

\subsection*{Acknowledgments}
ZG thanks Kaidi Wu and Hao Ying for helpful discussions. He also thanks Binyong Sun for his constant support and encouragement.
ZG is partially supported by the New Cornerstone Science Foundation through the New Cornerstone Investigator Program awarded to Professor Xuhua He.
HX is partially supported by the NSF grant DMS \#2154352. 
Some initial discussions related to this work started during the Arizona Winter School 2025.

\section{Preliminaries and Background}\label{Sec: preliminary}
In this section, we recall some preliminary material and fix the notation to be used in the paper.

\subsection{Casselman--Wallach representation}
Let $G$ be a real reductive group with complexified Lie algebra $\g$ and maximal compact subgroup $K$.
A Casselman-Wallach representation of $G$ is a smooth \fre representation of moderate growth and of finite length. The readers may consult \cite[Chapter 11]{Wallach1992Real} or \cite{Bernstein2014Smooth} for details. 
Denote by $\Rep_G^{\mathrm{CW}}$ the category of 
Casselman--Wallach representations of $G$.

By the Casselman--Wallach globalization theorem, the functor of taking $K$-finite vectors induces an equivalence between $\Rep_G^{\mathrm{CW}}$ and the category of Harish--Chandra $(\g,K)$-modules. 
Consequently, morphisms in $\Rep_G^{\mathrm{CW}}$ have well-behaved topological properties, as shown in the following lemma.

\begin{lemt}[{\cite[Corollary 2.2.5]{AGS15Derivative}}]\label{Lem: CW closed image}
    \begin{enumerate}
        \item $\Rep_G^{\mathrm{CW}}$ is an abelian category.
        \item Any morphism in $\Rep_G^{\mathrm{CW}}$ has closed image.
    \end{enumerate}
\end{lemt}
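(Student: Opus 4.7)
\emph{Proof plan.} The strategy is to reduce both assertions to the category of finite-length $(\g,K)$-modules via the Casselman--Wallach globalization theorem, which gives an equivalence of categories between $\Rep_G^{\mathrm{CW}}$ and finite-length Harish--Chandra modules through the functor $\pi\mapsto\pi_K$ of $K$-finite vectors. Two consequences of this equivalence will be used repeatedly: first, any $(\g,K)$-homomorphism between the $K$-finite parts of Casselman--Wallach representations extends uniquely to a continuous $G$-equivariant map of the globalizations (automatic continuity); second, two Casselman--Wallach representations with the same module of $K$-finite vectors are canonically topologically isomorphic. I would also freely use that closed $G$-invariant subspaces and Hausdorff $G$-quotients of a Casselman--Wallach representation inherit the Casselman--Wallach property, and that a finite-length $(\g,K)$-submodule of $\sigma_K$ (for any $\sigma\in\Rep_G^{\mathrm{CW}}$) has as its unique smooth moderate-growth globalization inside $\sigma$ its own closure.

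For part (2), take $f\colon\pi\to\sigma$ in $\Rep_G^{\mathrm{CW}}$ and restrict to $K$-finite vectors to obtain a $(\g,K)$-homomorphism $f_K\colon\pi_K\to\sigma_K$. Set $N:=f_K(\pi_K)$, a finite-length $(\g,K)$-submodule of $\sigma_K$. Its closure $M:=\overline{N}\subset\sigma$ is then a closed $G$-stable Casselman--Wallach subspace with $M_K=N$. Continuity of $f$ together with density of $\pi_K$ in $\pi$ gives $f(\pi)\subset M$; for the reverse inclusion, observe that $\ker f$ is closed and $G$-stable, hence Casselman--Wallach, with $(\ker f)_K=\ker f_K$, so the quotient $\pi/\ker f$ is Casselman--Wallach with $K$-finite vectors $\pi_K/\ker f_K\cong N=M_K$. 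The uniqueness part of the globalization theorem then identifies $\pi/\ker f$ with $M$ as Casselman--Wallach representations, so $f$ factors through a topological surjection $\pi\twoheadrightarrow M$. Therefore $f(\pi)=M$ is closed in $\sigma$.

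Part (1) follows formally: finite-length $(\g,K)$-modules manifestly form an abelian category, and the globalization equivalence transports the abelian structure to $\Rep_G^{\mathrm{CW}}$. Concretely, kernels and cokernels of a morphism $f$ are realized as the closed subspace $\ker f$ and the Hausdorff quotient $\sigma/f(\pi)$ respectively, both Casselman--Wallach by part (2) and the closure of $\Rep_G^{\mathrm{CW}}$ under closed subspaces and Hausdorff quotients, and the first isomorphism theorem in $\Rep_G^{\mathrm{CW}}$ is inherited from its Harish--Chandra counterpart. The main obstacle is entirely topological: one must promote the algebraic identity $(f(\pi))_K=f_K(\pi_K)$ to the topological equality $f(\pi)=M$, i.e.\ one must know that a surjection of finite-length Harish--Chandra modules globalizes to a \emph{topological} quotient map of Casselman--Wallach representations. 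This is precisely the non-formal content of the automatic-continuity/uniqueness portion of the Casselman--Wallach globalization theorem, which is the only deep input in the argument.
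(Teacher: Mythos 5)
The paper offers no proof of this lemma; it simply cites \cite[Corollary 2.2.5]{AGS15Derivative}. Your argument is correct and is the standard one found in that reference and in the literature generally: reduce to finite-length $(\g,K)$-modules via the Casselman--Wallach globalization equivalence, use the automatic-continuity/uniqueness part to identify $\pi/\ker f$ with the closure of $f_K(\pi_K)$, and deduce closedness of the image; abelian-ness then follows by transporting the abelian structure across the equivalence, with kernels and cokernels given by the closed subspace $\ker f$ and the Hausdorff quotient $\sigma/f(\pi)$. You correctly isolate the one non-formal input---that a surjection of Harish--Chandra modules globalizes to a topological quotient map---which is precisely Casselman's automatic continuity theorem; the remaining small steps you gloss over (e.g.\ that $(\overline{N})_K=N$ for a $(\g,K)$-submodule $N\subseteq\sigma_K$, which follows by projecting to each $K$-isotypic component, and that $(\pi/\ker f)_K=\pi_K/\ker f_K$, which follows by the same $K$-type projection trick) are all routine.
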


Every Casselman--Wallach representation is in fact a nuclear \fre space, NF-space for short. See \cite[Appendix A]{Casselman2000Bruhat} for more details about NF-spaces.
If $W$ is an NF-space, and $V \subseteq W$ is a closed subspace, then $V$ and $W/V$ are both NF-spaces. Additionally, all surjective morphisms between \fre spaces are open.
\par
For two topological vector spaces $V$ and $W$, 
write $V\otimeshat W$ for their completed projective tensor product.
If $V$ and $W$ are both NF-spaces, so is $V \otimeshat W$.
Furthermore, for a fixed NF-space $V$, the functor $W\mapsto V\otimeshat W$ is exact on the category of NF-spaces.

The following lemma plays an important role in our reduction step.

\begin{lemt}[Casselman's subrepresentation theorem, {\cite[Subsection 3.8]{Wallach1988RRGI}}]
    Let $\pi$ be an irreducible representation of $G$. 
    Write $P_{\min}=L\ltimes N$ for a minimal parabolic subgroup of $G$.
    Then there exists an irreducible finite-dimensional representation $V$ of $L$, such that $\pi$ can be imbedded into $\Ind^G_{P_{\min}}(V)$.
\end{lemt}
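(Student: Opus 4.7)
The plan is to pass from the topological setting to the algebraic one via the Casselman--Wallach globalization theorem, and then to extract an irreducible quotient of the Jacquet module. Let $V$ denote the underlying Harish--Chandra $(\g, K)$-module of $K$-finite vectors in $\pi$; by the equivalence of categories it suffices to produce a nonzero $(\g, K)$-module map $V \to \Ind_{P_{\min}}^G(W)^{K\text{-fin}}$ for some irreducible finite-dimensional representation $W$ of $L$, since globalization is exact and any nonzero such map is automatically injective by irreducibility of $V$. Closedness of the image of the resulting embedding of Casselman--Wallach representations is then automatic by Lemma~\ref{Lem: CW closed image}.

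The central object I would introduce is Casselman's Jacquet module $J_{P_{\min}}(V) := \widehat{V} / \n \widehat{V}$, where $\widehat{V}$ denotes the formal $\n$-adic completion of $V$. This carries a natural $(\l, L \cap K)$-action, and the canonical projection $V \to J_{P_{\min}}(V)$ is $(\p_{\min}, L \cap K)$-equivariant once the target is regarded as a $\p_{\min}$-module with trivial $\n$-action. The essential inputs to invoke (both developed in \cite{Wallach1988RRGI}) are
\begin{enumerate}[label=(\roman*)]
\item $J_{P_{\min}}(V) \neq 0$ whenever $V \neq 0$, and
\item $J_{P_{\min}}(V)$ is admissible and finitely generated as an $(\l, L \cap K)$-module.
\end{enumerate}
Granted these, any irreducible $(\l, L \cap K)$-quotient $W$ of $J_{P_{\min}}(V)$ is automatically finite-dimensional by (ii), and the composite $V \to J_{P_{\min}}(V) \twoheadrightarrow W$ is a nonzero $(\p_{\min}, L \cap K)$-equivariant map. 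Frobenius reciprocity then converts this into the desired nonzero $(\g, K)$-module map into $\Ind_{P_{\min}}^G(W)^{K\text{-fin}}$, which is an embedding.

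The main obstacle is assertion (i), which is the deepest ingredient: it is essentially equivalent to Casselman's theorem on the asymptotic expansion of matrix coefficients along the split component $A \subset L$ and relies on a careful analysis of leading exponents. Assertion (ii) similarly forces one to work with the formal completion $\widehat{V}$ rather than the naive coinvariants $V/\n V$, which for groups of real rank at least two may fail to be admissible or even finite-dimensional. Since the paper invokes this lemma as a black box from Wallach's treatise, its role here is purely as a reduction device, allowing the Schwartz-homological analysis of theta lifts to be reduced to the case of principal series representations treated in Theorem~\ref{Thm: principal series CW}.
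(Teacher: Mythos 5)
The paper does not prove this lemma; it is cited as a black box from Wallach's treatise, so there is no in-paper proof to compare against. Your sketch is a correct outline of the standard argument and is essentially the approach taken in the cited reference: pass to $(\g,K)$-modules via the Casselman--Wallach equivalence, show the Jacquet module along the minimal parabolic is nonzero and admits a finite-dimensional irreducible $L$-quotient $W$, and apply Frobenius reciprocity $\Hom_{(\g,K)}(V,\Ind_{P_{\min}}^{G}(W)^{K\text{-fin}}) \cong \Hom_{(\p_{\min},L\cap K)}(V,W)$ to produce a nonzero (hence injective, by irreducibility) map. Your identification of (i) $J_{P_{\min}}(V)\neq 0$ as the deep ingredient, resting on Casselman's asymptotic analysis of matrix coefficients, is exactly right.

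One technical imprecision worth flagging: you define $J_{P_{\min}}(V):=\widehat{V}/\n\widehat{V}$, but since completion commutes with finite quotients this is canonically isomorphic to the naive coinvariants $V/\n V$; so your definition does not actually capture the completion, and your later remark that "the formal completion is forced rather than the naive coinvariants" is inconsistent with it. More to the point, for the subrepresentation theorem along a \emph{minimal} parabolic the completion is in fact unnecessary: Casselman proved that $V/\n^{k}V$ is finite-dimensional for every $k$ when $\n$ is the nilradical of a minimal parabolic and $V$ is a Harish--Chandra module, so $V/\n V$ itself is a nonzero finite-dimensional $L$-module and one can take $W$ to be any irreducible quotient directly. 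The full $\n$-adic completion $\widehat{V}$ becomes essential only for Jacquet modules along non-minimal parabolics, where $V/\n V$ may fail to be admissible. This does not affect the validity of your argument, since every step goes through with $V/\n V$ in place of your $J_{P_{\min}}(V)$, but the rank-dependent caveat in your last paragraph is misplaced for the minimal-parabolic situation at hand.
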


\subsection{Theta correspondence}\label{Subsec: ThetaCorrespondence}
In this section, we briefly review the theory of theta lifting and fix notation. For more details, the reader may consult \cite{MVW1987Theta, Adams2007ThetaR, GKT2025ThetaBook}.
Let $F$ be an archimedean local field. Let $\BW$ be a finite-dimensional symplectic space over $F$. Fix a non-trivial unitary character $\psi: F \to \BC^{\times}$.

We first recall basic facts about reductive dual pairs in $\Sp(\BW)$.
Our conventions follow \cite[Chapter~10.5]{GKT2025ThetaBook} and \cite{Adams2007ThetaR}.
Let $(G',H')$ be a reductive dual pair in $\Sp(\mathbb{W})$. Then there is an orthogonal decomposition $\BW = \oplus_{i=1}^k \BW_i$, such that
\[
(G',H')=(G'_{1},H'_{1})\times\cdots\times(G'_{k},H'_{k}),
\]
where each $(G'_{i},H'_{i})$ is an irreducible reductive dual pair in $\Sp(\mathbb{W}_{i})$.
The following tables present a complete list of irreducible reductive dual pairs, together with the associated division algebra and involution $(\RD, \kappa)$ in the Type~I case.
Note that if $(G',H')$ is a reductive dual pair in $\Sp(\BW)$, then so is $(H',G')$.

\begin{table}[ht]
\centering
\label{tab:Rpairs}
\begin{tabular}{c|c c c c}
  & $G'$ & $H'$ & $\mathrm{Sp}(\BW)$ & $(\RD, \kappa)$\\ \hline
Type I & $\mathrm{Sp}(2n,\mathbb{R})$ & $\mathrm{O}(p,q,\mathbb{R})$ & $\mathrm{Sp}(2n(p+q),\mathbb{R})$ & $(\BR, 1)$ \\
  & $\mathrm{U}(r,s)$ & $\mathrm{U}(p,q)$ & $\mathrm{Sp}(2(p+q)(r+s),\mathbb{R})$ & $(\BC, \Bar{\cdot})$ \\
  & $\mathrm{O}^{*}(2n)$ & $\mathrm{Sp}(p,q)$ & $\mathrm{Sp}(4n(p+q),\mathbb{R})$ & $(\BH, \Bar{\cdot})$ \\ \hline
Type II & $\mathrm{GL}(n,\mathbb{R})$ & $\mathrm{GL}(m,\mathbb{R})$ & $\mathrm{Sp}(2nm,\mathbb{R})$ &\\
  & $\mathrm{GL}(n,\mathbb{H})$ & $\mathrm{GL}(m,\mathbb{H})$ & $\mathrm{Sp}(8nm,\mathbb{R})$ &\\
\end{tabular}
\caption*{Irreducible reductive dual pairs over $\mathbb{R}$}
\end{table}

\begin{table}[ht]
\centering
\label{tab:Cpairs}
\begin{tabular}{c|c c c c}
  & $G'$ & $H'$ & $\mathrm{Sp}(W)$ & $(\RD, \kappa)$ \\ \hline
Type I & $\mathrm{Sp}(2n,\mathbb{C})$ & $\mathrm{O}(m,\mathbb{C})$ & $\mathrm{Sp}(2nm,\mathbb{C})$ & $(\BC, 1)$ \\ \hline
Type II & $\mathrm{GL}(n,\mathbb{C})$ & $\mathrm{GL}(m,\mathbb{C})$ & $\mathrm{Sp}(2nm,\mathbb{C})$ & \\
\end{tabular}
\caption*{Irreducible reductive dual pairs over $\mathbb{C}$}
\end{table}

Let
\[
G := G_1 \times \cdots \times G_k, \qquad H := H_1 \times \cdots \times H_k,
\]
where for each $i$ the pair $(G_i,H_i)$ is obtained from $(G'_i,H'_i)$ by the following rule: if neither factor is an odd orthogonal group then $G_i:=G'_i$ and $H_i:=H'_i$, while if one factor is an odd orthogonal group then the other factor is replaced by its metaplectic double cover.
Denote by $\Mp(\BW)^{\BC^1}$ the $\BC^1$-metaplectic cover of $\Sp(\BW)$, where $\BC^1 := \{z \in \BC \mid |z| = 1\}$.
There exists a splitting homomorphism 
\[
s : G \times H \longrightarrow \Mp(\BW)^{\BC^1}.
\]
Let $\Omega_{\psi}$ be the Weil representation of $\Mp(\BW)^{\BC^1}$ with respect to $\psi$. 
Via the above splitting homomorphism $s$, we regard $\Omega_{\psi}$ as a representation of $G \times H$.

Suppose $\pi$ is an irreducible representation of $G$, define the full theta lift of $\pi$ to be
\[
\Theta(\pi):=(\Omega_{\psi} \otimeshat\pi^{\vee})_G/\overline{\{0\}}.
\]
By Howe's pioneering work \cite{Howe1979thetaseries} \cite{Howe1989Remarks} \cite{Howe1989TranscendingClassicalInvTheory}, $\Theta(\pi)$ is of finite length as $H$ representation, and hence belongs to $\Rep_{H}^{\mathrm{CW}}$. 
Moreover, when $\Theta(\pi)$ is non-zero, it admits a unique irreducible quotient.
The analogous statement over $p$-adic fields, formulated by Howe as the Howe duality conjecture, was subsequently established in full generality by Gan--Takeda \cite{GT2016HoweDuality} and Gan--Sun \cite{GS2017HoweDuality}.

\section{Schwartz homology and reduction step}\label{Sec: Schwartz homology}

In this section, we review necessary facts on Schwartz analysis and Schwartz homology.
In Subsection \ref{Subsec: reduction step}, we show that the proof of Theorem~\ref{ThmA} can be reduced to that of Theorem~\ref{Thm: principal series CW}.

\subsection{Schwartz induction}
We work with induced representations in the Schwartz setting.
Let $G$ be an almost linear Nash group, $H \subseteq G$ a Nash subgroup, and $V$ a smooth Fréchet representation of $H$ of moderate growth.
We refer to \cite[Subsection 6.2]{CHEN2021108817} for the definition of the Schwartz induced representation $\ind_H^G V$.
When $H\backslash G$ is compact, it coincides with the usual smooth induced representation.

For a tempered bundle $\CE$ over a Nash manifold $X$, we write $\Gamma^{\CS}(X, \CE)$ for the space of Schwartz sections of $\CE$ over $X$ in the sense of \cite[Definition 6.1]{CHEN2021108817}.
The Schwartz induction $\ind_{H}^{G}V$ admits the following geometric interpretation. 
Let $\mathcal{V}= G\times^{H} V$ be the associated homogeneous bundle. It is a (right) $G$-tempered bundle over $H \backslash G$; see \cite[Subsection 3.3]{CHEN2021108817} and \cite[Subsection 2.1]{Xue20Bessel}. 
By \cite[Proposition 6.7]{CHEN2021108817}, we have
\begin{equation}\label{Eq: geometric of schwartz ind}
    \operatorname{ind}_{H}^{G}V \cong \Gamma^{\mathcal{S}}(H\backslash G,\mathcal{V}).
\end{equation}

To study the filtration for the Type II dual pairs, we need the following generalization of equality~(\ref{Eq: geometric of schwartz ind}).

\begin{lemt}\label{Lem: Schwartz Fibre bundle}
    Let $G$ be an almost linear Nash group and $H \subseteq G$ a closed Nash subgroup. Let $X'$ be a Nash manifold with a right $H$-action. We define an $H$-action on $X' \times G$ by 
    $$
    h \cdot (x', g) =(x'h, h^{-1}g).
    $$
    Let 
    $$X :=  X'\times_H G$$
    be the quotient of $X' \times G$ by this $H$-action,
    which admits a right $G$-action.
    Let $\CE$ be a (right) $G$-tempered bundle over $X$. Denote by $\pi$ the natural action of $G$ on $\Gamma^{\CS}(X, \CE)$, and $\rho$ the action of $H$ on $\Gamma^{\CS}(X', \CE|_{X'})$, then
    $$
    \pi \cong \ind^G_H \rho.
    $$
\end{lemt}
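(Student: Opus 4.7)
The plan is to realise $X$ as a $G$-equivariant Nash fibre bundle over $H\backslash G$ with typical fibre $X'$, and then to identify Schwartz sections on $X$ with Schwartz sections on $H\backslash G$ of a suitable pushforward bundle, thereby reducing to the known geometric description of Schwartz induction in Equation~(\ref{Eq: geometric of schwartz ind}). The first step is to introduce the projection
\[
p\colon X\longrightarrow H\backslash G,\qquad [(x',g)]\mapsto Hg,
\]
which is visibly $G$-equivariant and well defined. Inspecting the $H$-action $h\cdot(x',g)=(x'h,h^{-1}g)$ shows that the fibre $p^{-1}(He)$ is canonically identified with $X'$ via $x'\mapsto[(x',e)]$, in an $H$-equivariant way that is compatible with the restriction of $\CE$. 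In particular, restriction to this fibre defines a continuous $H$-equivariant map
\[
\mathrm{res}\colon \Gamma^{\CS}(X,\CE)\longrightarrow \Gamma^{\CS}(X',\CE|_{X'})=\rho.
\]

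Next I would build the comparison map $\Psi\colon \Gamma^{\CS}(X,\CE)\to \ind_H^G\rho$ by setting $\Psi(f)(g):=\mathrm{res}(\pi(g)f)$. The $H$-equivariance of $\mathrm{res}$ and the $G$-equivariance of $\CE$ together imply that $\Psi(f)$ transforms correctly under $H$, so $\Psi$ lands in $\ind_H^G\rho$ and is tautologically $G$-equivariant. To prove $\Psi$ is a topological isomorphism, I would argue locally. The principal $H$-bundle $G\to H\backslash G$ is Nash-locally trivial, which yields Nash-local trivialisations $p^{-1}(U)\cong U\times X'$ for Nash open $U\subseteq H\backslash G$. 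Over each such chart a Schwartz--Fubini decomposition in the sense of the completed projective tensor product identifies $\Gamma^{\CS}(p^{-1}(U),\CE)$ with $\Gamma^{\CS}(U,\rho)$. Gluing via a Nash partition of unity on $H\backslash G$, as allowed by the formalism of~\cite{CHEN2021108817}, upgrades this to a global isomorphism
\[
\Gamma^{\CS}(X,\CE)\cong \Gamma^{\CS}\bigl(H\backslash G,\,p_*\CE\bigr),
\]
where $p_*\CE$ is the pushforward tempered bundle with fibre $\rho$ over the identity coset. The $G$-equivariance of $p$ then identifies $p_*\CE$ with the homogeneous bundle $G\times^H\rho$, and Equation~(\ref{Eq: geometric of schwartz ind}) delivers $\ind_H^G\rho$ on the nose.

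The main obstacle I anticipate is the Schwartz--Fubini step together with the subsequent gluing: both are morally standard, but the target $\rho$ is typically an infinite-dimensional nuclear \fre space rather than a finite-dimensional fibre, so one must verify that the Schwartz topology on $X$ matches the $\rho$-valued Schwartz topology on $H\backslash G$ through every trivialisation and under the transition maps of $p$. A subsidiary point is to confirm that $\rho$ itself is a smooth \fre representation of $H$ of moderate growth, so that $\ind_H^G\rho$ is defined in the first place; this should follow from the tempered-bundle hypothesis on $\CE$ together with the stability of Schwartz section spaces under smooth actions on tempered bundles, as developed in~\cite{CHEN2021108817}.
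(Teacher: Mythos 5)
Your proposal is correct but takes a noticeably more elaborate route than the paper's own proof. The paper simply writes down both maps explicitly: the map $\phi\mapsto(g\mapsto(\pi(g)\phi)|_{X'})$ (which is your $\Psi$) and its inverse $f\mapsto(x\mapsto r_g(f(g)(x')))$ where $x=(x',g)$, and then asserts that a direct check verifies well-definedness and mutual inversity. You, instead, define only $\Psi$ and prove it is a topological isomorphism by a local-to-global argument: trivialising the principal $H$-bundle $G\to H\backslash G$, applying a Schwartz--Fubini decomposition over each Nash-open chart, gluing via a Nash partition of unity, identifying the result with $\Gamma^{\CS}(H\backslash G, p_*\CE)$, and finally invoking Equation~(\ref{Eq: geometric of schwartz ind}). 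Your approach is more systematic and makes the underlying bundle geometry visible, but it is also heavier: it requires Nash local triviality of $G\to H\backslash G$, a Schwartz--Fubini theorem with an infinite-dimensional NF-fibre, the partition-of-unity gluing lemma for Schwartz sections, and the definedness of the pushforward tempered bundle $p_*\CE$. These are all available in the framework of \cite{CHEN2021108817}, but the paper's direct construction of the inverse map bypasses all of them, and in particular does not need to discuss local trivialisations or pushforward bundles at all. The two arguments buy different things: yours would generalise more easily to situations where an explicit inverse is harder to guess, while the paper's is shorter and self-contained. One point you flag but do not fully resolve --- that the Schwartz topology on $X$ agrees with the $\rho$-valued Schwartz topology on $H\backslash G$ through the trivialisations --- is indeed the only genuinely nontrivial verification in your route; in the paper's route the analogous check is that the inverse map is continuous, which reduces to the continuity of the restriction-and-translation construction and is closer to being immediate from the tempered-bundle axioms.
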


\begin{proof}
    We construct the isomorphism explicitly as follows:
    $$
    \begin{array}{rcl}
    \pi &\longrightarrow& \ind^G_H \rho\\
    \phi &\longmapsto&
    (g \mapsto (\pi(g)\phi)|_{X'})
    \end{array},
    $$

    $$
    \begin{array}{rcl}
    \ind^G_H \rho &\longrightarrow& \pi\\
    f &\longmapsto&
    (x \mapsto r_g(f(g)(x')))
    \end{array},
    $$
    where $x = (x', g)$, and $r_g$ denotes the action of $g \in G$ on $\CE$. 
    One can check that the second map is well-defined and that these two maps are inverses of each other.
\end{proof}

\subsection{Schwartz homology}
The theory of Schwartz homology is developed in \cite{CHEN2021108817}. In this subsection, we introduce the necessary tools that will be used in our subsequent proof. We refer the readers to \cite{CHEN2021108817} for more details.

Let $G$ be an almost linear Nash group. Denote by $\CS\mathrm{mod}_G$ the category of smooth \fre representations of $G$ of moderate growth.
For $V \in \CS\mathrm{mod}_G$, consider the space of $G$-coinvariants 
$$V_G := V/(\sum_{g \in G}(g-1)\cdot V),$$
which is given the quotient topology and becomes a locally convex (not necessarily Hausdorff) topological vector space.
In \cite{CHEN2021108817}, the Schwartz homology groups $\oH^\CS_i(G,-)$ are introduced so that
$$\oH^\CS_0(G,V)\cong V_G,$$
and each $\oH^\CS_i(G,V)$ is naturally a locally convex (not necessarily Hausdorff) topological vector space. 
This homology theory possesses the following favorable properties.

\begin{prpt}[Shapiro's Lemma, {\cite[Theorem 7.5]{CHEN2021108817}}]\label{Prop: Shapiro Lem}
    Let $H$ be a Nash subgroup of $G$ and $V \in \CS\mathrm{mod}_H$.  Then for all $i\in \BZ_{\geq 0}$, there is an identification of topological vector spaces
$$
\oH_{i}^\CS(G,  (\ind_H^G (V \otimes \delta_H))\otimes \delta_G^{-1}) \cong \oH_{i}^\CS(H, V).
$$
Here, $\delta_{G}$ (resp. $\delta_{H}$) denotes the modular character of $G$ (resp. $H$).
\end{prpt}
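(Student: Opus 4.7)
The plan is to prove the Shapiro-type identification in two stages: first establish the degree-zero case by a direct geometric computation, then bootstrap to higher homology via an adjunction / projective resolution argument.

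For the case $i=0$, the statement reduces to the natural identification
\[
\bigl((\ind_H^G(V\otimes\delta_H))\otimes\delta_G^{-1}\bigr)_G \;\cong\; V_H.
\]
Using the geometric realization $\ind_H^G(V\otimes\delta_H)\cong \Gamma^{\CS}(H\backslash G,\CV)$ recorded in equation (\ref{Eq: geometric of schwartz ind}), I would construct a continuous map
\[
V\longrightarrow \bigl((\ind_H^G(V\otimes\delta_H))\otimes\delta_G^{-1}\bigr)_G
\]
by sending a vector $v$ to the class of $\phi\otimes v$ for a suitably normalized Schwartz section $\phi$ supported near the identity coset of $H\backslash G$, check that it factors through $V_H$, and invert it by an integration-over-fibers pairing. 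The modular characters $\delta_H$ and $\delta_G^{-1}$ are inserted precisely so that this pairing is well-defined and $H$-equivariant.

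For arbitrary $i\ge 0$, the key observation is that the functor
\[
F := \bigl(\ind_H^G(-\otimes\delta_H)\bigr)\otimes\delta_G^{-1}\;:\;\CS\mathrm{mod}_H\longrightarrow \CS\mathrm{mod}_G
\]
is exact (Schwartz induction is exact on NF-spaces, and twisting by a one-dimensional character is trivially exact) and, by the Schwartz-setting Frobenius reciprocity, is left adjoint to the exact restriction functor $\Res_H^G$. Consequently $F$ carries projective objects to projective objects in the sense relevant to Schwartz homology (as developed in \cite{CHEN2021108817}). Choosing a projective resolution $P_\bullet\to V$ in $\CS\mathrm{mod}_H$, the complex $F(P_\bullet)\to F(V)$ is then a projective resolution of $F(V)$ in $\CS\mathrm{mod}_G$. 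Taking $G$-coinvariants term by term and invoking the case $i=0$ yields an identification of chain complexes
\[
F(P_\bullet)_G \;\cong\; (P_\bullet)_H,
\]
and passing to homology gives the desired topological isomorphism for every $i\ge 0$.

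The principal obstacle lies in Step~2's foundational inputs: rigorously verifying the exactness of $F$, establishing the precise form of Frobenius adjunction in the smooth moderate-growth category, and ensuring that the notion of ``projective resolution'' used is compatible with the derived-functor definition of Schwartz homology. All of this must be carried out within the NF-space framework, where routine abelian-categorical manipulations have to be replaced by careful topological analogues. Once these ingredients are in place, the argument is a purely formal homological computation.
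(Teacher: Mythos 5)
The paper does not prove this proposition; it is imported verbatim from Chen--Sun \cite[Theorem 7.5]{CHEN2021108817}, so there is no internal proof to compare against. What I can do is assess your sketch on its own terms.

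Your two-step strategy --- degree-zero case by a geometric/integration argument, then bootstrap via a projective-resolution formalism once the induction functor is known to preserve projectives --- is the standard way to prove Shapiro-type results and is essentially the architecture one expects. That said, several of the foundational inputs you list as ``obstacles'' are precisely the content of the theorem, not routine prerequisites. Three points deserve particular attention. First, your adjunction $\Hom_G(F(V),W)\cong\Hom_H(V,\Res_H^G W)$ with $F=(\ind_H^G(-\otimes\delta_H))\otimes\delta_G^{-1}$, even if correctly normalized, gives you at $i=0$ an isomorphism of spaces of \emph{invariant continuous linear functionals} (take $W=\BC$), i.e.\ of the continuous duals of the coinvariant spaces. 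To upgrade this to a topological isomorphism $F(V)_G\cong V_H$ of the coinvariant spaces themselves, one needs Hausdorffness and some reflexivity, which is not automatic for arbitrary $V\in\CS\mathrm{mod}_H$; the clean way around this is to prove the $i=0$ case directly at the level of sections (integration over fibers against the natural density, which is where the $\delta_H\otimes\delta_G^{-1}$ twist is forced), and your sketch does gesture at this. Second, Schwartz homology in \cite{CHEN2021108817} is computed from \emph{strong} relative projective resolutions, meaning the chain complex must split continuously after restriction to a maximal compact. It is not enough that $F$ takes relatively projective objects to relatively projective objects; you must also check that $F$ transports a strong resolution to a strong resolution (e.g.\ that the contracting homotopy survives $\ind_H^G$ and the character twist), and this needs the explicit geometric model $\ind_H^G(\cdot)\cong\Gamma^{\CS}(H\backslash G,\cdot)$ rather than pure category theory. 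Third, ``Schwartz induction is exact on NF-spaces'' is correct for closed Nash subgroups but is itself a nontrivial theorem in the Schwartz framework and cannot be taken for granted; it rests on the existence of tempered partitions of unity subordinate to Nash covers. None of these are fatal to your outline, but as written the proof proposal defers exactly the parts that make the statement substantive in the topological category, so the plan should be regarded as a correct high-level roadmap rather than a complete argument.
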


\begin{prpt}[{\cite[Theorem 7.7]{CHEN2021108817}}]\label{Prop: homology comparison}
For every representation $V$ in $\CS\mathrm{mod}_{G}$ and every $i\in \BZ_{\geq 0}$, there is an identification of topological vector spaces
\[
\oH^{\CS}_{i}(G, V) \cong \oH_{i}(\g, K; V).
\]
Here and henceforth, for $V \in \CS\mathrm{mod}_{G}$, $\oH_{*}(\g, K; V)$ denotes the homology groups of the Koszul complex
\[
\cdots \rightarrow (\wedge^{l+1}(\g/\k) \otimes V)_{K}
\rightarrow (\wedge^{l}(\g/\k) \otimes V)_{K}
\rightarrow (\wedge^{l-1}(\g/\k) \otimes V)_{K}
\rightarrow\cdots
\]
\end{prpt}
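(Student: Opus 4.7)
The plan is to identify both sides as values of a single derived functor, by constructing an explicit resolution in the Schwartz category whose $G$-coinvariants recover the Koszul complex. Recall that $\oH^\CS_i(G,-)$ is the $i$-th left derived functor of the (possibly non-Hausdorff) coinvariants functor $V \mapsto V_G$ on $\CS\mathrm{mod}_G$, while $\oH_i(\g, K; V)$ is the $i$-th homology of the Koszul complex, which itself realizes the left derived functors of the $(\g, K)$-coinvariants functor on $(\g, K)$-modules. These derived functors agree once one exhibits, for each $V \in \CS\mathrm{mod}_G$, a resolution in $\CS\mathrm{mod}_G$ whose objects are acyclic for $(-)_G$ and which, after applying $(-)_G$, reproduces the Koszul complex of the statement.

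The natural candidate is a Schwartz-Koszul resolution
\[
\cdots \to \CS(G)\otimeshat\wedge^l(\g/\k)\otimeshat V \to \cdots \to \CS(G)\otimeshat V \to V \to 0,
\]
with $G$ acting by right translation on $\CS(G)$, $K$ acting diagonally on the whole object, and differentials mixing the Chevalley--Eilenberg differential on $\wedge^\bullet(\g/\k)$ with the infinitesimal left $\g$-action on $\CS(G)$ (together with the action on $V$). Acyclicity of $\CS(G)\otimeshat W$ for $(-)_G$, for any finite-dimensional $K$-module $W$, is a direct consequence of Shapiro's lemma (Proposition~\ref{Prop: Shapiro Lem}) applied to the trivial subgroup of $G$, combined with the identification $\ind_{\{e\}}^G W \cong \CS(G)\otimeshat W$. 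Exactness of the complex itself is a Schwartz version of the Poincaré lemma for the relative Chevalley--Eilenberg complex associated with the fibration $G \to K\bs G$, and can be established by writing down a continuous contracting homotopy built from a $K$-invariant connection. Applying $(-)_G$ term by term then collapses each $\CS(G)\otimeshat\wedge^l(\g/\k)\otimeshat V$ to $(\wedge^l(\g/\k)\otimes V)_K$, and a direct check shows that the induced differentials coincide with those of the Koszul complex displayed in the statement.

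The main obstacle is topological rather than algebraic. Every step must be checked at the level of topological vector spaces: one needs the completed projective tensor products to remain exact (which is guaranteed by the NF-space properties recalled in Section~\ref{Sec: preliminary}), the coinvariants to be interpreted as topological quotients that may fail to be Hausdorff, and the $K$-coinvariants arising in the Koszul complex to match the $K$-equivariance built into the Schwartz resolution. Matching these structures, and verifying that the contracting homotopy for the Poincaré lemma is genuinely continuous in the Schwartz topology, constitutes the technical core of the argument; these are precisely the issues carried out in detail in \cite{CHEN2021108817}.
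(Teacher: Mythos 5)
Your approach is essentially the one used in the cited reference \cite{CHEN2021108817}, but there is a concrete mismatch in how $K$ enters. As written, your proposed resolution has terms $\CS(G)\otimeshat\wedge^l(\g/\k)\otimeshat V$, and applying $(-)_G$ to such a term (via Shapiro's lemma for the trivial subgroup, as you invoke) yields $\wedge^l(\g/\k)\otimes V$, not $(\wedge^l(\g/\k)\otimes V)_K$; a diagonal $K$-action on the resolution does not, by itself, convert $G$-coinvariants into $K$-coinvariants, so your complex as described would compute $\oH_*(\g,V)$ rather than $\oH_*(\g,K;V)$. The resolution that actually does the job has terms
\[
P_l := \ind_K^G\bigl(\wedge^l(\g/\k)\otimes V\bigr) \cong \Gamma^\CS\bigl(K\bs G,\ G\times^K(\wedge^l(\g/\k)\otimes V)\bigr),
\]
namely the Schwartz de Rham complex of $K\bs G$ twisted by $V$; these are the $K$-(co)invariants of the objects you wrote down. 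With this correction, acyclicity of each $P_l$ for $(-)_G$ comes from Shapiro's lemma (Proposition~\ref{Prop: Shapiro Lem}) applied to the compact subgroup $K$ (where $\delta_K=1$ and $\delta_G=1$ by unimodularity of a reductive $G$), together with relative projectivity over compact groups (Proposition~\ref{Prop: projective hausdorff}), rather than from Shapiro for the trivial subgroup. One then finds $(P_l)_G\cong \oH^\CS_0(K,\wedge^l(\g/\k)\otimes V)\cong(\wedge^l(\g/\k)\otimes V)_K$, recovering exactly the Koszul term in the statement. The remaining points you mention --- exactness of the augmented complex over $K\bs G\cong\BR^{\dim(\g/\k)}$ via a continuous contracting homotopy in the Schwartz topology, and matching the induced differentials with the Chevalley--Eilenberg ones --- are correct and are indeed where the technical work lies.
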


\begin{prpt}[{\cite[Corollary 7.8]{CHEN2021108817}}]
Every short exact sequence 
$$0\rightarrow V_{1}\rightarrow V_{2}\rightarrow V_{3}\rightarrow 0$$
in the category 
$\CS\mathrm{mod}_{G}$ yields a long exact sequence
\[
\cdots \rightarrow\oH^{\CS}_{i+1}(G, V_{3})\rightarrow\oH^{\CS}_{i}(G, V_{1})\rightarrow\oH^{\CS}_{i}(G, V_{2})\rightarrow \oH^{\CS}_{i}(G, V_{3})\rightarrow\cdots
\]
of (not necessarily Hausdorff)  locally convex topological vector spaces.
\end{prpt}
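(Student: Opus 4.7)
The plan is to invoke Proposition \ref{Prop: homology comparison}, which identifies $\oH^{\CS}_{i}(G, V)$ with the homology of the Koszul complex whose $l$-th term is $(\wedge^{l}(\g/\k)\otimes V)_{K}$. Under this identification, it suffices to establish that (a) a short exact sequence in $\CS\mathrm{mod}_{G}$ induces a short exact sequence of Koszul complexes of topological vector spaces, and (b) the resulting snake-lemma construction yields continuous connecting homomorphisms.

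First I would check (a). Tensoring with the finite-dimensional space $\wedge^{l}(\g/\k)$ is manifestly exact. The substantive step is to show that the $K$-coinvariants functor $W\mapsto W_{K}$ is exact on smooth \fre representations of $K$ of moderate growth. Because $K$ is compact, the Haar integral gives a continuous projection $p_{K}:W\to W^{K}$, $w\mapsto \int_{K}k\cdot w\,dk$, which splits the canonical surjection $W\twoheadrightarrow W_{K}$, continuously identifying $W_{K}$ with $W^{K}$. Exactness of $W\mapsto W^{K}$ (equivalently, $W\mapsto W_{K}$) on $\CS\mathrm{mod}_{K}$ follows at once. Applying the exact functor $(\wedge^{l}(\g/\k)\otimes -)_{K}$ degree by degree then yields
\[
0\rightarrow C_{\bullet}(\g,K;V_{1})\rightarrow C_{\bullet}(\g,K;V_{2})\rightarrow C_{\bullet}(\g,K;V_{3})\rightarrow 0,
\]
a short exact sequence of chain complexes with continuous Koszul differentials.

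Next I would establish (b) by the standard zig-zag construction. Given a class in $\oH_{i+1}(\g,K;V_{3})$ represented by a cycle $z_{3}$, lift $z_{3}$ to $z_{2}\in C_{i+1}(\g,K;V_{2})$, apply the differential to obtain $dz_{2}\in C_{i}(\g,K;V_{2})$, and identify its image with an element in $C_{i}(\g,K;V_{1})$; this defines the connecting homomorphism $\partial$. Continuity of $\partial$ with respect to the quotient topologies on homology follows from openness of the surjection $C_{i+1}(\g,K;V_{2})\twoheadrightarrow C_{i+1}(\g,K;V_{3})$, guaranteed by the open mapping theorem for \fre spaces. Exactness of the resulting long sequence in topological vector spaces is then just the usual diagram chase.

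The only real obstacle is the topological bookkeeping: the homology groups are not assumed Hausdorff, so at every step one must interpret maps with respect to the subquotient topology rather than invoking closed-kernel or closed-image properties. With the continuous averaging splitting in place for compact $K$ and the open mapping theorem available for \fre spaces, no further structure is needed, and the long exact sequence emerges through ordinary homological algebra.
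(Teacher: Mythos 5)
The paper does not prove this statement; it simply cites it as \cite[Corollary 7.8]{CHEN2021108817}, where it is established directly from the resolution-theoretic definition of Schwartz homology (via a relative version of the horseshoe lemma applied to strong relatively projective resolutions). Your route—deduce the long exact sequence from the Koszul-complex comparison of Proposition~\ref{Prop: homology comparison}—is a genuinely different argument, and it can work, but as written it has two soft spots worth naming.

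First, there is a circularity risk you do not address. Proposition~\ref{Prop: homology comparison} is Theorem~7.7 of the same reference, appearing just before the Corollary~7.8 you are trying to prove. You must check that the proof of the Koszul comparison in the source does not itself invoke the long exact sequence (in fact the reference establishes the long exact sequence earlier, in the resolution framework, so no circle arises there—but a proof that silently depends on a theorem proved \emph{after} the result one cites should at least flag the dependency).

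Second, and more substantively, the step ``$K$-coinvariants is exact'' is not as free as you make it sound. The functor used in Proposition~\ref{Prop: homology comparison} is the \emph{algebraic} coinvariant space $W_K = W/\bigl(\sum_{k\in K}(k-1)W\bigr)$, not the Hausdorff quotient. Your averaging map $p_K$ certainly gives a continuous projection onto $W^K$ with $\ker p_K \supseteq \sum_{k}(k-1)W$, and $W^K\cap\sum_k(k-1)W=0$, so $W^K\to W_K$ is injective. But surjectivity requires $\ker p_K \subseteq \sum_k(k-1)W$, i.e.\ that $w-\int_K kw\,dk$ lies in the \emph{algebraic} span of $\{(k-1)v\}$, not merely in its closure. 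This is automatic for finite $K$ but for a compact Lie group it is a real lemma (ultimately using smoothness/moderate growth and the action of $\k$). Without it, the identification $W_K\cong W^K$ as you stated it is only up to closure, which does not give exactness of the algebraic coinvariants and hence does not give a short exact sequence of Koszul complexes on the nose. You should either cite the relevant lemma about $K$-coinvariants of smooth Fréchet representations of compact Lie groups, or replace this step with the reference's own resolution-based argument.

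The topological snake lemma part (b) is fine for complexes of Fréchet spaces with continuous differentials and closed images from the SES; the open mapping theorem does what you want there.
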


It is important in practice to determine the Hausdorffness of the Schwartz homology. We have the following useful propositions.

\begin{prpt}[{\cite[Theorem 5.9, Proposition 5.7]{CHEN2021108817}}]\label{Prop: projective hausdorff}
Let $V$ be a relatively projective representation in $\CS\mathrm{mod}_G$.  Then the coinvariant space $V_G$ is a \fre space.
Moreover, when $G$ is compact, every representation in $\CS\mathrm{mod}_G$ is relatively projective. 
\end{prpt}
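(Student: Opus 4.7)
The plan is to prove both assertions using the standard relatively free objects of $\smod_G$. For any $E\in\smod$, put $\CF(E) := \CS(G)\otimeshat E$ with $G$ acting by left translation on the first factor. A direct check shows that $\CF(-)$ is left adjoint to the forgetful functor $\smod_G \to \smod$, and from this adjunction one sees that each $\CF(E)$ is relatively projective. By a standard argument in relative homological algebra, any relatively projective $V$ is then a direct summand, in $\smod_G$, of some $\CF(E)$.

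The heart of the first claim is the identification
\[
\CF(E)_G \;\cong\; E
\]
as \fre spaces, implemented by the integration map $\int_G : \CS(G)\to\BC$ on the first factor. Granting this, $V_G$ is a direct summand of a \fre space in the category of locally convex Hausdorff spaces, and is therefore itself \fre.

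For the second claim, when $G$ is compact, I would produce the desired lifting property by Haar averaging. Given a surjection $\phi : A \twoheadrightarrow B$ in $\smod_G$ admitting a continuous (non-equivariant) \fre-space section $s : B \to A$, set
\[
\bar s(b) \;:=\; \int_G g\cdot s(g^{-1}\cdot b)\, dg.
\]
By compactness of $G$ together with smoothness and moderate growth of the action, the integral converges in $A$, and $\bar s$ is a continuous $G$-equivariant section of $\phi$. Hence every admissible short exact sequence splits, so every representation in $\smod_G$ is relatively projective.

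The main obstacle I expect lies in the identification $\CF(E)_G \cong E$ in the \fre category: one must verify that the subspace $\span\{(g-1)\cdot v : g\in G,\, v\in\CF(E)\}$ is already closed, so that the quotient topology is automatically Hausdorff and no closure is needed. The clean route is to exhibit integration as a continuous retraction $\CF(E)\to E$, and then show its kernel coincides with the above span by a convolution/approximate-identity argument on $\CS(G)$, using translation invariance of the Haar measure to express any element of $\ker(\int_G)$ as a limit of finite sums of the form $(g-1)\cdot v$.
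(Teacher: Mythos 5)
The paper does not prove this proposition; it cites it directly from Chen--Sun \cite[Theorem 5.9, Proposition 5.7]{CHEN2021108817}. So there is no in-paper proof to compare against, but your proposal follows the standard route that the cited reference takes: reduce to the relatively free objects $\CF(E)=\CS(G)\otimeshat E$, compute $\CF(E)_G\cong E$, and deduce Hausdorffness for direct summands; for compact $G$, split by Haar averaging. Both the reduction-to-$\CF(E)$ step and the averaging argument are correct as written.

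The one place where your sketch has a real gap is the identification $\CF(E)_G\cong E$ as \fre spaces. You propose to show that $\ker(\int_G)$ equals $\sum_g(g-1)\CF(E)$ by expressing elements of $\ker(\int_G)$ ``as a limit of finite sums of the form $(g-1)\cdot v$.'' That argument only establishes density of $\sum_g(g-1)\CF(E)$ in $\ker(\int_G)$, which is automatic; what the proposition actually requires is that the \emph{algebraic} span $\sum_g(g-1)\CF(E)$ is already closed, i.e.\ equals $\ker(\int_G)$ on the nose. This is the genuine content of the theorem (recall the paper's convention that $V_G:=V/\sum_g(g-1)V$, not the quotient by the closure). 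The usual way to close the gap is to pass through the Lie algebra: show $\sum_g(g-1)\CS(G,E)=\g\cdot\CS(G,E)$ and then show $\g\cdot\CS(G,E)=\ker(\int_G)$ by explicitly anti-differentiating Schwartz functions with vanishing integral (the one-variable model is: $f\in\CS(\BR)$ with $\int f=0$ implies $F(x)=\int_{-\infty}^x f$ is Schwartz and $f=F'$). Once that is in place, the rest of your argument — complemented subspace of a \fre space is \fre, and the equivariant section $\bar s(b)=\int_G g\cdot s(g^{-1}b)\,dg$ for compact $G$ — goes through as stated.
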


\begin{prpt}[{\cite[Lemma 3.4]{borel2000continuous}}, {\cite[Proposition 1.9]{CHEN2021108817}}]\label{Prop: finite hausdorff}
Let $V \in \CS\mathrm{mod}_G$ and $i \in \BZ_{\geq 0}$.  If $\oH_{i}^\CS(G; V)$ is finite-dimensional, then it is Hausdorff.  
\end{prpt}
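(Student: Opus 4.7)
My plan is to realize the Schwartz homology as the homology of a complex of \fre spaces, and then use the open mapping theorem to show that a finite-codimensional image of a continuous linear map between \fre spaces is automatically closed.

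First I would invoke Proposition \ref{Prop: homology comparison} to identify
\[
\oH^{\CS}_i(G, V) \cong \oH_i(\g, K; V) = \ker(d_i)/\mathrm{im}(d_{i+1}),
\]
where $d_j : C_j \to C_{j-1}$ is the Koszul differential on $C_j := (\wedge^j(\g/\k) \otimes V)_K$. Each $C_j$ is a \fre space: the finite-dimensional factor $\wedge^j(\g/\k)$ tensored with the \fre representation $V$ lies in $\smod_K$, and since $K$ is compact, Proposition \ref{Prop: projective hausdorff} implies that the $K$-coinvariants $C_j$ are \fre. Hence $Z := \ker(d_i)$ is a closed subspace of $C_i$, therefore itself \fre, and $B := \mathrm{im}(d_{i+1}) \subseteq Z$ is a linear (\emph{a priori} not closed) subspace.

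The maximal Hausdorff quotient of the topological vector space $Z/B$ is $Z/\overline{B}$, so the Hausdorffness of $\oH^{\CS}_i(G,V) = Z/B$ is equivalent to $B = \overline{B}$, that is, to $B$ being closed in $Z$. Assuming $Z/B$ is finite-dimensional of dimension $n$, I would pick $v_1,\dots,v_n \in Z$ whose images form a basis of $Z/B$, set $F := \span(v_1,\dots,v_n)$, and consider
\[
\phi : \bigl(C_{i+1}/\ker(d_{i+1})\bigr) \oplus F \longrightarrow Z, \qquad (\bar x, c) \longmapsto d_{i+1}(x) + c.
\]
Since $\ker(d_{i+1})$ is closed in $C_{i+1}$, the source of $\phi$ is \fre. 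The map $\phi$ is continuous, surjective (by choice of $F$), and injective (as $B \cap F = 0$). By the open mapping theorem it is a topological isomorphism, so $B = \phi\bigl(\bigl(C_{i+1}/\ker(d_{i+1})\bigr) \oplus \{0\}\bigr)$ is closed in $Z$, as desired.

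The only step that really requires care is ensuring that the Koszul complex consists of \fre spaces; this is precisely where the compactness of $K$, via relative projectivity (Proposition \ref{Prop: projective hausdorff}), plays the essential role. Once this is in place, the proof is a standard application of the open mapping theorem, and the finite-dimensionality hypothesis is used only to produce the finite-dimensional complement $F$ that turns $\phi$ into a bijection.
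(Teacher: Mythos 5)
Your proof is correct and is essentially the argument behind the cited references: realize $\oH^{\CS}_i(G,V)$ as a subquotient $Z/B$ of a \fre space (here via the Koszul complex, whose terms are \fre precisely because $K$ is compact), and then observe that a finite-codimensional image of a continuous linear map between \fre spaces is automatically closed, by the open mapping theorem applied to the augmented map $\phi$. The paper itself offers no proof and simply cites Borel--Wallach and Chen--Sun; your argument recovers exactly the functional-analytic lemma those references supply.
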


\begin{prpt}[\kun formula, {\cite[Proposition 3.8]{geng2025Shalika}}]
    Let $G_1$ and $G_2$ be two almost linear Nash groups.
    Assume $V_i \in \CS\mathrm{mod}_{G_i}$, $i = 1,2$, are both NF-spaces. If for all $j \in \BZ_{\geq 0}$, $\oH^{\CS}_{j}(G_i, V_i)$ are NF-spaces, then there is an isomorphism
    $$
    \oH^{\CS}_m(G_1 \times G_2, V_1 \otimeshat V_2) \cong \bigoplus_{p+q=m} \oH^{\CS}_p(G_1, V_1) \otimeshat \oH^{\CS}_q(G_2, V_2), \quad \forall \ m \in \BZ_{\geq 0},
    $$
    as topological vector spaces. In particular, $\oH^{\CS}_m(G_1 \times G_2, V_1 \otimeshat V_2)$ is an NF-space. 
\end{prpt}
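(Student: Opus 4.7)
The plan is to reduce the problem via Proposition~\ref{Prop: homology comparison} to a statement about chain complexes of NF-spaces, and then carry out the classical K\"unneth argument inside the NF category, where the completed projective tensor product is exact.

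I first identify $\oH^{\CS}_{\bullet}(G_i, V_i)$ with the homology of the Koszul complex $C^{(i)}_\bullet$, whose $p$th term is $(\wedge^p(\g_i/\k_i) \otimes V_i)_{K_i}$. Each $C^{(i)}_p$ is an NF-space: the finite-dimensional tensor $\wedge^p(\g_i/\k_i) \otimes V_i$ is NF, and taking coinvariants under the compact group $K_i$ preserves the NF property in view of Proposition~\ref{Prop: projective hausdorff} (quotients of NF-spaces by closed subspaces are NF). Using $\g = \g_1 \oplus \g_2$, $\k = \k_1 \oplus \k_2$ together with exactness of $\otimeshat$ on NF-spaces, the Koszul complex of $(G_1 \times G_2, V_1 \otimeshat V_2)$ is naturally identified with the total complex of the double complex $\{C^{(1)}_p \otimeshat C^{(2)}_q\}_{p,q}$ equipped with the usual signed Koszul differential. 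It therefore remains to compute the total homology of a double complex in the NF category.

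I now carry out the splitting argument. Set $Z_p := \ker d^{(1)}_p$ and $B_p := \Im d^{(1)}_{p+1}$. The subspace $Z_p$ is closed in $C^{(1)}_p$ as the preimage of $\{0\}$ in a Hausdorff space, while the NF hypothesis on $H^{(1)}_p := \oH^{\CS}_p(G_1, V_1)$ forces $B_p$ to be closed in $Z_p$. Both are therefore closed NF subspaces, and the open mapping theorem yields strict short exact sequences
\[
0 \to Z_p \to C^{(1)}_p \to B_{p-1} \to 0, \qquad 0 \to B_p \to Z_p \to H^{(1)}_p \to 0
\]
of NF-spaces. Tensoring with the complex $C^{(2)}_\bullet$ via $\otimeshat$ preserves exactness; since the differentials on $Z_\bullet$ and $B_\bullet$ vanish, the total homology of $Z_\bullet \otimeshat C^{(2)}_\bullet$ in degree $n$ equals $\bigoplus_{p+q=n} Z_p \otimeshat H^{(2)}_q$, and analogously for $B_\bullet \otimeshat C^{(2)}_\bullet$. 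A diagram chase modelled on the algebraic K\"unneth proof then produces a natural continuous bijection
\[
\bigoplus_{p+q=n} H^{(1)}_p \otimeshat H^{(2)}_q \longrightarrow \oH^{\CS}_n(G_1 \times G_2, V_1 \otimeshat V_2),
\]
which is a topological isomorphism by the open mapping theorem, both sides being NF.

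The main obstacle is purely topological: the algebraic K\"unneth argument presupposes exactness of tensor products and closedness of cycle and boundary subspaces, neither of which is automatic in a topological setting. The former is supplied by exactness of $\otimeshat$ on NF-spaces, and the latter is guaranteed by the NF hypothesis on the factorwise homologies, which enforces Hausdorffness and hence closedness at each stage of the diagram chase. Once these ingredients are in place, the classical proof transplants without surprise.
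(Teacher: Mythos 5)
Your argument is essentially correct and follows the natural route for proving a K\"unneth formula in this topological setting; the paper itself defers the proof to the cited reference \cite{geng2025Shalika}, so there is no in-text proof to compare against, but the Koszul-complex reduction via Proposition~\ref{Prop: homology comparison} combined with the cycle/boundary decomposition is exactly the standard strategy one would expect there.

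Two small points worth making explicit. First, for the identification of the Koszul complex of $(G_1\times G_2, V_1\otimeshat V_2)$ with the total complex of $C^{(1)}_\bullet\otimeshat C^{(2)}_\bullet$, you should note that taking $K_1\times K_2$-coinvariants factors through the two factors and commutes with $\otimeshat$ because coinvariants under a compact group are computed by averaging, which realizes the coinvariant space as a direct summand (hence a closed NF subspace); this is the reason each $C^{(i)}_p$ is NF and the double-complex identification holds on the nose. Second, in the final step you appeal to the open mapping theorem to upgrade the continuous bijection to a topological isomorphism, but this presupposes that $\oH^{\CS}_n(G_1\times G_2, V_1\otimeshat V_2)$ is already known to be Hausdorff, which is part of what is being proved. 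The clean way to close this is to invoke Lemma~\ref{Lem: Hausdorff LES} on the long exact sequence arising from
\[
0 \to Z^{(1)}_\bullet\otimeshat C^{(2)}_\bullet \to C^{(1)}_\bullet\otimeshat C^{(2)}_\bullet \to B^{(1)}_{\bullet-1}\otimeshat C^{(2)}_\bullet \to 0:
\]
the two outer homologies are finite direct sums $\bigoplus Z^{(1)}_p\otimeshat H^{(2)}_q$ and $\bigoplus B^{(1)}_{p-1}\otimeshat H^{(2)}_q$, hence NF, and the connecting map is the injection induced by $B^{(1)}_p\hookrightarrow Z^{(1)}_p$ (injectivity and closedness of image both following from exactness of $\otimeshat$ applied to $0\to B^{(1)}_p\to Z^{(1)}_p\to H^{(1)}_p\to 0$). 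Lemma~\ref{Lem: Hausdorff LES} then gives both Hausdorffness of the middle homology and openness of the surjection from $\bigoplus Z^{(1)}_p\otimeshat H^{(2)}_q$, which is what the open mapping theorem step really needs.
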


\begin{prpt}[Hochschild-Serre spectral sequence for nilpotent normal subgroups]
    Let $H = L \ltimes N$ be an almost linear Nash group with $N$ being a nilpotent normal subgroup. Consider $V \in \CS\mathrm{mod}_H$, if for all $j \in \BZ_{\geq 0}$, $\oH^{\CS}_{j}(N, V) \in \CS\mathrm{mod}_L$, then there exists convergent first quadrant spectral sequences:
    $$
    E^2_{p,q} = \oH^{\CS}_{p}(L, \oH^{\CS}_{q}(N, V)) \implies \oH^{\CS}_{p+q}(H ,V).
    $$
    Furthermore, assume $V \in \smod_{G \times H}$,
    if for all $p,q \in \BZ_{\geq 0}$, $\oH^{\CS}_{p}(L, \oH^{\CS}_{q}(N, V))$ are Casselman--Wallach representations of $G$, then so is $\oH^{\CS}_{i}(H ,V)$ for all $i \in \BZ_{\geq 0}$.
\end{prpt}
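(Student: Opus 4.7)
The plan is to reduce the construction to a classical double complex argument via the Koszul complex identification in Proposition~\ref{Prop: homology comparison}. Since $N$ is connected, simply connected, and nilpotent, its maximal compact subgroup is trivial, so a maximal compact $K$ of $H$ sits inside $L$ and is simultaneously a maximal compact of $L$. The Levi decomposition furnishes a $K$-equivariant splitting $\h/\k = \l/\k \oplus \n$, which induces a bigrading
\[
\wedge^n(\h/\k) = \bigoplus_{p+q=n}\wedge^p(\l/\k)\otimes\wedge^q(\n).
\]
First I would upgrade the Koszul complex of $(\h,K;V)$ to a double complex
\[
C^{p,q} := \bigl(\wedge^p(\l/\k)\otimes\wedge^q(\n)\otimes V\bigr)_K,
\]
with vertical differential $d_v$ coming from the $\n$-Chevalley--Eilenberg complex (well-defined because $\n$ is an ideal of $\h$) and horizontal differential $d_h$ from the $\l$-direction. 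The total complex computes $\oH^{\CS}_*(H,V)$ by Proposition~\ref{Prop: homology comparison}.

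Next I would take the spectral sequence associated to the filtration by $\l$-degree. Because $\l/\k$ and $\n$ are finite-dimensional, the double complex is supported on a finite rectangle, so the spectral sequence is first-quadrant and converges strongly after finitely many pages. Since $K$ is compact, the $K$-coinvariant functor is exact on Fréchet $K$-modules and commutes with $d_v$; combined with $\oH_q(\n,V)\cong\oH^{\CS}_q(N,V)$ (from Proposition~\ref{Prop: homology comparison} applied to $N$, whose maximal compact is trivial), this gives
\[
E^1_{p,q} = \bigl(\wedge^p(\l/\k)\otimes \oH^{\CS}_q(N,V)\bigr)_K,
\]
with $d^1$ equal to the Chevalley--Eilenberg differential of $\l$ acting on $\oH^{\CS}_q(N,V)$. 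The standing hypothesis $\oH^{\CS}_q(N,V)\in\smod_L$ is exactly what I need to apply Proposition~\ref{Prop: homology comparison} a second time, yielding
\[
E^2_{p,q} = \oH_p\bigl(\l,K;\oH^{\CS}_q(N,V)\bigr)\cong \oH^{\CS}_p\bigl(L,\oH^{\CS}_q(N,V)\bigr).
\]

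For the Casselman--Wallach part, the assumption $V\in\smod_{G\times H}$ together with the commutativity of $G$ and $H$ makes the whole construction $G$-equivariant, so the induced filtration on $\oH^{\CS}_i(H,V)$ consists of morphisms in $\smod_G$ and its associated graded pieces are subquotients of the $E^2_{p,q}$ with $p+q=i$. Assuming these are all Casselman--Wallach representations of $G$, Lemma~\ref{Lem: CW closed image} gives that $\Rep_G^{\mathrm{CW}}$ is abelian and hence closed under subquotients, so each associated graded piece is CW; closure of CW under extensions in $\smod_G$ (finite length is preserved by extensions, while smooth Fréchet moderate growth is built into $\smod_G$) then promotes the successive extensions to a CW representation. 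The main technical obstacle is the topological bookkeeping, namely verifying that the identification of the $E^2$-page is an isomorphism of smooth Fréchet $L$-modules of moderate growth and that the resulting spectral sequence of topological vector spaces converges to the correct limit; this depends crucially on exactness of $K$-coinvariants and on the hypothesis that $\oH^{\CS}_q(N,V)$ already lies in $\smod_L$ rather than being merely a possibly non-Hausdorff locally convex space.
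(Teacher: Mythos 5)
Your double-complex construction $(\wedge^p(\l/\k)\otimes\wedge^q\n\otimes V)_K$ is the natural one and is almost certainly how the cited references build this spectral sequence; by contrast the paper itself gives no independent argument and simply invokes \cite[Proposition 3.9]{geng2025Shalika} for the spectral sequence and \cite[Lemma 2.15]{wu2025BZfiltration} together with Lemma~\ref{Lem: CW closed image} for the Casselman--Wallach conclusion. Your identification of the pages is correct: since $\n$ is an ideal, the $(\h,K)$ Chevalley--Eilenberg differential splits into bidegree $(-1,0)$ and $(0,-1)$ pieces once the bracket term $[\l,\n]\subseteq\n$ is grouped with $d_h$ (this is precisely what makes $d^1$ the $(\l,K)$-differential for the $\l$-module $\oH_q(\n,V)$), and exactness of $K$-coinvariants plus the standing hypothesis $\oH^{\CS}_q(N,V)\in\smod_L$ let you apply Proposition~\ref{Prop: homology comparison} a second time to get $E^2_{p,q}\cong\oH^{\CS}_p(L,\oH^{\CS}_q(N,V))$.

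The gap is in what you defer as ``topological bookkeeping''; that is not a loose end but the entire content of both citations, and as written your sketch of the CW step is circular. You say the filtration on $\oH^{\CS}_i(H,V)$ consists of morphisms in $\smod_G$ with graded pieces that are topological subquotients of the $E^2$-terms, but $\oH^{\CS}_i(H,V)$ is a priori only a possibly non-Hausdorff locally convex space, so neither ``morphism in $\smod_G$'' nor ``subquotient'' nor ``extension'' automatically has the topological meaning you need, and invoking closure of CW under extensions presupposes the Hausdorffness you are trying to establish. The argument must run the other way: from the hypothesis that each $E^2_{p,q}$ is CW and from Lemma~\ref{Lem: CW closed image}, deduce that every higher differential has closed image so that every page $E^r$ is CW, and then climb the column filtration of the total complex, using the resulting long exact sequences together with a Hausdorffness-propagation statement such as Lemma~\ref{Lem: Hausdorff LES} to push the CW property up to $\oH^{\CS}_i(H,V)$. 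That careful topological induction is exactly what \cite[Lemma 2.15]{wu2025BZfiltration} supplies, and your proof would need to reconstruct it to be complete.
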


\begin{proof}
    The first assertion is exactly \cite[Proposition 3.9]{geng2025Shalika}. The second assertion follows from \cite[Lemma 2.15]{wu2025BZfiltration} together with Lemma \ref{Lem: CW closed image}.
\end{proof}

\begin{lemt}[{\cite[Lemma 6.2.2]{aizenbud2015twisted}}]\label{Lem: Gourevitch Lemma}
    Let X be a Nash manifold and let V be a real vector space. Let $\phi: X\to V^{\ast}$ be a Nash map. Suppose $0 \in V^{\ast}$ is a regular value of $\phi$. Let $\CT(X)$ be the space of tempered functions on $X$. Then $\phi$ induces a map 
        \[
        \chi: V \to \CT(X), \quad \chi(v)(x) = \exp(\pi \sqrt{-1} \Re (\phi(x)(v))),
        \]
    which gives an action of V on $S(X)$ by $\pi(v)(f) := \chi(v) \cdot f$ (the usual multiplication). We have the following assertions.
    \begin{enumerate}[label=(\roman*)]
        \item $\oH_i^{\CS}(\v,S(X)) = 0$ for $i > 0$.
        
        \item Let $X_0 := \phi^{-1}(0)$, which is smooth by the regularity assumption. Denote by $r$ the restriction map $r: S(X) \to S(X_0)$. Then $r$ gives an isomorphism $\oH_0^{\CS}(\v,S(X)) \xrightarrow{\sim} S(X_0)$. 
        In particular, if $X_0$ is empty, then $\oH_0^{\CS}(\v,S(X)) = 0$.
    \end{enumerate}
\end{lemt}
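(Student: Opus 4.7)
My plan is to translate the Schwartz homology into a Koszul-type complex and then localize on $X$ around $X_0$. Since $V$ is a simply connected abelian Lie group, Proposition~\ref{Prop: homology comparison} (with $K$ trivial) identifies $\oH_i^{\CS}(\v, S(X))$ with the homology of the Chevalley--Eilenberg complex $\wedge^\bullet V \otimeshat S(X)$. After absorbing $\pi\sqrt{-1}$ into a chosen basis $e_1,\dots,e_n$ of $V$ and writing $\phi_i(x):=\Re\phi(x)(e_i)\in\CT(X)$, the differential becomes
\[
\partial(e_{i_1}\wedge\cdots\wedge e_{i_k}\otimes f) = \sum_j (-1)^{j-1}e_{i_1}\wedge\cdots\widehat{e_{i_j}}\cdots\wedge e_{i_k}\otimes \phi_{i_j}f,
\]
i.e.\ contraction with $\Phi = \sum_i\phi_i e_i^*$. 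So the lemma becomes a statement about the Koszul homology of the ``ideal'' $(\phi_1,\dots,\phi_n)\subset S(X)$.

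I would then proceed by localizing on a Nash open cover $X=U\cup W$, where $U:=X\setminus X_0$ and $W$ is an open Nash tubular neighborhood of $X_0$. On $U$ the Koszul complex is contractible: setting $\Phi^\sharp:=\sum_i\phi_i e_i$ and $|\phi|^2:=\sum_l\phi_l^2$, the operator
\[
h(\omega\otimes f):=\frac{1}{|\phi|^2}\,\Phi^\sharp\wedge\omega\otimes f
\]
is well-defined on $S(U)$ because $|\phi|^2$ is a Nash function vanishing only on $X_0$, so $1/|\phi|^2$ is tempered on $U$; the Cartan identity $\iota_\Phi(\Phi^\sharp\wedge\omega)=|\phi|^2\,\omega-\Phi^\sharp\wedge\iota_\Phi\omega$ then gives $\partial h+h\partial=\id$. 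On $W$, the Nash submersion theorem (applicable since $d\phi$ is surjective along $X_0$) provides a Nash diffeomorphism $W\cong X_0\times\R^n$ under which $\phi$ is projection. Via $S(W)\cong S(X_0)\otimeshat S(\R^n)$ and the \kun formula, the $W$-Koszul complex factors as $S(X_0)\otimeshat[\,\text{Koszul of multiplication by coordinates on }S(\R^n)\,]$. Fourier transform converts multiplication by $z_i$ into $-\partial_i/(2\pi\sqrt{-1})$, turning the last complex into the Chevalley--Eilenberg complex of the translation action of $\R^n$ on $S(\R^n)$; since $S(\R^n)\cong\ind_{\{e\}}^{\R^n}\C$ as a Schwartz representation, Shapiro's lemma (Proposition~\ref{Prop: Shapiro Lem}) gives $\oH_0=\C$ and $\oH_i=0$ for $i>0$, with the degree-zero generator corresponding on the original side to evaluation at $0$, i.e.\ restriction $X_0\times\R^n\to X_0\times\{0\}$.

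To combine these I would use a Mayer--Vietoris short exact sequence
\[
0\to S(U\cap W)\to S(U)\oplus S(W)\to S(X)\to 0,
\]
obtained from a Nash tempered partition of unity subordinate to $\{U,W\}$ (which can be built from a Nash cutoff in $|\phi|^2$). This is a sequence of smooth $V$-modules since the action is by ordinary multiplication by $\chi(v)$, which commutes with the cutoffs. The associated long exact sequence in Koszul homology, together with the vanishing on $U$ (and on $U\cap W\subset U$, by the same homotopy) and the local product computation on $W$, yields $\oH_i^{\CS}(\v, S(X))=0$ for $i>0$ and $\oH_0^{\CS}(\v, S(X))\cong S(X_0)$ via the restriction map $r$; the corollary for $X_0=\emptyset$ is immediate since $S(\emptyset)=0$. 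The main obstacle is precisely this last assembly step: Schwartz functions do not form a sheaf in the naive sense, so constructing the adapted tempered Nash partition of unity and verifying exactness of the Mayer--Vietoris sequence in the Schwartz category (equivalently, the compatibility of extension-by-zero with the $V$-module structure) constitute the real technical content of the proof.
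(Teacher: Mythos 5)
The paper cites this lemma from Aizenbud--Gourevitch without reproducing a proof, so there is no argument in the paper to compare against; I'll assess your attempt on its own merits. Your overall strategy --- identifying the Schwartz homology with the Koszul complex of the multiplication operators $\phi_i$ via Proposition~\ref{Prop: homology comparison}, contracting it on $U=X\setminus X_0$ with the homotopy $h=\tfrac{1}{|\phi|^2}\Phi^\sharp\wedge(-)$, reducing to a local product model near $X_0$, and assembling via a two--set Mayer--Vietoris sequence --- is the natural one, and the homotopy computation on $U$ is correct. The step you flag at the end as the ``main obstacle'' is in fact not a problem: for a Nash open cover $X=U\cup W$ the sequence $0\to S(U\cap W)\to S(U)\oplus S(W)\to S(X)\to 0$ (with extension-by-zero maps) is exact --- this is the cosheaf property of Schwartz spaces due to Aizenbud--Gourevitch --- and the maps are automatically $V$-equivariant because $\chi(v)$ is a globally defined tempered function on $X$.

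The genuine gap is the invocation of a ``Nash submersion theorem'' to get a Nash diffeomorphism $W\cong X_0\times\R^n$ under which $\phi$ becomes the \emph{linear} projection. The Nash tubular neighborhood theorem yields $W\cong N_{X_0/X}\cong X_0\times V^*$ (the normal bundle being trivialized by $d\phi$), but under that diffeomorphism $\phi$ is not the coordinate projection. Trying to repair this by composing with $(p,\phi)\colon W\to X_0\times V^*$, where $p$ is the tubular retraction, gives a Nash open embedding whose image is a neighborhood of $X_0\times\{0\}$ that need not be a product; after shrinking to a tube of Nash-varying width $\{|v|<\epsilon(x_0)\}$ and rescaling, $\phi$ becomes $(x_0,v)\mapsto\epsilon(x_0)v$ --- which is fine since the Koszul complexes of $(\epsilon\phi_i)$ and $(\phi_i)$ are isomorphic via $\epsilon^k$ in degree $k$ --- but the resulting fiber is a bounded ball $B\subsetneq V^*$, not $\R^n$. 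Your Fourier-transform/Shapiro computation relies essentially on the fiber being all of $\R^n$ (Fourier transform is unavailable on a Nash ball, and the constant function is not in $S(B)$, so $\oH_0^{\mathcal S}(\v,S(B))=\C$ cannot be read off as you describe). The same issue appears if one instead uses local Nash implicit-function-theorem charts: they produce $\phi=$ projection onto a ball, not onto $V^*$. So either a Nash Ehresmann-type trivialization with \emph{unbounded} fibers must be produced (it does not follow from the tubular neighborhood theorem alone), or the local model on a ball must be handled by a different device --- this step is where the argument, as written, does not close.
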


We also need the following two lemmas when studying the filtration on the Weil representation.
\begin{lemt}\label{Lem: homology commute ind}
    Let $G, H$ be two almost linear Nash groups, and $P$ (resp. $Q$) be a Nash subgroup of $G$ (resp. $H$). Take $\pi \in \smod_{P \times Q}$. Assume that $\oH^{\CS}_{i}(P, \pi \otimes \delta_{G/P})$ is Hausdorff for all $i \geq 0$. Then for all $i \geq 0$, there is an identification in $\smod_H$
    $$
    \oH^{\CS}_{i}(G, \ind^{G \times H}_{P \times Q}\pi) \cong \ind^H_Q\oH^{\CS}_{i}(P, \pi \otimes \delta_{G/P}).
    $$
    Here, $\delta_{G/P} := (\delta_{G})|_{P} \cdot \delta_{P}^{-1}$.
\end{lemt}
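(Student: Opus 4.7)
The plan is to reduce to Shapiro's Lemma by factoring the Schwartz induction $\ind^{G \times H}_{P \times Q}$ into two stages, and then commuting the outer induction with the $G$-homology. The Hausdorffness hypothesis is exactly what makes this commutation legitimate.

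First I would establish an ``induction in stages'' identification in $\smod_{G \times H}$:
\[
\ind^{G \times H}_{P \times Q} \pi \;\cong\; \ind^H_Q\bigl(\ind^G_P \pi\bigr),
\]
where $\ind^G_P \pi$ is promoted to a $G \times Q$-representation via the residual $Q$-action on the coefficient $\pi$. On the level of Schwartz sections this is a Fubini identification for tempered bundles over $(P \backslash G) \times (Q \backslash H)$, which fits naturally into the framework of \eqref{Eq: geometric of schwartz ind} and Lemma~\ref{Lem: Schwartz Fibre bundle} applied with $X' = P \backslash G$ carrying trivial $Q$-action. Applying Shapiro's Lemma (Proposition~\ref{Prop: Shapiro Lem}) to the inner induction then yields, in $\smod_Q$,
\[
\oH^{\CS}_i\bigl(G, \ind^G_P \pi\bigr) \;\cong\; \oH^{\CS}_i\bigl(P, \pi \otimes \delta_{G/P}\bigr),
\]
and the hypothesis ensures that the right-hand side is Hausdorff.

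The core step is to show that, for any $V \in \smod_{G \times Q}$ whose $G$-homology is Hausdorff,
\[
\oH^{\CS}_i\bigl(G, \ind^H_Q V\bigr) \;\cong\; \ind^H_Q\, \oH^{\CS}_i(G, V).
\]
I would prove this by passing to the Koszul model of Proposition~\ref{Prop: homology comparison}. The key properties are: (i) Schwartz induction $\ind^H_Q$ is exact on $\smod$; (ii) it commutes with tensoring against the finite-dimensional space $\wedge^\bullet(\g/\k)$; and (iii) since $K \subset G$ acts only on the fiber of the induced bundle over $Q \backslash H$, taking $K$-coinvariants commutes with $\ind^H_Q$. Together these facts identify the Koszul complex of $\ind^H_Q V$ with $\ind^H_Q$ applied termwise to the Koszul complex of $V$. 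Under the Hausdorff hypothesis the boundaries of the latter complex have closed image, so the exact functor $\ind^H_Q$ preserves cycles and boundaries and hence commutes with the passage to homology.

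Chaining these three identifications yields the lemma. The main obstacle is this last commutation: while the Koszul complex is manifestly compatible with $\ind^H_Q$ term by term, passing to homology is not purely formal for topological complexes whose differentials need not have closed image. The Hausdorffness assumption is exactly what guarantees this closure, and hence allows the exact functor $\ind^H_Q$ to commute with forming cycles and boundaries.
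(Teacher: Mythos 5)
Your proposal is correct, and it reaches the conclusion by a genuinely different decomposition than the one in the paper. The paper does not invoke the Koszul model or induction in stages at all: it takes an abstract strong relatively projective resolution $R_\bullet \twoheadrightarrow \pi$ of the $P\times Q$-representation, observes that $\ind^{G\times H}_{P\times Q}R_\bullet$ is again a strong relatively projective resolution, and computes $G$-coinvariants of each term directly, arriving at $(\ind^{G\times H}_{P\times Q}R_i)_G \cong \ind^H_Q(R_i\otimes\delta_{G/P})_P$. This in one stroke replaces your two steps of induction-in-stages and Shapiro. The closing step is then identical in spirit to yours: the Hausdorffness hypothesis makes the boundaries $B_i$ in the complex $(R_\bullet\otimes\delta_{G/P})_P$ closed, the exactness of $\ind^H_Q$ then preserves kernels, images, and quotients, and the open mapping theorem upgrades the resulting continuous bijection to a topological isomorphism.

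What your route buys is modularity: you lean on the Koszul model (Proposition~\ref{Prop: homology comparison}), so the complex whose homology you manipulate is concrete, and the compatibilities with $\ind^H_Q$ you need are all elementary (finite-dimensional tensors and $K$-coinvariants for compact $K$ both commute with Schwartz induction). What the paper's route buys is that it avoids having to justify an ``induction in stages'' identity $\ind^{G\times H}_{P\times Q}\pi \cong \ind^H_Q(\ind^G_P\pi)$ for Schwartz induction as a separate step; this identity is true and is what Lemma~\ref{Lem: Schwartz Fibre bundle} essentially supplies, but the paper folds it into the single coinvariant computation on a projective term. One small point worth making explicit in your writeup: after identifying cycles and boundaries, you still need the open mapping theorem (or strict exactness of $\ind^H_Q$ on NF-spaces) to conclude that the algebraic identification of homology spaces is a topological one; you gesture at this with ``commutes with the passage to homology,'' but in the Fréchet setting it is precisely the Hausdorffness that makes the quotient again Fréchet and allows the open mapping theorem to close the argument.
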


\begin{proof}
    Let $R_{\bullet} \twoheadrightarrow \pi$ be a $P \times Q$ strong relative projective resolution of $\pi$, with each $R_{i}$ an NF-space. Then $\ind^{G \times H}_{P \times Q} R_{\bullet} \twoheadrightarrow \ind^{G \times H}_{P \times Q} \pi$ is a $G \times H$ strong relative projective resolution of $\ind^{G \times H}_{P \times Q} \pi$. Note that 
    $$
    (\ind^{G \times H}_{P \times Q} R_i)_G = (S(H) \otimeshat R_i \otimes \delta_{G/P} \otimes \delta_{Q}^{-1} )_{P \times Q} = \ind^{H}_{Q} (R_i \otimes \delta_{G/P})_P.
    $$
    Thus $\oH^{\CS}_{i}(G, \ind^{G \times H}_{P \times Q}\pi)$ is the $i$-th homology group of the chain complex
    $$\cdots \xrightarrow{} \ind^{H}_{Q} (R_i \otimes \delta_{G/P} )_P \xrightarrow{} \cdots.$$

    Denote $R_{\bullet}^{'} := R_{\bullet} \otimes \delta_{G/P}$. Consider
    the chain complex
    $$\cdots \xrightarrow{} (R_{i+1}^{'})_P \xrightarrow{d_{i+1}} (R_i^{'})_P \xrightarrow{d_i} (R_{i-1}^{'} )_P \xrightarrow{}\cdots.$$
    Set $K_i := \Ker\, d_i$, $B_i := \Im\, d_{i+1}$, both equipped with subspace topology of $(R_i^{'})_P$.
    Note that $K_i$ is clearly closed in $(R_i^{'})_P$, and $B_i$ is also closed in $(R_i^{'})_P$ since $\oH^{\CS}_{i}(P, \pi \otimes \delta_{G/P})$ is Hausdorff.
    As $\ind^{H}_{Q}$ is an exact functor, we obtain 
    $$
    0 \xrightarrow{} \ind^{H}_{Q} K_i \xrightarrow{} \ind^{H}_{Q} (R_i^{'})_P \xrightarrow{} \ind^{H}_{Q} (R_{i-1}^{'})_P,
    $$
    $$
    0 \xrightarrow{} \ind^{H}_{Q} K_{i+1} \xrightarrow{} \ind^{H}_{Q} (R_{i+1}^{'})_P \xrightarrow{} \ind^{H}_{Q} B_i \xrightarrow{} 0.
    $$
    Hence, 
    $$\oH^{\CS}_{i}(G, \ind^{G \times H}_{P \times Q}\pi) = \frac{\ind^{H}_{Q} K_i}{\ind^{H}_{Q} B_i}.$$
    Moreover, we have
    $$
    0 \xrightarrow{} \ind^{H}_{Q} B_i \xrightarrow{} \ind^{H}_{Q} K_i \xrightarrow{} \ind^{H}_{Q} \oH^{\CS}_{i}(P, \pi \otimes \delta_{G/P}) \xrightarrow{} 0
    $$
    According to the open mapping theorem, it follows that
    $$
    \oH^{\CS}_{i}(G, \ind^{G \times H}_{P \times Q}\pi) \cong \ind^H_Q\oH^{\CS}_{i}(P, \pi \otimes \delta_{G/P}).
    $$
\end{proof}

The following lemma provides us a sufficient condition under which inverse limits and Schwartz homology can be switched.

\begin{lemt}\label{Lem: homological filtration}
    Assume that $V \in \smod_{G \times H}$ admits a $G\times H$ decreasing filtration 
\[
V =: V_0 \supseteq V_1 \supseteq \cdots \supseteq V_k \supseteq \cdots
\]
such that $V \cong \varprojlim_k V / V_k$. Suppose that:
\begin{enumerate}
    \item For all $i$ and $k$, $\oH^{\CS}_{i}(G, V_k / V_{k+1})$ is a Casselman--Wallach representation of $H$;
    \item There exists an integer $K$ such that for all $k > K$ and all $i$, $\oH^{\CS}_{i}(G, V_k / V_{k+1}) = 0$.
\end{enumerate}
Then, for all $i$, we have a natural topological isomorphism
\[
\oH^{\CS}_{i}(G,  \varprojlim_k V / V_{k}) \;\cong\;  \varprojlim_k \, \oH^{\CS}_{i}(G, V / V_{k}),
\]
and the space on either side is a Casselman--Wallach representation of $H$.

\end{lemt}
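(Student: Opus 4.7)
The plan is to show that under the given hypotheses the inverse system $\{\oH^{\CS}_i(G, V/V_k)\}_k$ becomes constant (up to topological isomorphism) for $k$ sufficiently large, and that $\oH^{\CS}_i(G, V)$ is naturally identified with this stable value; both sides of the claimed isomorphism will then coincide with the same Casselman--Wallach representation of $H$.

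I would first establish, by induction on $k$, that $\oH^{\CS}_i(G, V/V_k)$ is a Casselman--Wallach representation of $H$ for every $i$ and every $k$. The base case $V/V_0 = 0$ is trivial, and the inductive step uses the short exact sequence
\[
0 \longrightarrow V_k/V_{k+1} \longrightarrow V/V_{k+1} \longrightarrow V/V_k \longrightarrow 0
\]
in $\smod_G$, its long exact sequence in Schwartz homology, hypothesis~(1) together with Lemma~\ref{Lem: CW closed image}, and the fact that $\Rep_H^{\mathrm{CW}}$ is abelian and closed under extensions. For $k > K$ the flanking terms $\oH^{\CS}_i(G, V_k/V_{k+1})$ vanish by hypothesis~(2), so the transition map $\oH^{\CS}_i(G, V/V_{k+1}) \to \oH^{\CS}_i(G, V/V_k)$ is a continuous bijection between \Fre spaces and hence a topological isomorphism by the open mapping theorem. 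Consequently
\[
\varprojlim_k \, \oH^{\CS}_i(G, V/V_k) \;\cong\; \oH^{\CS}_i(G, V/V_{K+1}),
\]
which is a CW representation of $H$.

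Next, I would show that $\oH^{\CS}_i(G, V)$ agrees with this stable value by proving $\oH^{\CS}_i(G, V_{K+1}) = 0$ for all $i$ and applying the long exact sequence associated to $0 \to V_{K+1} \to V \to V/V_{K+1} \to 0$. Schwartz homology is computed by the Koszul complex of Proposition~\ref{Prop: homology comparison}. Since $V \cong \varprojlim_k V/V_k$ topologically and $V_{K+1} \subseteq V$ is closed, the left exactness of $\varprojlim$ yields a topological isomorphism $V_{K+1} \cong \varprojlim_k V_{K+1}/V_k$. Because each $\wedge^{\ell}(\g/\k)$ is finite-dimensional and $K$ is compact (so that $K$-coinvariants coincide with $K$-invariants via Haar averaging), the Koszul complex functor commutes with countable projective limits, giving
\[
(\wedge^{\bullet}(\g/\k) \otimes V_{K+1})_K \;\cong\; \varprojlim_k \, (\wedge^{\bullet}(\g/\k) \otimes V_{K+1}/V_k)_K.
\]
The stabilization argument above, applied now to the filtration $V_{K+1} \supseteq V_{K+2} \supseteq \cdots$, shows that every complex in this tower is acyclic, and the transition maps are surjective.

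Finally, I would invoke the Mittag--Leffler argument: given a cycle $z = (z^k)_k \in \varprojlim_k \ker d^k$, I would construct compatible preimages $(y^k)_k$ under the differential by induction on $k$, correcting each tentative lift $\tilde y^{k+1}$ by a boundary using the surjectivity of the transition maps on $(\wedge^{\bullet}(\g/\k) \otimes V_{K+1}/V_k)_K$ and the acyclicity at level $k$. This argument is purely algebraic and requires no topological lifting, so it yields $\oH^{\CS}_i(G, V_{K+1}) = 0$. Combined with the previous step, this produces a natural topological isomorphism $\oH^{\CS}_i(G, V) \cong \oH^{\CS}_i(G, V/V_{K+1}) \cong \varprojlim_k \oH^{\CS}_i(G, V/V_k)$, with both sides a CW representation of $H$. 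The delicate point, and the part requiring the most care, is verifying that the Koszul complex functor interacts correctly with the projective limit---in particular that $K$-coinvariants for compact $K$ and finite-dimensional tensor products commute with countable NF projective limits---after which Mittag--Leffler proceeds as in the algebraic case.
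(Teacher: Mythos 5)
Your proof is correct, but it follows a genuinely different route from the paper's. The paper proves the lemma in one stroke by citing Grothendieck's result on commuting (co)homology with countable inverse limits (EGA~\rom{3}, Proposition~13.2.3), applied to the inverse system of chain complexes coming from a functorial strong relative projective resolution $P_{k,\bullet}\to V/V_k$; the only work is verifying the two Mittag--Leffler conditions, namely that the system $\{V/V_k\}$ is ML because the transition maps are surjective, and that the system $\{\oH^{\CS}_{i+1}(G,V/V_k)\}$ is ML because, by hypothesis~(2) and the long exact sequence for $0\to V_k/V_{k+1}\to V/V_{k+1}\to V/V_k\to 0$, these transition maps become isomorphisms once $k>K$.

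Your argument instead makes the stabilization phenomenon the organizing principle. You first show by induction (using hypothesis~(1), the long exact sequence, and Lemma~\ref{Lem: CW closed image}) that every $\oH^{\CS}_i(G,V/V_k)$ is CW, then observe that for $k>K$ the transition maps are continuous bijections of \Fre{} spaces, hence topological isomorphisms, so $\varprojlim_k\oH^{\CS}_i(G,V/V_k)\cong\oH^{\CS}_i(G,V/V_{K+1})$. You then prove separately that $\oH^{\CS}_i(G,V_{K+1})=0$ using the Koszul complex of Proposition~\ref{Prop: homology comparison}: the identification $V_{K+1}\cong\varprojlim_k V_{K+1}/V_k$, the fact that tensoring with $\wedge^{\ell}(\g/\k)$ and taking $K$-(co)invariants for compact $K$ commute with countable NF projective limits and preserve surjections, the acyclicity of each finite-level quotient $V_{K+1}/V_k$, and a hands-on Mittag--Leffler lifting. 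Finally the long exact sequence of $0\to V_{K+1}\to V\to V/V_{K+1}\to 0$ transfers the answer back to $V$. What the paper's citation approach buys is brevity and generality (it never leaves the resolution category). What your approach buys is transparency: it isolates exactly which piece of $V$ is invisible to Schwartz homology, and pins down a concrete model (Koszul, compact $K$) in which the commutation with limits is manifestly valid.

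One small point you should make explicit: in the last step the long exact sequence only produces a \emph{continuous bijection} $\oH^{\CS}_i(G,V)\to\oH^{\CS}_i(G,V/V_{K+1})$, a priori on a space $\oH^{\CS}_i(G,V)$ that is not known to be Hausdorff. Since a continuous injection into a Hausdorff space forces the source to be Hausdorff, $\oH^{\CS}_i(G,V)$ is Hausdorff and hence \Fre{} (being a separated quotient of an NF-space), and only then does the open mapping theorem upgrade the bijection to a topological isomorphism. Your sketch leaves this implicit, and it is worth a sentence, because the whole lemma is precisely about establishing Hausdorffness where it is not obvious.
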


\begin{proof}
    This follows from \cite[Proposition 13.2.3]{EGA3}, with cochain complexes $K^{*}_{\alpha}$ in \cite[Proposition 13.2.3]{EGA3} replaced by chain complexes $P_{k,\bullet}$ where
\[
P_{k,\bullet}\to V/V_{k}\to 0
\]
is a strong relative projective resolution. Recall that an inverse system indexed by $\BZ_{\geq 0}$
\[
\cdots\to U_{\alpha+1}\to U_{\alpha}\to\cdots
\]
satisfies the Mittag-Leffler condition if for any $\alpha\in \BZ_{\geq 0}$, there is some $\beta\geq\alpha$ so that the image of
$$
U_{\gamma}\to U_{\alpha}
$$
are the same for all $\gamma\geq\beta$. We note that the system
\[
\cdots\to V/V_{k+1}\to V/V_{k}\to\cdots
\]
satisfies the Mittag-Leffler condition since the maps are all surjective. The system
\[
\cdots\to \oH^{\CS}_{i+1}(G,V/V_{k+1})\to \oH^{\CS}_{i+1}(G,V/V_{k})\to\cdots
\]
also satisfies the Mittag-Leffler condition by assumption (2). The proof of \cite[Proposition 13.2.3]{EGA3} can now be copied word by word.
\end{proof}

We conclude this subsection by recalling a lemma concerning the action on coinvariant spaces.

\begin{lemt}[{\cite[Lemma 2.5]{FSX2018godement}}]\label{Lem: GL action on coinv}
    Let $\RD = \BR, \BC$ or $\BH$, and set $G := \GL_n(\RD)$. Then $S(G)$ carries the natural left and right translation actions of $G \times G$. 
    Write $G_l$ for the subgroup $G \times \{1\}$ of $G \times G$.
    For $\pi \in \smod_G$, we have
    $$
    (S(G) \otimeshat \pi)_{G_l} \cong \pi,
    $$
    as representations of $G$.
\end{lemt}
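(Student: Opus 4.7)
The plan is a direct computation via Shapiro's lemma. I first realize $S(G) \cong \ind_{\{e\}}^G \C$ as a $G_l$-representation under the left-translation action, using the geometric description \eqref{Eq: geometric of schwartz ind}. Shapiro's lemma (Proposition~\ref{Prop: Shapiro Lem}), applied with the trivial subgroup $H = \{e\}$, combined with the unimodularity of $G = \GL_n(\RD)$ (so $\delta_G = 1$), then gives
\[
S(G)_{G_l} \;=\; \oH_0^{\CS}(G_l, S(G)) \;\cong\; \oH_0^{\CS}(\{e\}, \C) \;\cong\; \C,
\]
realized explicitly by integration against Haar measure, $f \mapsto \int_G f(g)\,dg$.

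Since $G_l$ acts only on the $S(G)$-factor of $S(G) \otimeshat \pi$, the exactness of $V \mapsto V \otimeshat \pi$ on the category of NF-spaces (recalled in the preliminaries) yields
\[
(S(G) \otimeshat \pi)_{G_l} \;\cong\; S(G)_{G_l} \otimeshat \pi \;\cong\; \C \otimeshat \pi \;\cong\; \pi,
\]
with the composite sending $f \otimes v \mapsto \bigl(\int_G f(g)\,dg\bigr)\,v$. The $G$-equivariance is immediate: the residual $G_r$-action on $S(G)_{G_l}$ is trivial by unimodularity (right translation preserves Haar measure), while the natural $G$-action on the $\pi$-factor is untouched by the $G_l$-coinvariant quotient.

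The main obstacle is largely topological bookkeeping: one must ensure all identifications are topological isomorphisms, for which the relevant facts (exactness of $\otimeshat$ on NF-spaces, Hausdorffness of $S(G)_{G_l}$ from the explicit continuous surjection onto $\C$) are already available in the preliminaries. If instead the intended convention is that $G_l$ acts diagonally on $S(G) \otimeshat \pi$ (left translation on $S(G)$ combined with the natural $G$-action on $\pi$), one first pre-composes with the untwisting automorphism $F(g) \mapsto \pi(g^{-1}) F(g)$ of $S(G) \otimeshat \pi$, realized as a space of Schwartz $\pi$-valued functions on $G$; this is well-defined and topological thanks to the moderate growth of $\pi$ and all its derivatives, and reduces to the situation above. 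The explicit isomorphism then becomes $F \mapsto \int_G \pi(g^{-1}) F(g)\,dg$, with the diagonal-$G_l$-invariance and $G_r$-equivariance (matching the $\pi$-action on the right) verified by straightforward changes of variables using unimodularity.
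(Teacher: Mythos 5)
The paper does not prove this lemma; it is cited directly from \cite[Lemma 2.5]{FSX2018godement}, so there is no internal proof to compare against. Your argument is essentially correct, but the organization is backwards: the interpretation you relegate to an ``if instead'' afterthought is the one the paper actually needs, and the interpretation you treat as primary does not, on its own, give the stated conclusion under the residual $G_r$-action.

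Concretely, in the non-diagonal reading (where $G_l$ acts only on $S(G)$), you correctly get $(S(G)\otimeshat\pi)_{G_l}\cong S(G)_{G_l}\otimeshat\pi\cong\pi$ as \emph{vector spaces}, and you correctly observe that the residual $G_r$-action on $S(G)_{G_l}\cong\BC$ is trivial (unimodularity). But this means that as a $G_r$-representation the coinvariant space is the trivial representation tensored with $\pi$, with $G_r$ acting trivially; the $G$-action that gives $\pi$ on the right-hand side is the untouched $\pi$-action, which is a \emph{third} $G$-action, not the residual $G_r = \{1\}\times G$ one. That is not how the lemma is invoked in the proof of Proposition~\ref{Prop: GL1GLm H0}: there one computes $\oH_0^{\CS}(G_1,\rho_1\otimes\pi)$ with $G_1$ acting diagonally (left translation on $S(G_1)$ and the $\pi$-action on $\pi$), and the resulting $H_1$-representation, where $H_1$ acts by right translation on $S(G_1)$, is required to be $\pi$. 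So the diagonal convention with $G_r$-equivariance is forced. Your final paragraph handles this correctly --- the untwisting $F(g)\mapsto\pi(g^{-1})F(g)$ is the right device, its continuity does follow from moderate growth of $\pi$, and the resulting map $F\mapsto\int_G\pi(g^{-1})F(g)\,dg$ is indeed $G_l$-invariant and $G_r$-equivariant by a change of variable and unimodularity --- so the proof as a whole is sound. You should, however, promote that paragraph to the main argument and drop (or at least clearly demote) the non-diagonal discussion, since stating ``$G$-equivariance is immediate'' there invites confusion about which $G$-action is being matched.
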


\subsection{Reduction step}\label{Subsec: reduction step}
In this subsection, we reduce the proof of Theorem \ref{ThmA} to the case of the principal series. The following lemma is crucial.

\begin{lemt}[{\cite[Corallory A.11]{HT1990AnalyticLocalization}}]\label{Lem: Hausdorff LES}
    Let $0 \to E \to F \to G \to 0$ be a short exact sequence of complexes of nuclear \fre spaces. Let $A \to B \to C$ be any three consecutive terms in the associated long exact sequence. 
    If C is Hausdorff, then $A \to B$ is a topological homomorphism, i.e., it is continuous and is an open map onto its image. If, in addition, $A \to B$ has closed kernel, then B is Hausdorff. 
\end{lemt}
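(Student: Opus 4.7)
The plan is to lift the question from the long exact sequence of (possibly non-Hausdorff) cohomology to the level of cocycles and cochains, where genuine Fréchet structure is available and the open mapping theorem applies. For a complex $X^\bullet$ of NF-spaces write $Z^n(X)$ and $B^n(X)$ for its cocycles and coboundaries, so that $H^n(X) = Z^n(X)/B^n(X)$ carries the quotient topology; this is Hausdorff iff $B^n(X)$ is closed in $Z^n(X)$. The three consecutive terms $A \to B \to C$ take one of three forms according to where the connecting homomorphism $\delta$ sits, and I would first treat the representative case $A = H^n(E)$, $B = H^n(F)$, $C = H^n(G)$; the two cases in which $\delta$ appears as the map $A\to B$ or in the adjacent slot reduce to the same type of argument after unpacking $\delta[g] = [df]$ for $f$ any lift of $g$, using that $\pi\colon F^m \to G^m$ is an open surjection of Fréchet spaces.

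For the main case, by exactness the preimage in $Z^n(F)$ of $\mathrm{im}(A \to B)$ is $Z^n(E) + B^n(F) = \pi^{-1}(B^n(G)) \cap Z^n(F)$, where $\pi\colon F^n \to G^n$. The hypothesis that $C$ is Hausdorff means $B^n(G)$ is closed in $Z^n(G)$, hence in $G^n$, so $Z^n(E) + B^n(F)$ is closed in $Z^n(F)$ and is itself an NF-space. The continuous surjection $\Phi\colon Z^n(E) \oplus F^{n-1} \to Z^n(E) + B^n(F)$, $(e,f) \mapsto e+df$, is then open by the open mapping theorem. Composing with the open quotient $Z^n(E) + B^n(F) \to (Z^n(E) + B^n(F))/B^n(F)$, and identifying the target with $\mathrm{im}(A \to B)$ carrying the subspace topology inherited from $B$ (via a routine comparison of subspace and quotient topologies on sub-quotients), I obtain an open continuous surjection $Z^n(E) \oplus F^{n-1} \to \mathrm{im}(A \to B)$ which factors through the open coordinate projection onto $Z^n(E)$, since $[e+df] = [e]$ modulo $B^n(F)$. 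The general principle that if $\pi_1$ is an open surjection and $\pi_2\circ\pi_1$ is open then $\pi_2$ is open yields openness of $Z^n(E) \to \mathrm{im}(A \to B)$; applying this principle once more to the quotient $Z^n(E) \to A = Z^n(E)/B^n(E)$ shows that $A \to \mathrm{im}(A \to B)$ is open, that is, $A \to B$ is a topological homomorphism.

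For the second assertion, suppose further that $A \to B$ has closed kernel. Then $A/\ker(A\to B)$ is Hausdorff, and by the strictness just established it is topologically isomorphic to $\mathrm{im}(A \to B)$ equipped with the subspace topology from $B$, so that image is Hausdorff. On the other hand $\mathrm{im}(A \to B) = \ker(B \to C)$ is closed in $B$ (being the preimage of $\{0\}$ under a continuous map to the Hausdorff space $C$), and hence contains $\overline{\{0\}}_B$. Since the closure of $0$ in $B$ coincides with its closure computed inside any closed subspace containing it, and this closure is trivial by Hausdorffness of $\mathrm{im}(A \to B)$, we conclude $\overline{\{0\}}_B = \{0\}$, i.e., $B$ is Hausdorff.

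The principal obstacle is that $A$, $B$, $C$ are only quotients of Fréchet spaces by possibly non-closed subspaces, so the open mapping theorem is not directly available for the induced maps on cohomology; the whole argument must be carried out at the cocycle level, and the hypothesis $C$ Hausdorff has to be converted into the concrete closedness of $Z^n(E)+B^n(F)$ in $Z^n(F)$. A subsidiary technical point is the treatment of the connecting homomorphism $\delta$ in the two remaining configurations, where both its continuity and its strictness must again be derived from Fréchet-level open mapping arguments applied to surjections of the form $F^m \to G^m$ together with the same sub-quotient topology comparisons as above.
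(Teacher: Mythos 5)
The paper does not prove this lemma: it is stated verbatim as a citation to Hecht--Taylor \cite[Corollary A.11]{HT1990AnalyticLocalization}, so there is no in-paper argument to compare against. I will therefore assess your proposal on its own merits.

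Your approach — pass from the (possibly non-Hausdorff) cohomology spaces down to genuine NF-spaces of cocycles and cochains, convert the hypothesis ``$C$ Hausdorff'' into closedness of a concrete subspace, apply the open mapping theorem there, and then push openness back up through open quotient maps using the factoring principle — is the right one, and it is presumably close to what Hecht--Taylor do. The representative case $A=H^n(E)$, $B=H^n(F)$, $C=H^n(G)$ is worked out completely and correctly: the identification $Z^n(E)+B^n(F)=\pi^{-1}(B^n(G))\cap Z^n(F)$, the closedness deduced from $B^n(G)$ closed, the open surjection $Z^n(E)\oplus F^{n-1}\twoheadrightarrow Z^n(E)+B^n(F)$, the agreement of quotient and subspace topologies on $(Z^n(E)+B^n(F))/B^n(F)$ (which holds because $B^n(F)\subseteq Z^n(E)+B^n(F)$), and the two applications of the factoring principle are all sound. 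The second assertion, that a closed kernel forces $B$ to be Hausdorff via $\overline{\{0\}}_B\subseteq\ker(B\to C)=\mathrm{im}(A\to B)\cong A/\ker$, is also correct.

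Two points deserve flagging, neither fatal. First, the two remaining positions of the triple relative to the connecting map $\delta$ are dismissed as ``the same type of argument,'' but the details genuinely differ. When $A\to B$ is $\delta$ (so $A=H^{n-1}(G)$, $B=H^n(E)$, $C=H^n(F)$), the closed preimage is $Z^n(E)\cap B^n(F)=d\bigl(\pi^{-1}(Z^{n-1}(G))\bigr)$ and the OMT is applied to $d:\pi^{-1}(Z^{n-1}(G))\to Z^n(E)\cap B^n(F)$. When $B\to C$ is $\delta$ (so $A=H^n(F)$, $B=H^n(G)$, $C=H^{n+1}(E)$), closedness of $\pi(Z^n(F))$ in $G^n$ is not immediate: one must observe $\pi(Z^n(F))=\pi(S)$ for the saturated closed set $S=d^{-1}(B^{n+1}(E))\subseteq F^n$, using $C$ Hausdorff, and then apply OMT to $\pi:Z^n(F)\to\pi(Z^n(F))$. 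These arguments have the same flavor as your main case but are not literally instances of it, and since the statement covers ``any three consecutive terms'' you should spell them out rather than wave. Second, a small slip: in your factoring principle, ``$\pi_1$ is an open surjection'' should read ``$\pi_1$ is a continuous surjection'' — the argument computes $\pi_2(O)=(\pi_2\circ\pi_1)(\pi_1^{-1}(O))$ and needs $\pi_1^{-1}(O)$ open, i.e.\ continuity of $\pi_1$. In all your applications $\pi_1$ is both, so nothing breaks, but the principle as stated is misphrased.
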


We are now ready to prove the following proposition, which reduces Theorem~\ref{ThmA} to Theorem~\ref{Thm: principal series CW}.

\begin{prpt}\label{Prop: principal series implies all}
    For $(G,H)$ as in Subsection \ref{Subsec: ThetaCorrespondence}, Theorem~\ref{Thm: principal series CW} implies Theorem~\ref{ThmA}.
\end{prpt}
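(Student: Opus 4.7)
The approach is to reduce Theorem \ref{ThmA} to Theorem \ref{Thm: principal series CW} in two stages, combining Casselman's subrepresentation theorem with long exact sequences in Schwartz homology. Lemma \ref{Lem: Hausdorff LES} and Lemma \ref{Lem: CW closed image} will serve as the key technical tools for controlling Hausdorffness throughout.

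The central step is an \emph{extension closure property}: if $0 \to \pi' \to \pi \to \pi'' \to 0$ is a short exact sequence in $\Rep_G^{\mathrm{CW}}$ and both $\oH_i^{\CS}(G, \Omega \otimeshat \pi')$ and $\oH_i^{\CS}(G, \Omega \otimeshat \pi'')$ are Casselman--Wallach for every $i$, then so is $\oH_i^{\CS}(G, \Omega \otimeshat \pi)$. To prove this, I would tensor with the NF space $\Omega$ (the functor $\Omega \otimeshat (-)$ is exact on NF spaces) to obtain a SES of NF-spaces in $\smod_{G \times H}$, then apply Schwartz homology to get a LES in $\smod_H$. Feeding the three consecutive terms $\oH_i(\pi') \to \oH_i(\pi) \to \oH_i(\pi'')$ into Lemma \ref{Lem: Hausdorff LES}, the right-hand term is Hausdorff (being CW), so the left arrow is a topological homomorphism; its kernel coincides with the image of the connecting morphism $\oH_{i+1}(\pi'') \to \oH_i(\pi')$, which, being a morphism between CW representations of $H$, has closed image by Lemma \ref{Lem: CW closed image}(2). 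Lemma \ref{Lem: Hausdorff LES} then forces $\oH_i(\pi)$ to be Hausdorff, and it fits into a SES of CW representations of $H$ (its image in $\oH_i(\pi'')$ and the quotient of $\oH_i(\pi')$ by the connecting image are both closed CW subquotients), so by closure of $\Rep_H^{\mathrm{CW}}$ under extensions it is itself CW.

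With this closure property in hand, a dévissage via composition series and induction on length reduces the problem to the case of irreducible $\pi$. For irreducible $\pi$, Casselman's subrepresentation theorem furnishes an embedding $\pi \hookrightarrow I := \Ind_{P_{\min}}^G V$ into a minimal principal series, yielding a SES $0 \to \pi \to I \to Q \to 0$ with $Q \in \Rep_G^{\mathrm{CW}}$. Theorem \ref{Thm: principal series CW} gives that $\oH_*^{\CS}(G, \Omega \otimeshat I)$ is CW. The analogue of the extension closure property in the subobject direction (proved by the same LES$+$Lemma \ref{Lem: Hausdorff LES}$+$Lemma \ref{Lem: CW closed image} manipulations applied to a different consecutive triple in the LES) then reduces the proof to showing $\oH_*^{\CS}(G, \Omega \otimeshat Q)$ is CW, which is handled by a further induction on the composition factors of $Q$.

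The principal obstacle is the termination of the induction in the irreducible case: a naive induction on the length of $\pi$ fails because $Q$ may have length exceeding that of $\pi$. The cleanest way around this is a double induction, for instance outer induction on $\ell(I)$ nested with inner induction on $\ell(I) - \ell(\pi)$, combined with a well-ordering on irreducible CW representations (such as by the minimal length of a principal series containing them as subrepresentation) that guarantees every composition factor of $Q$ is strictly smaller than $\pi$. Setting up this bookkeeping so that Casselman's theorem applied to the composition factors of $Q$ lands in a ``smaller'' case is the most delicate aspect of the argument; once it is in place, the rest of the proof is a straightforward combination of the extension closure property with Theorem \ref{Thm: principal series CW}.
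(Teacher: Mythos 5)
Your proposal identifies the right ingredients (Casselman's subrepresentation theorem, the long exact sequence, Lemma~\ref{Lem: Hausdorff LES}, Lemma~\ref{Lem: CW closed image}) and correctly locates the central difficulty: the quotient $Q = I/\pi$ may be ``more complicated'' than $\pi$, so a na\"ive induction on composition factors does not obviously terminate. However, your proposed fix --- a double induction hinged on a well-ordering of irreducibles ``such as by the minimal length of a principal series containing them as subrepresentation'' --- is not actually justified, and in fact there is no reason such an ordering should make the composition factors of $Q$ strictly smaller than $\pi$. For instance, $\pi$ itself can reappear with multiplicity as a composition factor of $Q$, in which case no ordering on irreducibles breaks the cycle. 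You flag this as ``the most delicate aspect'' but leave it unresolved, so as written the argument has a genuine gap.

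The paper avoids the issue by a different and cleaner inductive scheme: an \emph{outer downward induction on the homology degree} $i$, starting from $i = \dim(G/K)+1$ where all homology groups vanish, combined with an inner induction on the length of $\pi$. The point is that in the long exact sequence coming from $0 \to \pi \to I \to \sigma \to 0$, the only non-principal-series term one needs at the critical triple is $\oH^{\CS}_{k+1}(G, \Omega \otimeshat \sigma)$, and this is controlled by the outer inductive hypothesis --- which asserts the degree-$(k+1)$ statement for \emph{every} Casselman--Wallach representation, with no dependence on length or composition structure. Thus the quotient $\sigma$ is handled at homology degree $k+1$, not $k$, and the termination problem you wrestled with simply never arises. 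This downward induction on $i$ is the missing idea in your proposal, and without it your dévissage does not close.
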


\begin{proof}
    Let $(G,H)$ be as in Subsection~\ref{Subsec: ThetaCorrespondence}, and denote by $\Omega$ the Weil representation of $G \times H$.

    Let $\pi$ be a Casselman--Wallach representation of $G$. 
    We aim to show that, for all $i \geq 0$, $\oH^{\CS}_{i}(G, \Omega \otimeshat \pi)$ is a Casselman--Wallach representation of $H$.
    We proceed by downward induction on $i$. Let $n = \dim (G/K)$, where $K$ is a maximal compact subgroup of $G$.
    The base case is $i=n+1$, for which all the homology groups vanish.

    Now we do the induction step. Assume that for every $\pi \in \Rep_G^{\CWR}$, 
    $\oH^{\CS}_{k+1}(G, \Omega \otimeshat \pi)$ is a Casselman--Wallach representation of $H$, we claim that, for every $\pi \in \Rep_G^{\CWR}$, $\oH^{\CS}_{k}(G, \Omega \otimeshat \pi)$ is also Casselman--Wallach. 
    We prove this claim by induction on the length of $\pi$. 
    First, assume that $\pi$ is irreducible.
    In this case, by Casselman's subrepresentation theorem, we have a short exact sequence
    \[
    0 \xrightarrow{} \pi  \xrightarrow{} I  \xrightarrow{} \sigma \xrightarrow{} 0,
    \]
    where $I$ is a principal series representation. Then we have
    \begin{equation*}
        \cdots \xrightarrow{} \oH^{\CS}_{k+1}(G, \Omega \otimeshat I)  \xrightarrow{} \oH^{\CS}_{k+1}(G, \Omega \otimeshat \sigma)\xrightarrow{} \oH^{\CS}_{k}(G, \Omega \otimeshat \pi) \xrightarrow{} \oH^{\CS}_{k}(G, \Omega \otimeshat I) \xrightarrow{} \cdots .
    \end{equation*}
    By Theorem~\ref{Thm: principal series CW}, the induction hypothesis, Lemma \ref{Lem: CW closed image} and Lemma~\ref{Lem: Hausdorff LES},
    we deduce that $\oH^{\CS}_{k}(G, \Omega \otimeshat \pi)$ is Casselman--Wallach. 

    Assume the claim is proved for every $\pi$ with length at most $l$. Let $\pi$ be a Casselman--Wallach representation of length $l+1$, and we take a short exact sequence 
    \[
    0 \xrightarrow{} \rho  \xrightarrow{} \pi  \xrightarrow{} \tau \xrightarrow{} 0,
    \]
    such that $\tau$ is irreducible and $\rho$ is of length $l$ (such an exact sequence clearly exists). It gives rise to the long exact sequence
    \[
    \cdots \xrightarrow{} \oH^{\CS}_{k+1}(G, \Omega \otimeshat \tau) \xrightarrow{} \oH^{\CS}_{k}(G, \Omega \otimeshat \rho) \xrightarrow{} \oH^{\CS}_{k}(G, \Omega \otimeshat \pi)  \xrightarrow{} \oH^{\CS}_{k}(G, \Omega \otimeshat \tau)\xrightarrow{} \cdots .
    \]
    Lemma~\ref{Lem: Hausdorff LES} thus implies that $\oH^{\CS}_{k}(G, \Omega \otimeshat \pi)$ is Casselman--Wallach. This concludes the induction step. 
\end{proof}

\section{Type II dual pairs}\label{Sec: Type II CW}
In this section, we prove Theorem \ref{Thm: principal series CW} for the Type~II dual pairs. Our main tool is the corank\mbox{-}one parabolic stable filtration on the Weil representation. 
Using this filtration, we reduce the problem for $(\GL_{n+1},\GL_m)$ to the case $(\GL_n,\GL_m)$, and the basic case $(\GL_1,\GL_m)$ is handled by direct calculation in Subsection \ref{Subsec: (GL1,GLm)}.

We fix the notation for this section. Let $F$ be an archimedean local field and let $\RD$ be a central division algebra over $F$ of dimension $r^2$. Let $\psi:F\to\mathbb{C}^{\times}$ be a non-trivial unitary character of $F$.
For positive integers $n,m$, denote by $M_{n,m}$ the space of $n\times m$ matrices with entries in $\RD$, and set $M_{0,m}=M_{n,0}:=\{0\}$.
Let $\operatorname{Nrd}:M_{n,n}\to F$ be the reduced norm and denote $\nu:=|\operatorname{Nrd}|^r_F$, where $|\cdot|_{F}$ is the normalized absolute value of $F$. 
Let $\operatorname{Trd}: M_{n,n}\to F$ be the reduced trace.
For $x \in M_{n,m}$, we write $\prescript{t}{}{x}$ for the transpose of $x$ when $\RD = \BR$, and conjugate transpose of $x$ when $\RD = \BC$ or $\BH$.

To distinguish the two factors in the Type~II dual pair, we set
$$
G_n:=\GL_n(\RD),\qquad H_m:=\GL_m(\RD),
$$
and also put $G_0 = H_0 :=\{1\}$.
For $k \geq 1$, denote by $S(G_k)$ the space of Schwartz functions on $G_k$ and let $\rho_k$ be the natural action of $G_k\times H_k$ on $S(G_k)$:
\[
(\rho_k(g,h)\phi)(x):=\phi(g^{-1}xh),
\qquad g,x\in G_k,\;h\in H_k,\;\phi \in S(G_n).
\]
Let $S(M_{n,m})$ denote the space of Schwartz functions on $M_{n,m}$.  
Denote by $\sigma_{n,m}$ the natural action of $G_n\times H_m$ on $S(M_{n,m})$, which is given by
\[
(\sigma_{n,m}(g,h)\phi)(x):=\phi(g^{-1}xh),
\qquad g\in G_n,\;h\in H_m,\;x\in M_{n,m},\;\phi \in S(M_{n,m}).
\]
The Weil representation $\Omega_{n,m}$ of the dual pair $(G_n, H_m)$ is given by
\[
\Omega_{n,m}(g,h):=\nu(g)^{-\frac{m}{2}}\,\nu(h)^{\frac{n}{2}}\,\sigma_{n,m}(g,h).
\]

For $0\leq t\leq n$, let $P_{t,n-t}$ (or $P_t$ when no confusion arises) denote the standard parabolic subgroup of $G_n$ attached to the partition $(t,n-t)$.  
We write the Levi decomposition as $P_t=L_t\ltimes N_t$ with $L_t = G_t \times G_{n-t}$.  
Similarly, we define the standard parabolic subgroup $Q_k$ of $H_m$ for $0\leq k\leq m$.

\subsection{The case of \texorpdfstring{$(\GL_1, \GL_m)$}{(GL1, GLm)}}\label{Subsec: (GL1,GLm)}
The case of $(\GL_1, \GL_m)$ is our basic case in the induction step.
In this subsection, we explicitly treat the full theta lift for this case.
To simplify notation, we use the action $\sigma_{1,m}$. 
Since $\Omega_{1,m}$ differs from $\sigma_{1,m}$ only by a character, this causes no loss of generality.

Consider $\RD = \BR,\ \BC,\ \BH.$
Note that 
\begin{equation}\label{Eq: D decomposition}
    \BR^{\times} = \{\pm 1\} \times \BR^{\times}_{>0}, \ \BC^{\times} = \RU(1) \times \BR^{\times}_{>0}, \ \BH^{\times} = \SU(2) \times \BR^{\times}_{>0},
\end{equation}
where the first factor is always compact. 

Let $V := M_{1,m}$, $U := M_{1,m} \setminus \{0\}$. 
Set $r = \dim_{\BR}(V)$.
Then $U$ is a principal $\BR^{\times}_{>0}$-bundle over the sphere $\BS^{r-1}$.
Hence, by \cite[Proposition~5.2]{CHEN2021108817}, $S(U)$ is a relative projective representation of $\BR^{\times}_{>0}$.

Let $\chi$ be a character of $\BR^{\times}_{>0}$.
Consider the short exact sequence
    \[
    0 \xrightarrow{} S(U)  \xrightarrow{} S(V)  \xrightarrow{} \BC \llbracket V \rrbracket \xrightarrow{} 0,
    \]
then we have the associated long exact sequence
    \begin{multline}\label{Eq: GL1 GLm RLES}
    0 \xrightarrow{} \oH^{\CS}_{1}(\BR^{\times}_{>0}, S(V) \otimes \chi) \xrightarrow{} \oH^{\CS}_{1}(\BR^{\times}_{>0}, \BC \llbracket V \rrbracket \otimes \chi)  \xrightarrow{\delta} \oH^{\CS}_{0}(\BR^{\times}_{>0}, S(U) \otimes \chi) \\
    \xrightarrow{} \oH^{\CS}_{0}(\BR^{\times}_{>0}, S(V) \otimes \chi) \xrightarrow{} \oH^{\CS}_{0}(\BR^{\times}_{>0}, \BC \llbracket V \rrbracket \otimes \chi) \xrightarrow{} 0.
    \end{multline}

For $k \in \BZ_{\geq 0}$, define a character $\chi_k$ of $\BR^{\times}_{>0}$ by $\chi_k(x)=x^{k}$.
When $\chi \neq \chi_k$ for any $k \in \BZ_{\geq 0}$, using Borel's Lemma \cite[Proposition 8.2, Proposition 8.3, and Theorem 8.5]{CHEN2021108817}, one can easily show that 
$$\oH^{\CS}_{i}(\BR^{\times}_{>0}, \BC \llbracket V \rrbracket \otimes \chi) = 0, \quad \forall i \in \BZ_{\geq 0}.
$$
For the character $\chi_k$, we have the following lemma.
\begin{lemt}\label{Lem: GL1 GLm closed orbit finite dim}
    Both spaces $\oH^{\CS}_{1}(\BR^{\times}_{>0}, \BC \llbracket V \rrbracket \otimes \chi_k)$ and $\oH^{\CS}_{0}(\BR^{\times}_{>0}, \BC \llbracket V \rrbracket \otimes \chi_k)$ are finite dimensional. 
    Indeed, they coincide and are equal to the space of homogeneous polynomials of total degree $k$:
    $$
    \Span_{\BC} \{ \prod_{i=1}^r x_i^{a_i} | \sum_{i=1}^r a_i = k, a_i \in \BZ_{\geq 0}\},
    $$
    where $x_1,\dots, x_r$ denote the $\BR$-coordinates on $V$.
\end{lemt}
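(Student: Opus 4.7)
The plan is to identify $\BC\llbracket V\rrbracket$, endowed with its \Fre topology, with the infinite product $\prod_{n\geq 0}P_n$, where $P_n$ denotes the finite-dimensional space of homogeneous polynomials of total degree $n$ in the real coordinates $x_1,\dots,x_r$ on $V$. Since $t\in\BR^{\times}_{>0}$ acts on $V$ by scalar multiplication and therefore on $\BC\llbracket V\rrbracket$ by $(t\cdot f)(x)=f(t^{-1}x)$, this decomposition is $\BR^{\times}_{>0}$-stable and $t$ acts on $P_n$ by the scalar $t^{-n}$. After twisting by $\chi_k$, the action on $P_n\otimes\chi_k$ becomes $t\mapsto t^{k-n}$.

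By Proposition \ref{Prop: homology comparison}, together with the fact that the maximal compact subgroup of $\BR^{\times}_{>0}$ is trivial, the Schwartz homology reduces to the Lie-algebra homology $\oH_i(\BR,W)$ for $\BR=\Lie(\BR^{\times}_{>0})$, which is computed by the two-term Koszul complex
\[
0\longrightarrow W\xrightarrow{\;X\;}W\longrightarrow 0,
\]
where $X$ is the infinitesimal generator of $\BR^{\times}_{>0}$. Hence $\oH^{\CS}_1(\BR^{\times}_{>0},W)=\ker X$ and $\oH^{\CS}_0(\BR^{\times}_{>0},W)=W/XW$, provided the image is closed.

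Now set $W:=\BC\llbracket V\rrbracket\otimes\chi_k$. The operator $X$ acts diagonally on the grading by the scalar $k-n$ on the factor $P_n$, so its kernel is immediately $P_k$. For the cokernel, I would verify that the restriction of $X$ to the closed subspace $\prod_{n\neq k}P_n$ is a topological isomorphism: a two-sided inverse is supplied by the diagonal operator with entries $1/(k-n)$, and its continuity is automatic, since a linear map into a product is continuous if and only if each coordinate projection is, and here each coordinate is just multiplication by a fixed nonzero scalar. Consequently $XW=\prod_{n\neq k}P_n$ is a closed, complementary subspace, and $W/XW\cong P_k$.

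Combining the two calculations, both $\oH^{\CS}_0(\BR^{\times}_{>0},\BC\llbracket V\rrbracket\otimes\chi_k)$ and $\oH^{\CS}_1(\BR^{\times}_{>0},\BC\llbracket V\rrbracket\otimes\chi_k)$ coincide with the finite-dimensional space $P_k$, spanned by the monomials $\prod_{i=1}^r x_i^{a_i}$ with $\sum_i a_i=k$. The only step that is not pure bookkeeping is the verification that $XW$ is closed; this is dispatched by the explicit continuity of division by $k-n$ on each factor of the \Fre product, so I do not anticipate a genuine obstacle.
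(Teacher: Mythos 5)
The proposal is correct and is essentially the ``direct calculation'' the paper's one-line proof alludes to: decompose $\BC\llbracket V\rrbracket$ as the product $\prod_{n\ge 0}P_n$ of homogeneous pieces, observe that after twisting by $\chi_k$ the infinitesimal generator of $\BR^{\times}_{>0}$ acts on $P_n$ by the scalar $k-n$, and read off the kernel and cokernel of the two-term Koszul complex. Your check that the image is the closed subspace $\prod_{n\neq k}P_n$ (via the explicit continuous inverse on each graded piece) is exactly what is needed to pin down the topology, and the finite-dimensionality of $P_k$ then gives the stated conclusion.
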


\begin{proof}
    The lemma follows by direct calculation. 
\end{proof}

Using Lemma~\ref{Lem: Hausdorff LES} together with the long exact sequence (\ref{Eq: GL1 GLm RLES}), we obtain the following corollary.

\begin{cort}\label{Cor: R H0 Hausdorff}
    $\oH^{\CS}_{0}(\BR^{\times}_{>0}, S(V) \otimes \chi)$ is Hausdorff. 
\end{cort}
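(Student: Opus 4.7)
The plan is to deduce the Hausdorffness of $B := \oH^{\CS}_{0}(\BR^{\times}_{>0}, S(V) \otimes \chi)$ directly from Lemma \ref{Lem: Hausdorff LES} applied to three consecutive terms of the long exact sequence (\ref{Eq: GL1 GLm RLES}). Setting
\[
A := \oH^{\CS}_{0}(\BR^{\times}_{>0}, S(U) \otimes \chi), \qquad C := \oH^{\CS}_{0}(\BR^{\times}_{>0}, \BC \llbracket V \rrbracket \otimes \chi),
\]
the segment $A \to B \to C$ is consecutive in (\ref{Eq: GL1 GLm RLES}). By the lemma, to conclude that $B$ is Hausdorff it is enough to verify (i) $C$ is Hausdorff, (ii) $A$ is Hausdorff, and (iii) the kernel of $A \to B$ is closed in $A$.

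For (i), either $\chi$ is not one of the characters $\chi_k$, in which case the remark preceding Lemma \ref{Lem: GL1 GLm closed orbit finite dim} gives $C = 0$, or $\chi = \chi_k$ for some $k$, in which case $C$ is finite-dimensional by Lemma \ref{Lem: GL1 GLm closed orbit finite dim} and hence Hausdorff (Proposition \ref{Prop: finite hausdorff}). For (ii), the polar decomposition $U \cong \BR^{\times}_{>0} \times \BS^{r-1}$ exhibits $U$ as a trivial principal $\BR^{\times}_{>0}$-bundle over the compact sphere $\BS^{r-1}$; hence $S(U)$, and therefore $S(U) \otimes \chi$, is a relatively projective $\BR^{\times}_{>0}$-representation, so that $A$ is in fact a \fre space by Proposition \ref{Prop: projective hausdorff}, and in particular Hausdorff.

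For (iii), exactness of (\ref{Eq: GL1 GLm RLES}) identifies the kernel of $A \to B$ with the image of the connecting homomorphism
\[
\delta: \oH^{\CS}_{1}(\BR^{\times}_{>0}, \BC \llbracket V \rrbracket \otimes \chi) \longrightarrow A.
\]
The source of $\delta$ is either zero (when $\chi \neq \chi_k$) or finite-dimensional (by Lemma \ref{Lem: GL1 GLm closed orbit finite dim}), so its image is a finite-dimensional subspace of the Hausdorff space $A$, hence automatically closed. Combining (i)--(iii) with Lemma \ref{Lem: Hausdorff LES} yields the corollary. Since every ingredient has been assembled in the preceding discussion, I do not anticipate any serious obstacle; the only minor point to verify is that twisting a relatively projective $\BR^{\times}_{>0}$-representation by a one-dimensional character preserves relative projectivity, which follows at once from the definition.
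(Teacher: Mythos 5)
Your proof is correct and follows essentially the same route as the paper: both apply Lemma \ref{Lem: Hausdorff LES} to the segment $\oH^{\CS}_{0}(\BR^{\times}_{>0}, S(U)\otimes\chi) \to \oH^{\CS}_{0}(\BR^{\times}_{>0}, S(V)\otimes\chi) \to \oH^{\CS}_{0}(\BR^{\times}_{>0}, \BC\llbracket V\rrbracket\otimes\chi)$ of the long exact sequence (\ref{Eq: GL1 GLm RLES}), using relative projectivity of $S(U)\otimes\chi$ for Hausdorffness of the source, finite-dimensionality of the closed-orbit terms for Hausdorffness of the target, and finite-dimensionality of $\Im\,\delta$ for closedness of the kernel. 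The only cosmetic difference is that you spell out the $\chi = \chi_k$ versus $\chi \neq \chi_k$ dichotomy where the paper simply cites the discussion around Lemma \ref{Lem: GL1 GLm closed orbit finite dim}.
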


\begin{proof}
    According to \cite[Proposition 5.5]{CHEN2021108817}, $S(U) \otimes \chi$ is a relative projective representation of $\BR^{\times}_{>0}$. Hence, by Proposition \ref{Prop: projective hausdorff}, $\oH^{\CS}_{0}(\BR^{\times}_{>0}, S(U) \otimes \chi)$ is Hausdorff.

    Consider the exact sequence (\ref{Eq: GL1 GLm RLES})
    $$
    \oH^{\CS}_{1}(\BR^{\times}_{>0}, \BC \llbracket V \rrbracket \otimes \chi)  \xrightarrow{\delta} \oH^{\CS}_{0}(\BR^{\times}_{>0}, S(U) \otimes \chi) 
    \xrightarrow{} \oH^{\CS}_{0}(\BR^{\times}_{>0}, S(V) \otimes \chi) \xrightarrow{} \oH^{\CS}_{0}(\BR^{\times}_{>0}, \BC \llbracket V \rrbracket \otimes \chi).
    $$
    According to Lemma \ref{Lem: GL1 GLm closed orbit finite dim} and the discussion preceding it, the first and the last terms are finite-dimensional, and hence Hausdorff by Proposition \ref{Prop: finite hausdorff}. Since $\Im \, \delta$ is finite-dimensional, it is a closed subspace of $\oH^{\CS}_{0}(\BR^{\times}_{>0}, S(U) \otimes \chi)$. 
    Therefore, the corollary follows from Lemma \ref{Lem: Hausdorff LES}.
\end{proof}

We now turn to the $\oH^{\CS}_{1}$ term.
\begin{lemt}\label{Lem: GL1GLm R higher vanish}
    $$
    \oH^{\CS}_{1}(\BR^{\times}_{>0}, S(V) \otimes \chi) = 0
    $$
\end{lemt}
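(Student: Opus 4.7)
The plan is to identify $\oH^{\CS}_{1}(\BR^{\times}_{>0},\, S(V)\otimes\chi)$ with the kernel of an explicit first-order differential operator on $S(V)$, and then to show this kernel is trivial by a classical homogeneity-versus-decay argument.

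Since $\BR^{\times}_{>0}$ is connected with trivial maximal compact subgroup, Proposition \ref{Prop: homology comparison} identifies Schwartz homology with Lie algebra homology. Writing $\g=\BC$ for the complexified Lie algebra, the Koszul complex reduces to $0\to S(V)\otimes\chi\xrightarrow{\,X\,}S(V)\otimes\chi\to 0$, so
\[
\oH^{\CS}_{1}(\BR^{\times}_{>0},\, S(V)\otimes\chi)\;\cong\;\Ker\bigl(X\colon S(V)\otimes\chi\longrightarrow S(V)\otimes\chi\bigr),
\]
where $X$ denotes a generator of $\g$. To compute the action of $X$, parametrize $\BR^{\times}_{>0}$ by $t=e^{s}$ and set $\lambda:=(d\chi)(X)\in\BC$. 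Since the twisted action reads $(t\cdot\phi)(x)=\chi(t)\phi(t^{-1}x)$, differentiation at $s=0$ yields
\[
X\cdot\phi\;=\;\lambda\,\phi\,-\,E\phi,\qquad E:=\sum_{i} x_{i}\partial_{x_{i}},
\]
where $E$ is the Euler vector field on $V$ regarded as a real vector space. Hence the lemma reduces to showing that $\Ker(E-\lambda)\cap S(V)=\{0\}$.

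For this last step, suppose $\phi\in S(V)$ satisfies $E\phi=\lambda\phi$. For any fixed $x\in V\setminus\{0\}$, set $f(t):=\phi(tx)$ for $t>0$; a direct calculation gives $tf'(t)=(E\phi)(tx)=\lambda f(t)$, whence $\phi(tx)=\phi(x)\,t^{\lambda}$ for all $t>0$. If $\phi(x)\neq 0$, then $|\phi(tx)|=t^{\Re\lambda}|\phi(x)|$ either fails to vanish as $t\to\infty$ (when $\Re\lambda\geq 0$) or fails to remain bounded as $t\to 0^{+}$ (when $\Re\lambda<0$); both contradict the Schwartz property of $\phi$. Hence $\phi(x)=0$ for every $x\neq 0$, and continuity at the origin forces $\phi(0)=0$, so $\phi\equiv 0$.

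The only technically delicate point is the infinitesimal action computation, which is routine; the subsequent homogeneity-versus-decay argument is uniform in $\chi$, so no case-split on whether $\chi=\chi_{k}$ is needed.
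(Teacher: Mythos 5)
Your proof is correct, and it takes a genuinely different route from the paper's. The paper argues via the stratification $V = U \sqcup \{0\}$ with $U := V \setminus \{0\}$: since $S(U)\otimes\chi$ is relatively projective over $\BR^{\times}_{>0}$, the associated long exact sequence identifies $\oH^{\CS}_{1}(\BR^{\times}_{>0}, S(V)\otimes\chi)$ with the kernel of the connecting map
\[
\delta\colon \oH^{\CS}_{1}(\BR^{\times}_{>0}, \BC\llbracket V\rrbracket\otimes\chi) \longrightarrow \oH^{\CS}_{0}(\BR^{\times}_{>0}, S(U)\otimes\chi),
\]
and after disposing of the easy case $\chi\neq\chi_{k}$ (where the $\BC\llbracket V\rrbracket$ term already vanishes by Borel's Lemma), it checks injectivity of $\delta$ on the space of degree-$k$ homogeneous polynomials by an explicit diagram chase. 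You instead pass directly to Lie algebra homology via Proposition~\ref{Prop: homology comparison}, observe that for the one-dimensional abelian $\g$ the first Koszul homology is just $\Ker(E-\lambda)$ on $S(V)$, and kill this kernel with the homogeneity-versus-decay argument (Schwartz decay at infinity handles $\Re\lambda\geq 0$, and continuity at the origin handles $\Re\lambda<0$). Your route is shorter, is uniform in $\chi$ with no case-split, and does not require the explicit form of the connecting map or the formal power series computation of Lemma~\ref{Lem: GL1 GLm closed orbit finite dim} for this particular vanishing. The paper's route, on the other hand, reuses machinery that is needed immediately afterward anyway: the same short exact sequence and the analysis of the open and closed strata drive Corollary~\ref{Cor: R H0 Hausdorff}, the exact sequence~(\ref{Eq: GL1 GLm DLES}), and Proposition~\ref{Prop: GL1GLm H0}, so handling $\oH^{\CS}_1$ inside the same framework avoids a standalone computation.
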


\begin{proof}
    Consider the exact sequence (\ref{Eq: GL1 GLm RLES}).
    When $\chi \neq \chi_k$, the lemma follows immediately from the discussion preceding Lemma \ref{Lem: GL1 GLm closed orbit finite dim}. 
    Hence, it remains to treat the case $\chi = \chi_k$. 
    It suffices to verify that the connecting map $\delta$ in (\ref{Eq: GL1 GLm RLES}) is injective, which will be done by a diagram chase:
    \[
    \begin{tikzcd}
        & \t \otimes S(V) \otimes \chi_k \arrow[r] \arrow[d] & \t \otimes \BC \llbracket V \rrbracket \otimes \chi_k  \\
        S(U) \otimes \chi_k \arrow[r] & S(V) \otimes \chi_k &      
    \end{tikzcd}.
    \]
    The map $\oH^{\CS}_{1}(\BR^{\times}_{>0}, \BC \llbracket V \rrbracket \otimes \chi_k) \xrightarrow{\delta} \oH^{\CS}_{0}(\BR^{\times}_{>0}, S(U) \otimes \chi) = S(\BS^{r-1})$ is then given by
    $$
    \prod_{i=1}^r x_i^{d_i} \mapsto F(a_1,\cdots, a_r) = -\frac{1}{k!} \prod_{i=i}^r a_i^{d_i}.
    $$
    Since the image of every element is a homogeneous polynomial, it cannot vanish identically. Thus, the connecting map $\delta$ in (\ref{Eq: GL1 GLm RLES}) is injective.
\end{proof}

\begin{cort}\label{Cor: GL1GLm higher vanish}
    Let $\pi \in\Rep_{G_1}^{\mathrm{CW}}$. Then for every $i\geq 1$, one has
    $$
    \oH^{\CS}_{i}(\RD^{\times}, S(V) \otimes \pi) = 0.
    $$
\end{cort}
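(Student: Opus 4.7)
The plan is to reduce Corollary~\ref{Cor: GL1GLm higher vanish} to the character case established in Lemma~\ref{Lem: GL1GLm R higher vanish} by (i) using the polar decomposition (\ref{Eq: D decomposition}) to split off the compact factor $K_{\RD}$ of $\RD^{\times}$, and (ii) inducting on the length of $\pi$ as a Casselman--Wallach representation.

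First, by Proposition~\ref{Prop: homology comparison}, $\oH^{\CS}_{i}(\RD^{\times}, -)$ is computed by the Koszul complex on $\wedge^{\bullet}(\g/\k)$, where $\g$ and $\k$ denote the complexified Lie algebras of $\RD^{\times}$ and $K_{\RD}$ respectively. The decomposition (\ref{Eq: D decomposition}) identifies $\RD^{\times}/K_{\RD}$ with the one-dimensional group $\BR^{\times}_{>0}$, so $\dim_{\BC}(\g/\k)=1$ and $\oH^{\CS}_{i}(\RD^{\times}, -)=0$ automatically for all $i\geq 2$. It remains to handle the case $i=1$.

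For $i=1$, I induct on the length of $\pi$. Since $S(V)$ is a nuclear Fr\'echet space, tensoring a short exact sequence $0\to\pi'\to\pi\to\pi''\to 0$ in $\Rep_{G_{1}}^{\CWR}$ with $S(V)$ preserves exactness, and the resulting long exact sequence of Schwartz homology reduces the vanishing for $\pi$ to that for $\pi'$ and $\pi''$. Hence I may assume $\pi$ is irreducible. From the direct-product decomposition $\RD^{\times}=K_{\RD}\times\BR^{\times}_{>0}$, every irreducible Casselman--Wallach representation of $\RD^{\times}$ is finite-dimensional and of the form $\pi=\tau\otimes\chi$, where $\tau$ is an irreducible representation of $K_{\RD}$ and $\chi$ is a character of $\BR^{\times}_{>0}$ (in the abelian cases $\RD=\BR$ or $\BC$, one has $\dim\tau=1$). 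In particular, $\pi|_{\BR^{\times}_{>0}}$ is isomorphic to $(\dim\tau)$ copies of $\chi$.

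Since $K_{\RD}$ and $\BR^{\times}_{>0}$ commute in the direct product and $K_{\RD}$ is compact (so that taking $K_{\RD}$-invariants is an exact functor), the Koszul complex computing $\oH^{\CS}_{*}(\RD^{\times}, S(V)\otimes\pi)$ is precisely the $K_{\RD}$-invariants of the Koszul complex computing $\oH^{\CS}_{*}(\BR^{\times}_{>0}, S(V)\otimes\pi)$. Passing to homology yields
\[
\oH^{\CS}_{1}(\RD^{\times}, S(V)\otimes\pi)\;\cong\;\bigl(\oH^{\CS}_{1}(\BR^{\times}_{>0}, S(V)\otimes\pi)\bigr)^{K_{\RD}},
\]
and the right-hand side equals $\bigl(\oH^{\CS}_{1}(\BR^{\times}_{>0}, S(V)\otimes\chi)\bigr)^{\oplus\dim\tau}$, which vanishes by Lemma~\ref{Lem: GL1GLm R higher vanish}. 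The only mild subtlety is the exchange of $K_{\RD}$-invariants with Schwartz homology; I handle this directly at the level of the Koszul complex rather than via a Hochschild--Serre spectral sequence, which would otherwise require verifying Hausdorffness hypotheses essentially equivalent to what we are trying to prove.
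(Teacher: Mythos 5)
Your proof is correct and follows essentially the same route as the paper: both exploit the decomposition $\RD^{\times}=K_{\RD}\times\BR^{\times}_{>0}$, use the acyclicity coming from compactness of $K_{\RD}$ (Proposition~\ref{Prop: projective hausdorff}), and reduce to Lemma~\ref{Lem: GL1GLm R higher vanish} for the $\BR^{\times}_{>0}$ factor. The only difference is that the paper cites ``the standard spectral sequence argument'' while you implement the same degeneration directly at the level of the $(\g,K)$-Koszul complex, which is a legitimate (and arguably cleaner) way to sidestep any Hausdorffness hypotheses; one tiny imprecision is that $\bigl(\oH^{\CS}_{1}(\BR^{\times}_{>0}, S(V)\otimes\pi)\bigr)^{K_{\RD}}$ is a priori a subspace of, not equal to, $\oH^{\CS}_{1}(\BR^{\times}_{>0}, S(V)\otimes\chi)^{\oplus\dim\tau}$, but since the latter vanishes this does not affect the conclusion.
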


\begin{proof}
    Using the standard spectral sequence argument with respect to the group decomposition (\ref{Eq: D decomposition}), this follows from Lemma \ref{Lem: GL1GLm R higher vanish} and Proposition \ref{Prop: projective hausdorff}.
\end{proof}

Hence, for $\pi \in\Rep_{G_1}^{\mathrm{CW}}$, there is a long exact sequence
    \begin{multline}\label{Eq: GL1 GLm DLES}
        0 \xrightarrow{} \oH^{\CS}_{1}(\RD^{\times}, \BC \llbracket V \rrbracket \otimes \pi)  \xrightarrow{} \oH^{\CS}_{0}(\RD^{\times}, S(U) \otimes \pi)\xrightarrow{} \oH^{\CS}_{0}(\RD^{\times}, S(V) \otimes \pi) \\
        \xrightarrow{} \oH^{\CS}_{0}(\RD^{\times}, \BC \llbracket V \rrbracket \otimes \pi) \xrightarrow{} 0.
    \end{multline}
Based on Lemma~\ref{Lem: GL1 GLm closed orbit finite dim} and the preceding discussion, the spectral sequence arising from the decomposition~\eqref{Eq: D decomposition} shows that the first and last terms are finite-dimensional.

\begin{prpt}\label{Prop: GL1GLm H0}
    Let $\pi\in\Rep_{G_1}^{\mathrm{CW}}$. Then for every $m\geq 1$, one has
    \[
    \oH_{0}^{\CS}(\RD^{\times},S(V)\otimeshat\pi) \in \Rep_{H_m}^{\mathrm{CW}}.
    \]
\end{prpt}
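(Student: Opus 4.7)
The plan is to first identify the middle term $B := \oH^\CS_0(\RD^\times, S(U) \otimeshat \pi)$ of the long exact sequence (\ref{Eq: GL1 GLm DLES}) as a parabolically induced representation of $H_m$, and then use (\ref{Eq: GL1 GLm DLES}) together with Lemma~\ref{Lem: Hausdorff LES} to transfer the Casselman--Wallach property to $\oH^\CS_0(\RD^\times, S(V) \otimeshat \pi)$.

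\textbf{Step 1: identifying the middle term.} The complement $U = V \setminus \{0\}$ is a single $H_m$-orbit: the stabilizer of $v_0 = (1, 0, \ldots, 0) \in V$ is the mirabolic subgroup $P_{\mathrm{mir}}$, which sits inside the standard parabolic $Q_1 \subset H_m$ of type $(1, m-1)$, with $Q_1/P_{\mathrm{mir}} \cong \RD^\times$ realized by $q \mapsto q_{11}$. Thus $U$ is the total space of an $H_m$-equivariant principal $\RD^\times$-bundle
\[
U \longrightarrow \BP^{m-1}(\RD) = Q_1 \backslash H_m.
\]
Applying Lemma~\ref{Lem: Schwartz Fibre bundle} (or the geometric interpretation~(\ref{Eq: geometric of schwartz ind})) together with induction in stages, we write $S(U) \otimeshat \pi$ as an $(\RD^\times \times H_m)$-representation Schwartz-induced from $\RD^\times \times Q_1$. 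Lemma~\ref{Lem: homology commute ind} then lets us swap the $\RD^\times$-coinvariants past the $H_m$-induction, and Lemma~\ref{Lem: GL action on coinv} computes the resulting inner coinvariants $(S(\RD^\times) \otimeshat \pi)_{\RD^\times}$ as $\pi$ itself. This yields an $H_m$-equivariant identification
\[
B \;\cong\; \Ind_{Q_1}^{H_m}(\pi_{Q_1}),
\]
where $\pi_{Q_1}$ denotes $\pi$ inflated to $Q_1$ via the character $q \mapsto q_{11}$ (up to a modular character twist). Since $\RD^\times$ admits only finite-dimensional irreducible Casselman--Wallach representations, $\pi$ (and hence $\pi_{Q_1}$) is finite-dimensional; combined with the compactness of $Q_1 \backslash H_m = \BP^{m-1}(\RD)$, this exhibits $\Ind_{Q_1}^{H_m}(\pi_{Q_1})$ as a principal series representation of $H_m$, which is Casselman--Wallach.

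\textbf{Step 2: passing from $B$ to $C$.} Write $C := \oH^\CS_0(\RD^\times, S(V) \otimeshat \pi)$ and display (\ref{Eq: GL1 GLm DLES}) as
\[
0 \to A_1 \xrightarrow{\alpha} B \xrightarrow{\beta} C \xrightarrow{\gamma} B_2 \to 0.
\]
By the remark following (\ref{Eq: GL1 GLm DLES}), $A_1$ and $B_2$ are finite-dimensional, hence Hausdorff and Casselman--Wallach; $B$ is Casselman--Wallach by Step 1. Lemma~\ref{Lem: CW closed image} gives that $\mathrm{im}(\alpha) = \ker(\beta)$ is closed in $B$. Applying Lemma~\ref{Lem: Hausdorff LES} to the three consecutive terms $B \to C \to B_2$ (with $B_2$ Hausdorff) shows that $C$ is Hausdorff and that $\mathrm{im}(\beta)$ is closed in $C$. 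The long exact sequence then splits into two short exact sequences of smooth Fr\'echet $H_m$-representations of moderate growth:
\[
0 \to A_1 \to B \to \mathrm{im}(\beta) \to 0, \qquad 0 \to \mathrm{im}(\beta) \to C \to B_2 \to 0.
\]
By Lemma~\ref{Lem: CW closed image}, $\mathrm{im}(\beta) \cong B/\mathrm{im}(\alpha)$ lies in $\Rep_{H_m}^{\mathrm{CW}}$; therefore $C$ is an extension of Casselman--Wallach representations by a Casselman--Wallach representation, and is itself Casselman--Wallach (finite length, smoothness, moderate growth, and the Fr\'echet property all being preserved under such extensions).

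The main obstacle is Step 1: verifying the $H_m$-equivariance and the precise modular character corrections in the identification $B \cong \Ind_{Q_1}^{H_m}(\pi_{Q_1})$. The natural coordinate decomposition $U \cong \BR_{>0} \times S$ used to analyze the $\RD^\times$-action is not $H_m$-equivariant, and matching the resulting bare-vector-space computation of $\RD^\times$-coinvariants with the $H_m$-structure coming from the bundle $U \times_{\RD^\times} \pi$ over $Q_1 \backslash H_m$ requires careful bookkeeping of the diagonal actions.
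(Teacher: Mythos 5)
Your proposal is correct and follows essentially the same route as the paper: identify $\oH^\CS_0(\RD^\times, S(U)\otimeshat\pi)$ as a degenerate principal series of $H_m$ via Lemmas~\ref{Lem: homology commute ind} and \ref{Lem: GL action on coinv}, then apply Lemma~\ref{Lem: Hausdorff LES} together with the finite-dimensionality of the two end terms of (\ref{Eq: GL1 GLm DLES}). One inessential slip: with the right action $x\mapsto xh$ on row vectors, the stabilizer of $(1,0,\dots,0)$ lies in the opposite parabolic $\overline{Q_1}$, not $Q_1$ (the paper accordingly writes $\ind_{\overline{Q_1}}^{H_m}(\pi\boxtimes\BC)$), but this does not affect the conclusion since the resulting induced representation is Casselman--Wallach either way.
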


\begin{proof}
    Denote by $\overline{Q_1}$ the opposite parabolic subgroup of $Q_1$ in $H_m$.
    Note that
    $$
    \oH^{\CS}_{0}(\RD^{\times}, S(U) \otimes \pi) = \oH^{\CS}_{0}(\RD^{\times}, \ind_{G_1 \times \overline{Q_1}}^{G_1 \times H_m}( \rho_1 \boxtimes \BC )\otimes \pi) = \ind_{\overline{Q_1}}^{H_m}( \pi \boxtimes \BC),
    $$
    where the first equality is obtained by direct calculation, and the second is a consequence of Lemmas \ref{Lem: homology commute ind} and \ref{Lem: GL action on coinv}.
    The proposition now follows from Lemma \ref{Lem: Hausdorff LES}, the long exact sequence (\ref{Eq: GL1 GLm DLES}), and the discussion following (\ref{Eq: GL1 GLm DLES}).
\end{proof}

Combining Corollary \ref{Cor: GL1GLm higher vanish} with Proposition \ref{Prop: GL1GLm H0}, we complete the proof of Theorem \ref{ThmA} for dual pairs $(\GL_1, \GL_m)$.

\subsection{Corank-one filtration}\label{Subsec: corank1 Filtration Type II}
In this subsection, we introduce the corank\mbox{-}one filtration for Type~II dual pairs, namely a $P_1 \times H_m$-stable filtration on the Weil representation.

We write elements of $M_{n,m}$ in column-block form as $\begin{bmatrix} a\\ b \end{bmatrix}$, where $a \in M_{1,m}, b \in M_{n-1,m}$.
For $f\in S(M_{n,m})$, we define the partial Fourier transform in the $M_{1,m}$-variable by
\[
\widehat f\left(\begin{bmatrix} a\\ b\end{bmatrix}\right)
:=\int_{M_{1,m}} f\left(\begin{bmatrix} x\\ b\end{bmatrix}\right)
\psi(\Trd(\ta x))\,dx.
\]
Conjugating the natural action \(\sigma_{n,m}\) by this partial Fourier transform yields an equivalent model $\sigma^*_{n,m}$ of the \(G_n\times H_m\)-action on \(S(M_{n,m})\).
This realization is more convenient for our purposes.  Below, we record several explicit formulas for \(\sigma^*_{n,m}\).  
\begin{equation}\label{Eq: Type II reductive action}
    \sigma^{*}_{n,m}\left(\begin{bmatrix}
    g_1 & \\ & g_2
    \end{bmatrix},h\right) f\left(\begin{bmatrix}
    a\\b
    \end{bmatrix}\right) = \nu(g_1)^{m} \nu(h)^{-1} f\left(\begin{bmatrix}
    \prescript{t}{}{g}_1 \cdot a \cdot \prescript{t}{}{h}^{-1}\\g_2^{-1} \cdot b \cdot h
    \end{bmatrix}\right),
\end{equation}
where $g_1 \in G_1$, $g_2 \in G_{n-1}$, and $h \in H_m$.

\begin{equation}\label{Eq: Type II unipotent action}
    \sigma^{*}_{n,m}\left(\begin{bmatrix}
    1&u\\ &1
    \end{bmatrix}, 1\right) f\left(\begin{bmatrix}
    a\\b
    \end{bmatrix}\right) = \psi(\Trd(b \cdot \ta \cdot u)) f\left(\begin{bmatrix}
    a\\b
    \end{bmatrix}\right),
\end{equation}
where $u \in M_{1, n-1}$.

Let $M := M_{n,m}$, 
$Z := \left\{
\begin{bmatrix}
0\\b
\end{bmatrix}
\in M_{n,m} | b \in M_{n-1,m}\right\}$, $U := M\setminus Z$.
The space $S(M)$ contains a $P_1 \times H_m$ stable subspace $S(U)$.
Set $S_{Z}(M) := S(M) / S(U)$. Then we have 
    \begin{equation}\label{Eq: Corank1 filtration II}\tag{F-II}
        0 \xrightarrow{} S(U)  \xrightarrow{} S(M)  \xrightarrow{} S_{Z}(M) \xrightarrow{} 0.
    \end{equation}
Denote $V := M_{1,m}$ and view it as a real manifold. Note that $S_{Z}(M) \cong \BC \llbracket V \rrbracket \otimeshat S(M_{n-1,m})$.

Denote by $\n_1$ the complexified Lie algebra of $N_1$.
We first study the $\n_1$-homology of the open piece.
The following lemma holds.

\begin{lemt}\label{Lem: Type II filtration open}
    \[
    \oH_{i}(\n_1, S(U))
    =
    \left\{
    \begin{array}{ll} 
    \ind^{L_1 \times H_m}_{L_1 \times Q_1}(\lambda_{1,1} \otimes \rho_1 \otimeshat \sigma_{n-1, m-1}) & \text{if } i=0, \\
    0 & \text{otherwise},
    \end{array}
    \right.
    \]
    Here:
    \begin{itemize}
        \item $G_1 \times H_1$ acts on $\rho_1$;
        \item $G_{n-1} \times H_{m-1}$ acts on $\sigma_{n-1,m-1}$;
        \item $\lambda_{1,1}$ is the character defined by
                \[\lambda_{1,1} =
                    \begin{cases}
                    \nu^{m} & \text{on } G_{1} \\
                    \nu^{-1} & \text{on } Q_{1} 
                    \end{cases}.\]
    \end{itemize}
\end{lemt}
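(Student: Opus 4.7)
The plan is in two steps: apply Lemma~\ref{Lem: Gourevitch Lemma} to obtain vanishing of the higher $\n_1$-homology and to identify $\oH_0(\n_1, S(U))$ as a space of Schwartz sections, and then realize the latter as an induced representation via Lemma~\ref{Lem: Schwartz Fibre bundle}.

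For the first step, formula~\eqref{Eq: Type II unipotent action} shows that $N_1 \cong M_{1, n-1}$ acts on $S(U)$ purely by multiplication by the character $u \mapsto \psi(\Trd(b\ta u))$. I would set up Lemma~\ref{Lem: Gourevitch Lemma} with the Nash map $\phi : U \to \n_1^*$ defined by $\phi(a,b)(u) := \Trd(b\ta u)$, rescaled to absorb the normalization constant of $\psi$ relative to $\exp(\pi\sqrt{-1}\Re(\cdot))$. To verify that $0$ is a regular value, observe that at any $(a,b) \in X_0 := \phi^{-1}(0) = \{(a,b) \in U : b\ta = 0\}$ one has $a \neq 0$, and the partial differential in the $b$-direction alone factors as $\delta b \mapsto \delta b \ta \mapsto (u \mapsto \Trd(\delta b \, \ta \, u))$, which is the composition of a surjection $M_{n-1,m} \twoheadrightarrow M_{n-1,1}$ (by right-multiplication by the nonzero $\ta$) with the identification $M_{n-1,1} \cong \n_1^*$ given by the non-degenerate trace pairing. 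The lemma then yields $\oH_i(\n_1, S(U)) = 0$ for $i \geq 1$ and a topological isomorphism
\[
\oH_0(\n_1, S(U)) \cong \Gamma^\CS(X_0, \mathcal{E}|_{X_0}),
\]
where $\mathcal{E}$ denotes the trivial line bundle on $U$ equipped with the $L_1 \times H_m$-equivariant structure from the scalar $\nu(g_1)^m \nu(h)^{-1}$ in~\eqref{Eq: Type II reductive action}.

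For the second step, I would realize $X_0$ as an $H_m$-equivariant fibre bundle over $H_m/Q_1$. The condition $b\ta = 0$ depends only on $[a]\in\RD^\times\backslash(M_{1,m}\setminus\{0\}) \cong H_m/Q_1$, since scaling $a \mapsto \lambda a$ scales $\ta$ by $\bar\lambda$ from the right. The map $(a,b) \mapsto [a]$ therefore exhibits $X_0$ as a fibre bundle with fibre over $[e_1]$ given by
\[
X' := \{(\lambda e_1, [0\ c]) : \lambda \in \RD^\times,\ c \in M_{n-1,m-1}\} \cong \RD^\times \times M_{n-1, m-1}.
\]
A direct calculation confirms that the stabilizer of $X'$ in $L_1 \times H_m$ under the right action induced from~\eqref{Eq: Type II reductive action} is exactly $L_1 \times Q_1$; this puts $X_0$ in the form $X' \times_{L_1 \times Q_1}(L_1 \times H_m)$ required by Lemma~\ref{Lem: Schwartz Fibre bundle}, which then gives
\[
\Gamma^\CS(X_0, \mathcal{E}|_{X_0}) \cong \ind^{L_1 \times H_m}_{L_1 \times Q_1} \Gamma^\CS(X', \mathcal{E}|_{X'}).
\]

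Finally, I would identify $\Gamma^\CS(X', \mathcal{E}|_{X'})$ with $\lambda_{1,1} \otimes \rho_1 \otimeshat \sigma_{n-1, m-1}$ as $L_1 \times Q_1$-representations. Using $e_1 \prescript{t}{}{h}^{-1} = \prescript{t}{}{h_{11}}^{-1} e_1$ for $h \in Q_1$, the right action of $(g_1, g_2, h) \in L_1 \times Q_1$ on $(\lambda, c) \in X'$ computes to $(\prescript{t}{}{g_1} \lambda \prescript{t}{}{h_{11}}^{-1},\, g_2^{-1} c h_{22})$, while the fibre scalar evaluates to $\nu(g_1)^m \nu(h_{11})^{-1}\nu(h_{22})^{-1} = \lambda_{1,1}(g_1, g_2, h)$. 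The $(g_2, h_{22})$-action on the $c$-factor matches $\sigma_{n-1, m-1}$ directly; the $(g_1, h_{11})$-action on the $\lambda$-factor matches $\rho_1$ after composing with the automorphism $\iota(g) := \prescript{t}{}{g}^{-1}$ of $\RD^\times$, and the map $T\phi(x) := \phi(\iota(x))$ intertwines $\rho_1 \circ (\iota \times \iota)$ with $\rho_1$. The main bookkeeping difficulty lies precisely here: carefully tracking how the transposition $\prescript{t}{}{(\cdot)}$ inherited from the Fourier-transformed model $\sigma^*_{n,m}$ interacts with the $Q_1$-Levi decomposition to reproduce exactly $\rho_1 \otimeshat \sigma_{n-1,m-1}$ and the character twist $\lambda_{1,1}$.
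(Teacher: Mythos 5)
Your proposal is correct and follows essentially the same route as the paper: apply Lemma~\ref{Lem: Gourevitch Lemma} with $\phi(a,b)(u)=\Trd(b\,\ta\,u)$ to reduce to $S(\CN\setminus Z)$, then invoke Lemma~\ref{Lem: Schwartz Fibre bundle} to exhibit it as the stated induced representation. The only difference is that the paper defers the final bookkeeping to \cite[Section~3]{Minguez2008typeII}, whereas you carry out the fibre-bundle identification, the regularity check, and the matching of the fibre action with $\lambda_{1,1}\otimes\rho_1\otimeshat\sigma_{n-1,m-1}$ explicitly.
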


\begin{proof}
    Define
    $$
    \CN := \left\{
        \begin{bmatrix}
        a\\b
        \end{bmatrix}
        \in M_{n,m} | a \in M_{1,m}, b \in M_{n-1,m}, b \cdot \ta = 0\right\}.
    $$
    Consider the map 
    $$
    \begin{array}{rrcl}
    \phi: & U &\to& N_1^*\\
    &\begin{bmatrix}
    a\\b
    \end{bmatrix} &\mapsto&
    (u \mapsto \Trd(b \cdot \ta \cdot u))
    \end{array}.
    $$
    One can verify directly that $0$ is a regular value of $\phi$, and $\phi^{-1}(0) = \CN \setminus Z$.
    Following Lemma \ref{Lem: Gourevitch Lemma}, we have
    \[
    \oH_{i}(\n_1, S(U))
    =
    \left\{
    \begin{array}{ll} 
    S(\CN \setminus Z)& \text{if } i=0, \\
    0 & \text{otherwise}.
    \end{array}
    \right.
    \]
    Then, together with Lemma \ref{Lem: Schwartz Fibre bundle}, the proposition follows from the same formal calculation as in \cite[Section 3]{Minguez2008typeII}.
\end{proof}

Now we turn to the closed piece.
Recall that $S_{Z}(M) \cong \BC \llbracket V \rrbracket \otimeshat S(M_{n-1,m})$. It admits a decreasing filtration by degree $S_{Z}(M)_k := \BC \llbracket V \rrbracket_{\geq k} \otimeshat S(M_{n-1,m})$. The associated graded pieces are given by 
\begin{equation*}\label{Eq: closed filtration II}
    S_{Z}(M)_k / S_{Z}(M)_{k+1} = F_k \otimes S(M_{n-1,m}),
\end{equation*}
where $F_k$ is the space of homogeneous polynomials on $V$ of total degree $k$.

\begin{lemt}\label{Lem: Type II filtration closed}
    As a representation of $P_1 \times H_m$,
    \begin{equation*}
    S_{Z}(M)_k / S_{Z}(M)_{k+1} \cong \lambda_{1,0} \otimes F_k \otimes \sigma_{n-1,m}.
    \end{equation*}
     Here:
    \begin{itemize}
        \item $N_1$ acts trivially;
        \item $G_{n-1} \times H_{m}$ acts on $\sigma_{n-1,m}$;
        \item $F_k$ carries a $G_{1} \times H_{m}$-action induced from the following action on $V$:
        \[
        (g_1,h)\cdot v = \prescript{t}{}{g}_1^{-1} \cdot v \cdot \prescript{t}{}{h}, \quad v \in V;
        \]
        \item $\lambda_{1,0}$ is the character defined by
                \[\lambda_{1,0} =
                    \begin{cases}
                    \nu^{m} & \text{on } G_{1} \\
                    \nu^{-1} & \text{on } H_{m} 
                    \end{cases}.\]
    \end{itemize}
\end{lemt}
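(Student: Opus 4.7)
The plan is to verify the lemma as essentially a direct computation using the explicit formulas \eqref{Eq: Type II reductive action} and \eqref{Eq: Type II unipotent action} for $\sigma_{n,m}^*$, together with the identification $S_Z(M) \cong \BC\llbracket V\rrbracket \otimeshat S(M_{n-1,m})$ recorded just before the lemma. Under this identification, the filtration $S_Z(M)_k = \BC\llbracket V\rrbracket_{\geq k} \otimeshat S(M_{n-1,m})$ corresponds to the order of vanishing of the Taylor expansion in the $a$-variable along $Z$, and the quotient $S_Z(M)_k/S_Z(M)_{k+1}$ is canonically isomorphic to $F_k \otimes S(M_{n-1,m})$ by passing to the $k$-th order symbol.

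First I would check that the filtration is $P_1 \times H_m$-stable. For the Levi part $L_1 \times H_m$, the substitution $a \mapsto \prescript{t}{}{g}_1 \cdot a \cdot \prescript{t}{}{h}^{-1}$ appearing in \eqref{Eq: Type II reductive action} is $\RD$-linear in $a$, so it preserves the order of vanishing at $a=0$. For the unipotent part $N_1$, the formula \eqref{Eq: Type II unipotent action} shows that $u \in N_1$ acts by multiplication by $\psi(\Trd(b \cdot \ta \cdot u))$, which equals $1$ when $a=0$; hence this multiplier equals $1 + O(a)$ in the Taylor expansion, and multiplication by it maps $S_Z(M)_k$ into itself.

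Next, I would read off the action on the graded piece. The $N_1$-action is trivial because $\psi(\Trd(b \cdot \ta \cdot u)) - 1 \in O(a)$, which shifts the filtration index up by at least one; hence it vanishes on $S_Z(M)_k/S_Z(M)_{k+1}$. The Levi action on the $F_k$-factor is obtained from the substitution $a \mapsto \prescript{t}{}{g}_1 \cdot a \cdot \prescript{t}{}{h}^{-1}$: if $P \in F_k$ is viewed as a degree-$k$ polynomial on $V$, then this substitution sends $P(v)$ to $P(\prescript{t}{}{g}_1 \cdot v \cdot \prescript{t}{}{h}^{-1})$, which is exactly the action on polynomials induced by the linear action $(g_1,h) \cdot v = \prescript{t}{}{g}_1^{-1} v \prescript{t}{}{h}$ on $V$ (the inverse arising because substitution in the argument is contravariant). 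On the $S(M_{n-1,m})$-factor, the substitution $b \mapsto g_2^{-1} \cdot b \cdot h$ is, by definition, the action $\sigma_{n-1,m}$. Finally, the leftover scalar factor $\nu(g_1)^m \nu(h)^{-1}$ from \eqref{Eq: Type II reductive action} is precisely the character $\lambda_{1,0}$.

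The only mild obstacle I anticipate is bookkeeping: making sure the topological identification $S_Z(M) \cong \BC\llbracket V\rrbracket \otimeshat S(M_{n-1,m})$ and the formation of associated graded pieces behave compatibly with all group actions (in particular with the multiplier $\psi(\Trd(b \cdot \ta \cdot u))$ which mixes the $a$ and $b$ variables), and ensuring consistent conventions for transpose and inverse so that the formula $(g_1,h) \cdot v = \prescript{t}{}{g}_1^{-1} v \prescript{t}{}{h}$ for the action on $V$ exactly matches the substitution appearing in \eqref{Eq: Type II reductive action}. Once these conventions are aligned, the entire verification is mechanical.
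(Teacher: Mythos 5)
Your proposal is correct and follows essentially the same route as the paper, which proves the lemma by a direct computation from the explicit formulas \eqref{Eq: Type II reductive action} and \eqref{Eq: Type II unipotent action}; you have simply spelled out the details of that computation (filtration-stability, triviality of the $N_1$-action on the graded pieces via the $1+O(a)$ observation, identification of the $F_k$- and $S(M_{n-1,m})$-factors, and the scalar $\lambda_{1,0}$), all of which check out.
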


\begin{proof}
    Using the formulas (\ref{Eq: Type II reductive action}) and (\ref{Eq: Type II unipotent action}), the lemma follows from a direct calculation.
\end{proof}

\subsection{Proof of Theorem \ref{Thm: principal series CW}: Type II case}
In this subsection, we prove the following proposition, and use it to prove Theorem \ref{Thm: principal series CW} at the end.

\begin{prpt}\label{Prop: Type II principal induction}
    Let $n \geq 2$, and let $\chi$ be a Casselman--Wallach representation of $G_1$.
    Assume that for all $i$, $m$, and every $\tau \in \Rep_{G_{n-1}}^{\CWR}$,
    \begin{equation}\label{Eq: prop condition type II}\tag{C-II}
        \oH^{\CS}_{i}(G_{n-1}, \Omega_{n-1,m} \otimeshat \tau) \text{ is a Casselman--Wallach representation of } H_m.
    \end{equation}
    Then for all $i$, $m$, and every $\pi \in \Rep_{G_{n-1}}^{\CWR}$,
    $$
    \oH^{\CS}_{i}(G_n, \Omega_{n,m} \otimeshat \Ind_{P_1}^{G_n}(\chi \boxtimes \pi))
    $$
    is also a Casselman--Wallach representation of $H_m$.
\end{prpt}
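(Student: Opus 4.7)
The plan is to reduce the $G_n$-homology in question to a $P_1$-homology via Shapiro's lemma and then exploit the corank-one filtration \eqref{Eq: Corank1 filtration II}. Combining the projection formula
\[
\Omega_{n,m}\otimeshat\ind_{P_1}^{G_n}(\chi\boxtimes\pi)\;\cong\;\ind_{P_1\times H_m}^{G_n\times H_m}\!\bigl(\Omega_{n,m}|_{P_1\times H_m}\otimeshat(\chi\boxtimes\pi)\bigr)
\]
with Shapiro's lemma (Proposition~\ref{Prop: Shapiro Lem}) and the unimodularity of $G_n$ yields
\[
\oH^{\CS}_{i}\bigl(G_n,\Omega_{n,m}\otimeshat\ind_{P_1}^{G_n}(\chi\boxtimes\pi)\bigr)\;\cong\;\oH^{\CS}_{i}\bigl(P_1,\Omega_{n,m}\otimeshat(\chi\boxtimes\pi)\otimes\delta_{P_1}^{-1}\bigr),
\]
so it suffices to show the right-hand side is CW as an $H_m$-representation. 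Tensoring \eqref{Eq: Corank1 filtration II} with $(\chi\boxtimes\pi)\otimes\delta_{P_1}^{-1}$ produces a $P_1\times H_m$-equivariant short exact sequence whose associated long exact sequence, combined with Lemma~\ref{Lem: CW closed image} and Lemma~\ref{Lem: Hausdorff LES}, reduces the task to proving separately that the open-piece homology $\oH^{\CS}_{i}(P_1,S(U)\otimeshat\cdots)$ and the closed-piece homology $\oH^{\CS}_{i}(P_1,S_Z(M)\otimeshat\cdots)$ are CW.

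For the open piece, Lemma~\ref{Lem: Type II filtration open} shows that $\oH_{*}(\n_1,S(U))$ is concentrated in degree~$0$, and because $N_1$ acts trivially on $(\chi\boxtimes\pi)\otimes\delta_{P_1}^{-1}$, the Hochschild--Serre spectral sequence degenerates. Applying the projection formula together with Lemma~\ref{Lem: homology commute ind} identifies the result with $\ind^{H_m}_{Q_1}\oH^{\CS}_{i}\!\bigl(L_1,\rho_1\otimeshat\sigma_{n-1,m-1}\otimeshat(\chi\boxtimes\pi)\otimes\text{char}\bigr)$. A K\"unneth decomposition over $L_1=G_1\times G_{n-1}$ then separates the factors: the $G_1$-factor $\oH^{\CS}_{*}(G_1,\rho_1\otimeshat\chi\otimes\text{char})$ collapses to $\chi$ in degree~$0$---by Lemma~\ref{Lem: GL action on coinv} after the standard change of variables converting the diagonal action into pure left translation, together with the projectivity of $S(G_1)$ under left translation---while the $G_{n-1}$-factor $\oH^{\CS}_{*}(G_{n-1},\sigma_{n-1,m-1}\otimeshat\pi\otimes\text{char})$ is CW by the induction hypothesis~\eqref{Eq: prop condition type II}. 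Parabolically inducing a CW representation produces a CW representation, so the open piece is CW.

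For the closed piece the plan is to use the decreasing filtration $S_Z(M)_{k}$ of Lemma~\ref{Lem: Type II filtration closed}, whose graded quotients $F_k\otimes S(M_{n-1,m})$ carry a trivial $N_1$-action. The Hochschild--Serre spectral sequence for each graded piece thus takes the form
\[
E^{2}_{p,q}=\oH^{\CS}_{p}\!\bigl(L_1,\,\wedge^{q}\n_1^{\ast}\otimes F_k\otimes S(M_{n-1,m})\otimeshat(\chi\boxtimes\pi)\otimes\text{char}\bigr)\Rightarrow\oH^{\CS}_{p+q}(P_1,\cdots).
\]
K\"unneth splits each $E^{2}$ term; the $G_{n-1}$-factor is CW by \eqref{Eq: prop condition type II}, and the $G_1$-factor takes the shape $\oH^{\CS}_{*}(G_1,W_k\otimes\chi\otimes\text{char})$ with $W_k$ a finite-dimensional $G_1\times H_m$-module. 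For each $k$ this is CW, and since the $G_1$-central character carried by $F_k$ shifts linearly in $k$ while $\chi$ carries only finitely many infinitesimal characters, the $G_1$-factor vanishes once $k$ is sufficiently large. Lemma~\ref{Lem: homological filtration} then identifies $\oH^{\CS}_{i}(P_1,S_Z(M)\otimeshat\cdots)$ with an inverse limit of CW representations and shows it is itself CW, and combining with the open piece via Lemma~\ref{Lem: Hausdorff LES} concludes the argument. The main technical obstacle lies in this eventual vanishing on the $G_1$-factor of the closed piece: it requires careful bookkeeping of every character twist (the Weil twist, the $\lambda_{1,0}$ twist of Lemma~\ref{Lem: Type II filtration closed}, and the various modular characters) so that the total infinitesimal character on $F_k\otimes\chi$ stays nonzero for all but finitely many $k$.
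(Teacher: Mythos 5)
Your proposal is correct and follows essentially the same route as the paper: Shapiro's lemma plus the projection formula reduce to $P_1$-homology, the corank-one short exact sequence (F-II) plus Lemma~\ref{Lem: Hausdorff LES} split the problem into open and closed pieces, the open piece is handled by Lemma~\ref{Lem: Type II filtration open}, Lemma~\ref{Lem: homology commute ind}, the K\"unneth formula and condition (C-II), and the closed piece by the degree filtration, the eventual-vanishing observation on the $\BR^\times_{>0}$-component of $G_1$, and Lemma~\ref{Lem: homological filtration}. Your extra remark that the $G_1$-factor of the open piece collapses in degree~$0$ via Lemma~\ref{Lem: GL action on coinv} and the relative projectivity of $S(G_1)$ under left translation is a detail the paper leaves implicit; also note that $\wedge^q\n_1^{\ast}$ in your spectral sequence should be $\wedge^q\n_1$ (homology, not cohomology, of the abelian nilradical), though this does not affect the argument.
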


Following Shapiro's lemma, we know that
$$
\oH^{\CS}_{i}(G_{n}, \Omega_{n,m} \otimeshat \Ind^{G_{n+1}}_{P_1}(\chi \boxtimes \pi)) \cong \oH^{\CS}_{i}(P_1, \Omega_{n,m} \otimeshat (\chi \boxtimes \pi) \otimes \delta_{P_1}^{-1}).
$$
Consider the short exact sequence (\ref{Eq: Corank1 filtration II}) in Subsection \ref{Subsec: corank1 Filtration Type II},
    \begin{equation*}
        0 \xrightarrow{} S(U)  \xrightarrow{} S(M)  \xrightarrow{} S_{Z}(M) \xrightarrow{} 0.
    \end{equation*}
According to Lemma \ref{Lem: Hausdorff LES}, Proposition \ref{Prop: Type II principal induction} follows once we show that, under the condition (\ref{Eq: prop condition type II}),
$$\oH^{\CS}_{i}(P_1, S(U) \otimeshat (\chi \boxtimes \pi) \otimes \delta_{P_1}^{-1}) \quad \text{and} \quad \oH^{\CS}_{i}(P_1, S_{Z}(M) \otimeshat (\chi \boxtimes \pi) \otimes \delta_{P_1}^{-1})
$$ 
are Casselman--Wallach representations for all $i \geq 0$.
These will be proved in Lemmas~\ref{Lem: Type II open orbit}–\ref{Lem: Type II closed orbit}.
Throughout these lemmas, we assume the condition~(\ref{Eq: prop condition type II}).

We begin with the open piece.

\begin{lemt}\label{Lem: Type II open orbit}
    For all $i \geq 0$,
    $\oH^{\CS}_{i}(P_1, S(U) \otimeshat (\chi \boxtimes \pi)\otimes \delta_{P_1}^{-1})$ is a Casselman--Wallach representation of $H_m$.
\end{lemt}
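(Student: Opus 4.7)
The strategy is to run the Hochschild--Serre spectral sequence for $P_1 = L_1 \ltimes N_1$, compute the inner $N_1$-homology via Lemma \ref{Lem: Type II filtration open}, commute the resulting geometric induction with the $L_1$-homology using Lemma \ref{Lem: homology commute ind}, split $L_1 = G_1 \times G_{n-1}$ with K\"unneth, and then invoke the inductive hypothesis (C-II) on the $G_{n-1}$-factor while treating the $G_1$-factor by a shearing argument \`a la Subsection \ref{Subsec: (GL1,GLm)}.

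First, I would analyze the inner $N_1$-homology of $S(U) \otimeshat (\chi \boxtimes \pi) \otimes \delta_{P_1}^{-1}$. Since $(\chi \boxtimes \pi) \otimes \delta_{P_1}^{-1}$ is inflated from $L_1$, it pulls outside the $N_1$-homology. By Lemma \ref{Lem: Type II filtration open}, the latter vanishes in positive degree and, in degree zero, equals
$$
I \;:=\; \ind^{L_1 \times H_m}_{L_1 \times Q_1}\bigl(\lambda_{1,1} \otimes \rho_1 \otimeshat \sigma_{n-1, m-1}\bigr).
$$
The Hochschild--Serre spectral sequence thus collapses, and the problem reduces to computing
$$
\oH^{\CS}_i\bigl(L_1,\; I \otimeshat (\chi \boxtimes \pi) \otimes \delta_{P_1}^{-1}\bigr).
$$

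Next, using the projection formula to push $(\chi \boxtimes \pi) \otimes \delta_{P_1}^{-1}$ inside the induction, I would apply Lemma \ref{Lem: homology commute ind} with $G = P = L_1$, $H = H_m$, $Q = Q_1$ (so that $\delta_{G/P} = 1$) to commute the $L_1$-homology with the induction from $Q_1$ to $H_m$. This yields $\ind^{H_m}_{Q_1}\oH^{\CS}_i(L_1, W)$ for an explicit representation $W$ of $L_1 \times Q_1$ assembled from $\lambda_{1,1}$, $\rho_1$, $\sigma_{n-1,m-1}$, $\chi \boxtimes \pi$, and $\delta_{P_1}^{-1}$. Splitting $L_1 = G_1 \times G_{n-1}$ and applying the K\"unneth formula, the $G_1$-factor takes the form $\oH^{\CS}_p(G_1, \rho_1 \otimeshat \chi \otimes \eta)$ for an explicit character $\eta$; the shearing trick identifies $\rho_1 \otimeshat \chi$ with $S(G_1) \otimeshat \chi$ where $G_1$ acts only by left translation, and Shapiro's lemma for $\{1\} \hookrightarrow G_1$ (together with the unimodularity of $G_1$) then gives vanishing for $p > 0$ and $\chi \otimes \eta$ for $p = 0$ as an $H_1$-representation (with the $\chi$-action transferred to the $H_1 \cong G_1$ side, compatibly with Lemma \ref{Lem: GL action on coinv}). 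The $G_{n-1}$-factor $\oH^{\CS}_q(G_{n-1}, \sigma_{n-1, m-1} \otimeshat \pi \otimes \eta')$ coincides, up to a character twist that can be absorbed into $\pi$, with $\oH^{\CS}_q(G_{n-1}, \Omega_{n-1, m-1} \otimeshat \pi')$, and is therefore a Casselman--Wallach representation of $H_{m-1}$ by the inductive hypothesis (C-II) applied with $m$ replaced by $m-1$.

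Combining these, $\oH^{\CS}_i(L_1, W)$ is a Casselman--Wallach representation of $H_1 \times H_{m-1}$; in particular it is Hausdorff and NF, which retroactively validates the applications of Lemma \ref{Lem: homology commute ind} and the K\"unneth formula. Finally, the parabolic induction $\ind^{H_m}_{Q_1}$ from $Q_1$ (with Levi $H_1 \times H_{m-1}$) to $H_m$ preserves the Casselman--Wallach property, yielding the desired conclusion. The main technical difficulty will be the bookkeeping of the accumulated character twists coming from $\lambda_{1,1}$, $\delta_{P_1}$, and various modular characters, and verifying Hausdorffness at each intermediate step so that the commutation lemmas apply as stated.
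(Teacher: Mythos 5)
Your proposal is correct and follows essentially the same route as the paper's proof: reduce via the Hochschild--Serre spectral sequence and Lemma \ref{Lem: Type II filtration open}, commute induction past $L_1$-homology with Lemma \ref{Lem: homology commute ind}, apply the K\"unneth formula for $L_1 = G_1 \times G_{n-1}$, and close via (\ref{Eq: prop condition type II}) on the $G_{n-1}$-factor. Your explicit treatment of the $G_1$-factor via the shearing isomorphism with $S(G_1)$ and Shapiro's lemma (rather than just citing Lemma \ref{Lem: GL action on coinv} in degree zero) is in fact slightly more careful than the paper's terse statement, but the content is the same.
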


\begin{proof}
    Using Lemma \ref{Lem: Type II filtration open} and the spectral sequence argument, we obtain
    $$
    \oH^{\CS}_{i}(P_1, S(U) \otimeshat (\chi \boxtimes \pi)\otimes \delta_{P_1}^{-1}) \cong \oH^{\CS}_{i}(L_1, \ind^{L_1 \times H_m}_{L_1 \times Q_1}(\lambda_{1,1} \otimes \rho_1 \otimeshat \sigma_{n-1, m-1}) \otimeshat (\chi \boxtimes \pi)\otimes \delta_{P_1}^{-1}),
    $$
    provided that the right-hand side is a Casselman--Wallach representation of $H_m$. According to Lemma \ref{Lem: homology commute ind}, we have
    $$
    \oH^{\CS}_{i}(L_1, \ind^{L_1 \times H_m}_{L_1 \times Q_1}(\lambda_{1,1} \otimes \rho_1 \otimeshat \sigma_{n-1, m-1}) \otimeshat (\chi \boxtimes \pi)\otimes \delta_{P_1}^{-1}) \cong 
    \ind^{H_m}_{Q_1} \oH^{\CS}_{i}(L_1, \rho_1 \otimeshat \Omega_{n-1, m-1} \otimeshat (\chi \boxtimes \pi)\otimes \eta),
    $$
    where $\eta$ is a certain product of modular character and normalization character. Since it has no influence on our result, we just denote it by $\eta$ to simplify the notation. We shall denote its restriction to $G_{1}$ (resp. $G_{n-1}$) by $\eta_{1}$ (resp. $\eta_{n-1}$).
    According to the \kun formula, we have
    $$
    \oH^{\CS}_{i}(L_1, \rho_1 \otimeshat \Omega_{n-1, m-1} \otimeshat (\chi \boxtimes \pi)\otimes \eta)
    \cong \bigoplus_{p+q=i} 
    \oH^{\CS}_{p}(G_1, \rho_1 \otimes \chi \otimes \eta_1) 
    \otimeshat 
    \oH^{\CS}_{q}(G_{n-1},  \Omega_{n-1,m-1}\otimeshat  \pi  \otimes \eta_{n-1} ).
    $$
    By the condition (\ref{Eq: prop condition type II}), the right-hand side of this equation is a Casselman--Wallach representation of $H_1 \times H_{m-1}$. 
    Therefore, all the representations appearing above are Casselman--Wallach representations of $H_m$.
\end{proof}

Now we turn to the closed piece.
As in Lemma \ref{Lem: Type II filtration closed}, $S_{Z}(M)$ admits a decreasing filtration $S_{Z}(M)_k$ with graded pieces 
$$
S_{Z}(M)_k / S_{Z}(M)_{k+1} \cong \lambda_{1,0} \otimes F_k \otimes \sigma_{n-1,m},
$$
where $F_k$ is the space of homogeneous polynomials on $V$ of total degree $k$.

\begin{lemt}\label{Lem: GL closed grading vanish}
    For all $i$ and $k$, 
    $$\oH^{\CS}_{i}(P_1, (\chi \boxtimes \pi) \otimeshat (S_{Z}(M)_k / S_{Z}(M)_{k+1}) \otimes \delta_{P_1}^{-1} )$$
    is a Casselman--Wallach representation of $H_m$. 
    Moreover, there exists an integer $K$ such that for all $k > K$ and all $i$,  
    $$\oH^{\CS}_{i}(P_1, (\chi \boxtimes \pi) \otimeshat (S_{Z}(M)_k / S_{Z}(M)_{k+1}) \otimes \delta_{P_1}^{-1} ) = 0.$$
\end{lemt}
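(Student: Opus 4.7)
The plan is to apply the Hochschild--Serre spectral sequence to $P_1=L_1\ltimes N_1$ acting on
\[
V_k := (\chi\boxtimes\pi)\otimeshat (S_{Z}(M)_k/S_{Z}(M)_{k+1}) \otimes \delta_{P_1}^{-1}.
\]
By Lemma~\ref{Lem: Type II filtration closed}, $N_1$ acts trivially on the graded piece, and it acts trivially on $\chi\boxtimes\pi$ (extended from $L_1$) and on $\delta_{P_1}^{-1}$, so $N_1$ acts trivially on all of $V_k$. Since $N_1\cong M_{1,n-1}$ is abelian, one has
\[
\oH_q^{\CS}(N_1, V_k) \cong \wedge^q\n_1 \otimes V_k
\]
as $L_1\times H_m$-representations, so the spectral sequence reads
\[
E^2_{p,q} = \oH_p^{\CS}\!\left(L_1,\, \wedge^q\n_1 \otimes V_k\right) \Longrightarrow \oH_{p+q}^{\CS}(P_1, V_k).
\]

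Next I would apply the Künneth formula to $L_1 = G_1\times G_{n-1}$. After decomposing the finite-dimensional $G_1\times G_{n-1}\times H_m$-representation $\wedge^q\n_1 \otimes F_k \otimes(\text{characters from }\lambda_{1,0}, \delta_{P_1}^{-1})$ into external tensor products, each $E^2_{p,q}$ becomes a finite direct sum of terms
\[
\oH_a^{\CS}\!\left(G_1,\, \chi\otimes U_j \otimes F_k \otimes \eta_{G_1}\right) \otimeshat \oH_b^{\CS}\!\left(G_{n-1},\, \pi\otimes W_j\otimes \sigma_{n-1,m}\otimes \eta_{G_{n-1}}\right),
\]
with $a+b=p$ and $\eta_{G_1},\eta_{G_{n-1}}$ one-dimensional characters, and with residual $H_m$-actions on each factor coming from $F_k$, $\sigma_{n-1,m}$, and $\lambda_{1,0}|_{H_m}$. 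The second factor is of the form $\oH_b^{\CS}(G_{n-1}, \Omega_{n-1,m}\otimeshat\tau)$ for some $\tau \in \Rep^{\CWR}_{G_{n-1}}$, hence Casselman--Wallach over $H_m$ by the inductive hypothesis~\eqref{Eq: prop condition type II}. The first factor is the $G_1$-homology of a finite-length Casselman--Wallach $G_1$-module; by Proposition~\ref{Prop: homology comparison} it is finite-dimensional as a vector space, and the residual $H_m$-action coming through $F_k$ makes it a finite-dimensional (hence CW) $H_m$-module. The completed projective tensor product of a finite-dimensional with a CW representation of $H_m$ (diagonal action) is again CW, so every $E^2_{p,q}$ is CW over $H_m$. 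As the spectral sequence is supported in a bounded range of $(p,q)$ and $\Rep^{\CWR}_{H_m}$ is abelian by Lemma~\ref{Lem: CW closed image}, the abutment $\oH_i^{\CS}(P_1, V_k)$ is CW over $H_m$.

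For the vanishing claim, I would decompose $G_1 = \GL_1(\RD)$ as $\BR^{\times}_{>0}\times K_1$ via~\eqref{Eq: D decomposition}, as in the proof of Corollary~\ref{Cor: GL1GLm higher vanish}. Since $K_1$ is compact, every smooth Fréchet $K_1$-representation is relatively projective (Proposition~\ref{Prop: projective hausdorff}), so a Hochschild--Serre argument reduces the vanishing of $\oH_a^{\CS}(G_1, \chi\otimes U_j\otimes F_k\otimes\eta_{G_1})$ to that of the $\BR^{\times}_{>0}$-homology of its (finite-dimensional) $K_1$-invariants. Inspecting the formula in Lemma~\ref{Lem: Type II filtration closed}, the character by which $\BR^{\times}_{>0}$ acts on $F_k$ is a nontrivial power of $\nu$ whose exponent grows linearly with $k$, whereas $\chi$ has only finitely many generalized $\BR^{\times}_{>0}$-weights (coming from its Jordan--Hölder composition factors). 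Consequently, for $k$ larger than a uniform bound $K$ depending on these weights and on the finitely many pieces $U_j,\eta_{G_1}$ that arise, no weight of $\chi\otimes U_j\otimes F_k\otimes\eta_{G_1}$ equals zero, forcing both $\oH_0^{\CS}$ and $\oH_1^{\CS}$ over $\BR^{\times}_{>0}$ to vanish. Thus $E^2_{p,q}=0$ for all $p,q$ whenever $k>K$, and the spectral sequence yields the claimed vanishing. The main technical obstacle is bookkeeping the various normalization characters and confirming the uniformity of $K$; the Casselman--Wallach property itself then drops out cleanly from the spectral-sequence argument above.
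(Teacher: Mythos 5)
Your proposal is correct and follows essentially the same strategy as the paper: Hochschild--Serre for $P_1=L_1\ltimes N_1$ using that $N_1$ acts trivially on the graded piece, K\"unneth for $L_1=G_1\times G_{n-1}$, finite-dimensionality of the $G_1$-homology factor paired with the inductive hypothesis~\eqref{Eq: prop condition type II} for the $G_{n-1}$-factor, and the $\BR^{\times}_{>0}$-weight growth on $F_k$ for the vanishing. The only cosmetic differences are that you write out $\oH_q^{\CS}(N_1,V_k)\cong\wedge^q\n_1\otimes V_k$ and decompose $G_1\cong K_1\times\BR^{\times}_{>0}$ explicitly, both of which the paper leaves implicit.
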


\begin{proof}    
    By the spectral sequence, it suffices to prove the statement for $$\oH^{\CS}_{i}(L_1,  \oH_{j}(\n_1, \lambda_{1,0} \otimes F_k \otimes \sigma_{n-1,m}) \otimeshat (\chi \boxtimes \pi)\otimes \delta_{P_1}^{-1}).$$
    Let $\xi_{j} \boxtimes \beta_{j}$ be an irreducible $G_1 \times G_{n-1}$ constituent of $\oH_{j}(\n_1, \BC)$. By Lemma \ref{Lem: Type II filtration closed} and the long exact sequence argument, we further reduce to
    \begin{multline*}
        \oH^{\CS}_{i}(L_1,  (\chi \boxtimes \pi) \otimeshat (\xi_{j} \boxtimes \beta_{j}) \otimes \lambda_{1,0} \otimes F_k \otimes \sigma_{n-1,m} \otimes \delta_{P_1}^{-1}) \cong \\
    \bigoplus_{p+q=i}
    \oH^{\CS}_{p}(G_1,  \xi'_{j} \otimes \chi  \otimes F_k ) 
    \otimes
    \oH^{\CS}_{q}(G_{n-1},  \beta'_{j} \otimes \pi \otimeshat \Omega_{n-1,m}),
    \end{multline*}
    where $\beta'_{j}$ (resp. $\xi'_{j}$) denotes $\beta_{j}$ (resp. $\xi_{j}$) twisted by certain inessential character.
    By the condition (\ref{Eq: prop condition type II}), $\oH^{\CS}_{q}(G_{n-1},  \beta'_{j} \otimes \pi \otimeshat \Omega_{n-1,m})$ is a Casselman--Wallach representation of $H_m$.
    Meanwhile, $\oH^{\CS}_{p}(G_1,  \xi'_{j} \otimes \chi  \otimes F_k )$ is finite-dimensional.
    Consequently, 
    $$\oH^{\CS}_{i}(L_1,  (\chi \boxtimes \pi) \otimeshat (\xi_{j} \boxtimes \beta_{j}) \otimes \lambda_{1,0} \otimes F_k \otimes \sigma_{n-1,m} \otimes \delta_{P_1}^{-1})$$
    is a Casselman--Wallach representation of $H_m$.

    Moreover, since $\xi'_{j} \otimes \chi$ is fixed, there must exist a constant $K_{i,j}$, such that for all $k > K_{i,j}$, 
    the $\BR^{\times}_{>0}$-component of $G_1$ acts on $\xi'_{j} \otimes \chi  \otimes F_k$ via a nontrivial character.
    Hence, for all $k > K_{i,j}$, 
    $$\oH^{\CS}_{i}(L_1,  \oH_{j}(\n_1, \lambda_{1,0} \otimes F_k \otimes \sigma_{n-1,m}) \otimeshat (\chi \boxtimes \pi)\otimes \delta_{P_1}^{-1}) = 0.$$
    Let $K:= \max\{K_{i,j}\}$. The lemma follows.
\end{proof}

\begin{lemt}\label{Lem: Type II closed orbit}
    For all $i \geq 0$,
    $\oH^{\CS}_{i}(P_1, S_Z(M) \otimeshat (\chi \boxtimes \pi) \otimes \delta_{P_1}^{-1})$ is a Casselman--Wallach representation of $H_m$.
\end{lemt}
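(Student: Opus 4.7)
The plan is to combine Lemma \ref{Lem: GL closed grading vanish} with Lemma \ref{Lem: homological filtration} applied to the $P_1 \times H_m$-stable decreasing filtration obtained from the degree filtration on $S_Z(M)$. Concretely, set
\[
W := S_Z(M) \otimeshat (\chi \boxtimes \pi) \otimes \delta_{P_1}^{-1}, \qquad W_k := S_Z(M)_k \otimeshat (\chi \boxtimes \pi) \otimes \delta_{P_1}^{-1}.
\]
Since $\chi \boxtimes \pi$ is nuclear Fr\'echet and $\otimeshat$ is exact on NF-spaces, $\{W_k\}$ is a decreasing $P_1 \times H_m$-stable filtration of $W$ with graded pieces
\[
W_k/W_{k+1} \cong (S_Z(M)_k/S_Z(M)_{k+1}) \otimeshat (\chi \boxtimes \pi) \otimes \delta_{P_1}^{-1}.
\]

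The first step is to verify $W \cong \varprojlim_k W/W_k$. By construction $S_Z(M) \cong \BC\llbracket V\rrbracket \otimeshat S(M_{n-1,m})$, where the formal power series ring carries its natural projective-limit topology $\BC\llbracket V\rrbracket \cong \varprojlim_k \BC[V]/\BC[V]_{\geq k}$. Since all spaces in sight are NF-spaces and the completed projective tensor product commutes with countable projective limits of NF-spaces, one obtains
\[
W \cong \varprojlim_k \bigl((S_Z(M)/S_Z(M)_k) \otimeshat (\chi \boxtimes \pi) \otimes \delta_{P_1}^{-1}\bigr) = \varprojlim_k W/W_k.
\]

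The second step is to check the hypotheses of Lemma \ref{Lem: homological filtration} for the pair $(W,\{W_k\})$. But this is precisely what Lemma \ref{Lem: GL closed grading vanish} provides: condition (1) that each $\oH_i^{\CS}(P_1, W_k/W_{k+1})$ is a Casselman--Wallach representation of $H_m$, and condition (2) that there exists $K$ beyond which all these homology groups vanish. Applying Lemma \ref{Lem: homological filtration} yields
\[
\oH_i^{\CS}(P_1, W) \;\cong\; \varprojlim_k \oH_i^{\CS}(P_1, W/W_k),
\]
and the space on either side is a Casselman--Wallach representation of $H_m$, which is the desired conclusion.

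The only technical point I expect to require care is the identification $W \cong \varprojlim_k W/W_k$, i.e.\ the interchange of $\otimeshat$ with the projective limit. This rests on the fact that $\chi \boxtimes \pi$ and each $S_Z(M)/S_Z(M)_k = (\BC[V]/\BC[V]_{\geq k}) \otimeshat S(M_{n-1,m})$ are NF-spaces and that $\otimeshat$ is exact on such spaces; the rest of the argument is a formal application of the lemmas already established.
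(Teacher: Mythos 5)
Your proof is correct and follows exactly the same route as the paper's: the paper's proof of this lemma simply says ``It follows directly from Lemmas \ref{Lem: homological filtration} and \ref{Lem: GL closed grading vanish}.'' You have additionally spelled out the one technical point the paper takes for granted, namely the identification $W \cong \varprojlim_k W/W_k$; your reasoning there is sound, since $\BC\llbracket V \rrbracket = \varprojlim_k \BC[V]/\BC[V]_{\geq k}$ is a countable product of finite-dimensional spaces and $\otimeshat$ with a fixed NF-space commutes with such a product.
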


\begin{proof}
    It follows directly from Lemmas \ref{Lem: homological filtration} and \ref{Lem: GL closed grading vanish}.
\end{proof}

\begin{proof}[Proof of Proposition \ref{Prop: Type II principal induction}]
    Using Lemma \ref{Lem: Hausdorff LES}, the proposition follows from Lemmas \ref{Lem: Type II open orbit} and \ref{Lem: Type II closed orbit}.
\end{proof}

We are now ready to prove Theorem \ref{Thm: principal series CW} for the Type II case.

\begin{proof}[Proof of Theorem \ref{Thm: principal series CW}: Type II case]
We prove that Theorem~\ref{Thm: principal series CW} holds for all dual pairs \((G_n,H_m)\) by induction on \(n\).  The base case \((\GL_1,\GL_m)\) has been treated in Subsection~\ref{Subsec: (GL1,GLm)}.

For the induction step, fix $n \geq 2$ and assume that for all $m$, Theorem \ref{Thm: principal series CW} holds for dual pairs $(G_{n-1}, H_m)$. 
By Proposition~\ref{Prop: principal series implies all}, it follows that Theorem~\ref{ThmA} holds for all pairs $(G_{n-1}, H_m)$, so the hypothesis of Proposition~\ref{Prop: Type II principal induction} is satisfied. 
Therefore, via induction by stages, the induction step follows from Proposition~\ref{Prop: Type II principal induction}. The theorem follows.
\end{proof}

\section{Type I dual pairs}\label{Sec: Type I CW}
In this section, we prove Theorem \ref{Thm: principal series CW} for the Type~I dual pairs. Our main tool remains the corank\mbox{-}one parabolic stable filtration on the Weil representation.

Although the same idea also works, we do not treat irreducible dual pairs containing an odd orthogonal factor here. The issue of the associated double cover does not affect our final result but complicates the notation; we leave the details of this case to the interested reader.

We fix the notation for this section. Our conventions are mainly taken from \cite[Section~16.2]{GKT2025ThetaBook}. Although \cite[Section~16.2]{GKT2025ThetaBook} treats only the non-archimedean case, the formula for the mixed model remains valid in the archimedean setting.

Let $F$ be an archimedean local field and let $\RD$ be a division algebra over $F$ with center $E$, together with an involution $\kappa$ such that $E^{\kappa} = F$.
For $d \in \RD$, write $\Bar{d}:=\kappa(d)$.
Let $\psi:F\to\mathbb{C}^{\times}$ be a non-trivial unitary character of $F$. 
Let $\Tr_{\RD/F}: \RD \to F$ be the trace map, defined by $\Tr_{\RD/F}(x)=x$ if $\RD =F$, and $\Tr_{\RD/F}(x)=x + \Bar{x}$ if $\RD \neq F$.
For a $\RD$\mbox{-}module $X$, let $\operatorname{Nrd}: \GL(X) \to E^{\times}$ be the reduced norm and denote $\nu:=|\operatorname{Nrd}|^r_E$, where $|\cdot|_{E}$ is the normalized absolute value of $E$, and $r = \sqrt{\dim_{E} \RD}$.

For a right $\RD$-module $X$, there is a canonical way to define a left $\RD$-module $\kX$. Namely, we set $\kX := X$ as an abelian group, and let $\RD$ act on it from the left as
$$
d \cdot x := x\Bar{d},\quad d \in \RD, \quad x \in X.
$$
Note that we have $\GL(\kX) = \GL(X)$ even as sets of maps on $X$.

Fix $\epsilon \in \{\pm 1\}$.
Let
\[
(G_{n},H_{m})=(G(W_{n}),H(V_{m}))
\]
be an irreducible Type I reductive dual pair, where $(W_{n}, q_W)$ is a $-\epsilon$-Hermitian right $\RD$\mbox{-}module of $\dim_{\RD}W_{n}=n$ and $(V_{m}, q_V)$ an $\epsilon$-Hermitian left $\RD$\mbox{-}module of $\dim_{\RD}V_{m}=m$. 
We assume that neither is an odd orthogonal group.

We fix a splitting data $(\chi,\psi)$ for $G_{n}$ (relative to $V_{m}$) and one $(\mu,\psi)$ for $H_{m}$ (relative to $W_{n}$), where $\chi$, $\mu$ are unitary characters of $E^{\times}$. 
Using this splitting data, we associate the Weil representation 
$\Omega_{n,m}$ to $G_{n}\times H_{m}$.

\subsection{Corank-one filtration}\label{Subsec: corank1 Filtration Type I}

In this subsection, we introduce the corank\mbox{-}one filtration for the Type~I dual pairs, namely a $P_1 \times H_m$-stable filtration on the Weil representation. Here, $P_1$ is the parabolic subgroup of $G_n$ defined in (\ref{Eq: P1 parabolic}) below.

We introduce notation for the case where both $W_n$ and $V_m$ are isotropic.
Let $x_1, x_1^{*}$ be isotropic vectors in $W_n$ such that $q_W(x_1, x_1^{*}) = 1$, and let $y_1, y_1^{*}$ be isotropic vectors in $V_m$ such that $q_V(y_1, y_1^{*}) = 1$.
Set
\[
X_{1}:=\Span_{\RD}\{x_{1}\}, \quad X^{*}_{1}:=\Span_{\RD}\{x^{*}_{1}\}, \quad
Y_{1}:=\Span_{\RD}\{y_{1}\}, \quad Y^{*}_{1}:=\Span_{\RD}\{y^{*}_{1}\}.
\]
Then we have the standard decompositions
\begin{equation}\label{Eq: Witt decomposition}
    W_{n}=X_{1} \oplus W_{n-2} \oplus X^{*}_{1}\quad\text{and}\quad V_{m}=Y_{1} \oplus V_{m-2} \oplus Y^{*}_{1},
\end{equation}
where $W_{n-2}$ (resp. $V_{m-2}$) is the orthogonal complement of $X_{1} \oplus X^{*}_{1}$ (resp. $Y_{1} \oplus Y^{*}_{1}$).

Let
\begin{equation}\label{Eq: P1 parabolic}
    P_{1}=L_{1}N_{1}\subseteq G_{n}
\end{equation}
be the maximal parabolic subgroup of $G_{n}$ stabilizing $X_{1}$.
The elements in $L_{1}$ and $N_{1}$ are, respectively, of the form
\[
\ell(a,g)=\begin{pmatrix}a\\ &g\\ &&(a^{*})^{-1}\end{pmatrix}\quad\text{and}\quad n(b,d)=\begin{pmatrix}1&b&d-\frac{1}{2}bb^*\\ &1&-b^*\\ &&1\end{pmatrix},
\]
where $g \in G(W_{n-2})$, $a \in \GL(X_1)$, $b \in \Hom(W_{n-2}, X_1)$, and $d \in \Herm(X_1^*, X_1)$, with 
$$\Herm(X_1^*, X_1):= \{d \in \Hom(X_1^*, X_1)| d^* = -d\}.$$
Here, the elements $a^* \in \GL(X_1^*)$, $b^* \in \Hom(X_1^*, W_{n-2})$, and $d^* \in \Hom(X_1^*, X_1)$ are defined by requiring that, for $w \in W_{n-2}$,
$$
q_W(ax_1, x_1^*) = q_W(x_1, a^* x_1^*), \ \ 
q_W(bw, x_1^*) = q_W(w, b^* x_1^*), \ \ 
q_W(dx_1^*, x_1^*) = q_W(x_1^*, d^* x_1^*).
$$
Denote
\[
N_{1,0}:=\left\{n(0,d)=\begin{pmatrix}1&&d\\ &1\\ &&1\end{pmatrix} : d\in \Herm(X_1^*, X_1) \right\}.
\]
Thus we have
\[
L_{1}=\GL(X_{1})\times G(W_{n-2}),\quad N_1 = \Hom(W_{n-2}, X_1) \times N_{1,0}.
\]
Similarly, we let
\[
Q_{1}=M_{1}U_{1}\subseteq H_{m}
\]
be the maximal parabolic subgroup of $H_{m}$ stabilizing $Y_{1}$, where $M_{1}=\GL(Y_{1})\times H(V_{m-2})$.

We work in the mixed model. The Weil representation admits the realization 
$$\Omega_{n,m} = S(X^*_1 \otimes V_m) \otimeshat \Omega_{n-2, m} = S(X^*_1 \otimes V_m,\ \Omega_{n-2, m}),$$
where $\Omega_{n-2, m}$ is the Weil representation of $G(W_{n-2}) \times H(V_m)$ with respect to the splitting data $(\chi, \mu, \psi)$.
We now describe the action explicitly. 
Let $\varphi \in S(X^*_1 \otimes V_m,\ \Omega_{n-2, m})$, and $x = x_1^* \otimes v \in X^*_1 \otimes V_m$.
\begin{itemize}
    \item $(\Omega_{n,m}(h)\varphi)(x) = \Omega_{n-2,m}(h)\varphi(h^{-1}x), \quad h \in H(V_{m}), $
    \item $(\Omega_{n,m}(l(1,g_0))\varphi)(x) = \Omega_{n-2,m}(g_0)\varphi(x), \quad g_0 \in G(W_{n-2}), $
    \item $(\Omega_{n,m}(l(a,1))\varphi)(x) = \chi_V(\Nrd a)\nu(a)^{\frac{m}{2}}\varphi(a^*x), \quad a \in \GL(X_1), $
    \item $(\Omega_{n,m}(n(b,0))\varphi)(x) = \rho_0((b^*x, 0))\varphi(x), \quad b \in \Hom(W_{n-2}, X_1), $
    \item $(\Omega_{n,m}(n(0,d))\varphi)(x) = \psi\left(\frac{1}{2}\Tr_{\RD/F}(q_W(x_1^*, -dx_1^*)\overline{q_V(v,v)}) \right)\varphi(x), \   d \in \Herm(X_1^*, X_1). $
\end{itemize}
Here, $\rho_0$ denotes the Heisenberg representation of the Heisenberg group $\RH(W_{n-2}\otimes V_{m})$ realized on the space of $\Omega_{n-2,m}$, and $\chi_V$ is the character associated with the splitting data, as defined in \cite[Formula (12.5)]{GKT2025ThetaBook}.

When $V_m$ is isotropic, we further realize $\Omega_{n-2, m}$ using the mixed model,
$$\Omega_{n-2,m} = S(W_{n-2} \otimes Y^*_1) \otimeshat \Omega_{n-2, m-2} = S(W_{n-2} \otimes Y^*_1,\ \Omega_{n-2, m-2}),$$
where $\Omega_{n-2, m-2}$ is the Weil representation of $G(W_{n-2}) \times H(V_{m-2})$ with respect to the splitting data $(\chi, \mu, \psi)$.
We describe the action explicitly. 
Let $\varphi_0 \in S(W_{n-2} \otimes Y^*_1,\ \Omega_{n-2, m-2})$, and $x = w \otimes y_1^* \in W_{n-2} \otimes Y^*_1$.
\begin{itemize}
    \item $(\Omega_{n-2,m}(g_0)\varphi_0)(x) = \Omega_{n-2, m-2}(g_0)\varphi_0(g_0^{-1}x), \quad g_0 \in G(W_{n-2})$, 
    \item $(\Omega_{n-2,m}(m(1,h_0))\varphi_0)(x) = \Omega_{n-2, m-2}(h_0)\varphi_0(x), \quad h_0 \in H(V_{m-2})$, 
    \item $(\Omega_{n-2,m}(m(a,1))\varphi_0)(x) = \chi_W(\Nrd a)\nu(a)^{\frac{n-2}{2}}\varphi_0(a^*x), \quad a \in \GL(Y_1)$, 
    \item $(\Omega_{n-2,m}(u(b,0))\varphi_0)(x) = \rho_{00}((b^*x, 0))\varphi_0(x), \quad b \in \Hom(V_{m-2}, Y_1)$, 
    \item $(\Omega_{n-2,m}(u(0,d))\varphi_0)(x) = \psi\left(\frac{1}{2}\Tr_{\RD/F}(q_W(w, w)\overline{q_V(y_1^*, -dy_1^*)}) \right)\varphi_0(x), \ d \in \Herm(Y_1^*, Y_1)$,
\end{itemize}
where $\rho_{00}$ denotes the Heisenberg representation of the Heisenberg group $\RH(W_{n-2}\otimes V_{m-2})$ realized on the space of $\Omega_{n-2,m-2}$, $\chi_W$ is the character associated with the splitting data, as defined in \cite[Formula (12.5)]{GKT2025ThetaBook}.

Let $M := X_1^* \otimes V_m$, $Z := \{0\} \subseteq M$, $U := M \setminus Z$. 
The space $S(M)\otimeshat \Omega_{n-2, m}$ contains a $P_1 \times H_m$ stable subspace $S(U)\otimeshat \Omega_{n-2, m}$.
Set $S_{Z}(M) := S(M) / S(U)$. Then we have 
    \begin{equation}\label{Eq: Corank1 filtration I}\tag{F-I}
        0 \xrightarrow{} S(U)\otimeshat \Omega_{n-2, m}  \xrightarrow{} S(M)\otimeshat \Omega_{n-2, m}  \xrightarrow{} S_{Z}(M)\otimeshat \Omega_{n-2, m} \xrightarrow{} 0.
    \end{equation}
Note that $S_Z(M) \cong \BC\llbracket M\rrbracket$, where we view $M$ as a real manifold.

We first study the $\n_1$-homology of the open orbit piece.
Let $\Isom(\kX_1, Y_1)$ be the space of invertible $\RD$-linear map from $\kX_1$ to $Y_1$.
The following lemma holds.
\begin{lemt}\label{Lem: Type I corank1 filtration open orbit}
    If $V_m$ is anisotropic, then 
    $$
    \oH_{i}(\n_1, S(U)\otimeshat \Omega_{n-2, m})
    =0\quad \forall\ i \in \BZ_{\geq 0}.
    $$

    If $V_m$ is isotropic, we have
    \[
    \oH_{i}(\n_1, S(U)\otimeshat \Omega_{n-2, m})
    =
    \left\{
    \begin{array}{ll} 
    \ind^{L_1 \times H_m}_{L_1 \times Q_1} (\lambda_{1,1} \otimes \rho_1 \otimeshat \Omega_{n-2, m-2})& \text{if } i=0, \\
    0 & \text{otherwise}.
    \end{array}
    \right.
    \]
    Here:
    \begin{itemize}
        \item $\rho_1$ is the natural action of $\GL(X_1) \times \GL(Y_1)$ on $S(\Isom(\kX_1, Y_1))$ as
        $$
        ((g,h)\cdot f)(x) := f(g^{-1} x h),
        $$
        for $(g,h) \in \GL(X_1) \times \GL(Y_1)$, $f \in S(\Isom(\kX_1, Y_1))$ and $x \in \Isom(\kX_1, Y_1)$;
        \item $\Omega_{n-2,m-2}$ is the Weil representation of $G(W_{n-2}) \times H(V_{m-2})$ with respect to the splitting data $(\chi, \mu, \psi)$;
        \item $\lambda_{1,1}$ is the character defined by
            \[\lambda_{1,1} =
                \begin{cases}
                \chi_V  \cdot \nu^{\frac{m}{2}} & \text{on } \GL(X_1) \\
                \chi_W \cdot \nu^{\frac{n-2}{2}} & \text{on } \GL(Y_1) 
                \end{cases},\]
                where $\chi_V$ (resp. $\chi_W$) is regarded as a character of $\GL(X_1)$ (resp. $\GL(Y_1)$) via the reduced norm $\Nrd$.
    \end{itemize} 
\end{lemt}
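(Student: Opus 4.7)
The plan is to compute the homology in two stages using the spectral sequence attached to the abelian normal subgroup $N_{1,0}\triangleleft N_1$: first $\oH_*(\n_{1,0},\cdot)$, and then $\oH_*(\n_1/\n_{1,0},\cdot)$. Each stage will be handled by an application of Lemma~\ref{Lem: Gourevitch Lemma}, combined with Witt's theorem and the mixed model description of $\Omega_{n-2,m}$.

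For the first stage, the explicit formula for $\Omega_{n,m}(n(0,d))$ shows that $N_{1,0}$ acts on $S(U)\otimeshat\Omega_{n-2,m}$ by multiplication by a scalar character depending only on the base point $x_1^*\otimes v\in U$. I would therefore apply Lemma~\ref{Lem: Gourevitch Lemma} with $\v=\n_{1,0}\cong\Herm(X_1^*,X_1)$ and
\[
\phi(x_1^*\otimes v)(d)=\tfrac{1}{2}\Tr_{\RD/F}\bigl(q_W(x_1^*,-dx_1^*)\,\overline{q_V(v,v)}\bigr).
\]
Once the regularity of $0$ is verified, the fiber $\phi^{-1}(0)\cap U$ consists of $x_1^*\otimes v$ with $v$ nonzero isotropic. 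In the anisotropic case this fiber is empty, so Lemma~\ref{Lem: Gourevitch Lemma} gives vanishing in every degree, and the spectral sequence produces $\oH_*(\n_1,S(U)\otimeshat\Omega_{n-2,m})=0$. In the isotropic case, the same lemma yields $\oH_0(\n_{1,0},S(U)\otimeshat\Omega_{n-2,m})=S(U_0)\otimeshat\Omega_{n-2,m}$ with higher homology vanishing, where $U_0:=\{x_1^*\otimes v : v\in V_m\setminus\{0\},\ q_V(v,v)=0\}$; the spectral sequence then collapses.

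For the isotropic case, I must next compute $\oH_*(\n_1/\n_{1,0},S(U_0)\otimeshat\Omega_{n-2,m})$ with $\n_1/\n_{1,0}\cong\Hom(W_{n-2},X_1)$. By Witt's theorem, $\GL(X_1)\times H_m$ acts transitively on $U_0$; taking the base point $x_1^*\otimes y_1$, its stabilizer lies in $L_1\times Q_1$, and a direct verification identifies the fiber of $U_0\to Q_1\backslash H_m$ with $\Isom(\kX_1,Y_1)$ as a $\GL(X_1)\times\GL(Y_1)$-space. Lemma~\ref{Lem: Schwartz Fibre bundle} then realizes $S(U_0)\otimeshat\Omega_{n-2,m}$ as a Schwartz induced representation from $L_1\times Q_1$, and Lemma~\ref{Lem: homology commute ind} reduces the homology computation to the stalk at $x_1^*\otimes y_1$, provided we verify Hausdorffness of the stalk homology.

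The stalk calculation is the crux. At the base point, the action of $b\in\Hom(W_{n-2},X_1)$ is the Heisenberg action $\rho_0((b^*x_1^*\otimes y_1,0))$; writing $\Omega_{n-2,m}=S(W_{n-2}\otimes Y_1^*)\otimeshat\Omega_{n-2,m-2}$ in the mixed model, the vector $b^*x_1^*\otimes y_1$ lies in $W_{n-2}\otimes Y_1$, the Lagrangian complementary to the Schrödinger variable $W_{n-2}\otimes Y_1^*$, so this action collapses to multiplication by the character $w_0\mapsto\psi\bigl(\Tr_{\RD/F}(q_W(b^*x_1^*,w_0))\bigr)$. A second application of Lemma~\ref{Lem: Gourevitch Lemma}, using the nondegeneracy of $q_W$, pinpoints $\phi^{-1}(0)=\{0\}$ and produces $\Omega_{n-2,m-2}$ in degree zero with vanishing higher homology. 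Reassembling via the fiber-bundle picture yields the stated $\ind^{L_1\times H_m}_{L_1\times Q_1}(\lambda_{1,1}\otimes\rho_1\otimeshat\Omega_{n-2,m-2})$; the character $\lambda_{1,1}$ is read off from the formula for $\Omega_{n,m}(\ell(a,1))$ on the $\GL(X_1)$-factor and from the mixed-model formula for $\Omega_{n-2,m}(m(a,1))$ on the $\GL(Y_1)$-factor. The main obstacle is precisely this final bookkeeping: reconciling the conventions of the mixed model, the splitting data $(\chi,\mu,\psi)$, and the modular characters for $P_1$ and $Q_1$ so that $\lambda_{1,1}$ and the equivariance giving $\rho_1$ come out exactly as stated.
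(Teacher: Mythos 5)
Your proposal is correct and follows essentially the same route as the paper's proof: you stage the computation through $N_{1,0}\triangleleft N_1$, apply Lemma~\ref{Lem: Gourevitch Lemma} with the map $\phi(x_1^*\otimes v)(d)=\Tr_{\RD/F}(q_W(x_1^*,-dx_1^*)\overline{q_V(v,v)})$ to reach $S(\CN\setminus\{0\})\otimeshat\Omega_{n-2,m}$ (or to conclude vanishing when $V_m$ is anisotropic), then use transitivity on the isotropic cone, identify the space as a Schwartz induction from $(L_1\times Q_1)$ (keeping the $N_1/N_{1,0}$-action along), and finish by a second application of the Gourevitch lemma on the stalk together with Lemma~\ref{Lem: homology commute ind}. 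This is precisely the paper's argument, and your identification of the stalk action as multiplication by the character $w_0\mapsto\psi(\Tr_{\RD/F}(q_W(b^*x_1^*,w_0)))$ via the complementary Lagrangian in the mixed model matches the paper's isomorphism $W_{n-2}\otimes Y_1^*\xrightarrow{\sim}\Hom(W_{n-2},X_1)^*$; the paper also leaves the final normalization of $\lambda_{1,1}$ as bookkeeping.
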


\begin{proof}
    Consider the map
    $$
    \begin{array}{rrcl}
     \phi: &U &\to& N_{1,0}^*\\
    &x_1^* \otimes v &\mapsto&
    \left( d \mapsto \Tr_{\RD/F}(q_W(x_1^*, -dx_1^*)\overline{q_V(v,v)}) \right).
    \end{array}
    $$
    When $V_m$ is anisotropic, $\phi$ has no zeros. 
    By Lemma \ref{Lem: Gourevitch Lemma} and the $N_{1,0}-$action on the mixed model, we obtain
    \[
    \oH_{i}(\n_{1,0}, S(U)\otimeshat \Omega_{n-2, m}) = 0, \quad \forall\ i \in \BZ_{\geq 0}.
    \]
    Now assume that $V_m$ is isotropic, and define 
    \[
    \CN :=\left\{ x_1^* \otimes v\in X_1^* \otimes V_{m} : q_V(v,v)=0 \right\}.
    \]
    One can verify directly that $0$ is a regular value of $\phi$, and $\phi^{-1}(0) = \CN \setminus \{0\}$.
    Applying Lemma \ref{Lem: Gourevitch Lemma} again, we obtain
    \[
    \oH_{i}(\n_{1,0}, S(U)\otimeshat \Omega_{n-2, m})
    =
    \left\{
    \begin{array}{ll} 
    S(\CN \setminus \{0\}) \otimeshat \Omega_{n-2, m}& \text{if } i=0, \\
    0 & \text{otherwise}.
    \end{array}
    \right.
    \]
    
    Note that $x_1^* \otimes y_1 \in \CN \setminus \{0\}$.
    Consider the action of $L_1 \times H_m$ on $\CN \setminus \{0\}$, we have an isomorphism
    $$
    S(\CN \setminus \{0\}) \otimeshat \Omega_{n-2, m} \cong \ind_{(L_1 \times Q_1) \ltimes (N_1/N_{1,0})}^{(L_1 \times H_m) \ltimes (N_1/N_{1,0})} (\lambda' \otimes \rho_1 \otimeshat S(W_{n-2} \otimes Y_1^*) \otimeshat \Omega_{n-2,m-2}).
    $$
    The actions on the right hand side are described as follows.
    \begin{itemize}
        \item $N_1/N_{1,0}$ acts only on $S(W_{n-2} \otimes Y_1^*)$ by
        $$
        n(b,0)\phi(w\otimes y_1^*) = \psi(\Tr_{\RD/F}(q_W(w, w_b)))\phi(w\otimes y_1^*), \quad \text{where } w_b:= b^*x_1^*;
        $$
        \item $\rho_1$ is the natural action of $\GL(X_1) \times \GL(Y_1)$ on $S(\Isom(\kX_1, Y_1))$ as
        $$
        ((g,h)\cdot f)(x) := f(g^{-1} x h),
        $$
        for $(g,h) \in \GL(X_1) \times \GL(Y_1)$, $f \in S(\Isom(\kX_1, Y_1))$ and $x \in \Isom(\kX_1, Y_1)$;
        \item $G(W_{n-2}) \times Q_1$ acts on $\Omega_{n-2,m} = S(W_{n-2} \otimes Y^*_1) \otimeshat \Omega_{n-2, m-2}$ via the restriction of the Weil representation;
        \item $\lambda'$ is a character on $\GL(X_1)$ defined by $\lambda' = \chi_V  \cdot \nu^{\frac{m}{2}}$.
    \end{itemize}

    Consider the map
    $$
    \begin{array}{rrcl}
    \psi: & W_{n-2t} \otimes Y^*_1 &\to& \Hom(W_{n-2}, X_1)^*\\
    & w \otimes y^*_1 &\mapsto&
    (b \mapsto \Tr_{\RD/F}(q_W(w, w_b)))
    \end{array}, \quad \text{where } w_b:= b^*x_1^*,
    $$
    which is an isomorphism.
    Using Lemma \ref{Lem: Gourevitch Lemma} once more, together with Lemma \ref{Lem: homology commute ind}, we obtain 
    \[
    \oH^{\CS}_{i}(N_1/N_{1,0}, S(\CN \setminus \{0\}) \otimeshat \Omega_{n-2, m})
    =
    \left\{
    \begin{array}{ll} 
    \ind_{L_1 \times Q_1}^{L_1 \times H_m} (\lambda_{1,1} \otimes \rho_1 \otimes  \Omega_{n-2,m-2})& \text{if } i=0, \\
    0 & \text{otherwise}.
    \end{array},
    \right.
    \]
    Here the actions are as described in the lemma.
\end{proof}

Now we turn to the closed orbit. Recall that $S_Z(M) \cong \BC\llbracket M\rrbracket$. It admits a decreasing filtration by degree $S_Z(M)_k = \BC\llbracket V_m\rrbracket_{\geq k}$. The associated graded piece 
\begin{equation*}
    F_k := S_Z(M)_k /S_Z(M)_{k+1}
\end{equation*}
is the space of homogeneous polynomials on $M$ of total degree $k$.

\begin{lemt}\label{Lem: Type I corank1 filtration closed orbit}
    As a representation of $P_1 \times H_m$,
    \begin{equation*}
        (S_Z(M)_k  /S_Z(M)_{k+1}) \otimes \Omega_{n-2, m} \cong \lambda_{1,0} \otimes F_k \otimes \Omega_{n-2, m}.
    \end{equation*}
    Here:
    \begin{itemize}
        \item $N_1$ acts trivially;
        \item $\Omega_{n-2,m}$ is the Weil representation of $G(W_{n-2}) \times H_m$ with respect to the splitting data $(\chi, \mu, \psi)$;
        \item $F_k$ carries a $\GL(X_1) \times H_{m}$-action induced from the natural $\GL(X_1) \times H_{m}$-action on $M$;
        \item $\lambda_{1,0} := \chi_V \cdot \nu^{\frac{m}{2}}$ is a character of $\GL(X_1)$.
    \end{itemize}
\end{lemt}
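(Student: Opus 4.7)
The plan is a direct computation using the explicit formulas for $\Omega_{n,m}$ in the mixed model recorded in Subsection~\ref{Subsec: corank1 Filtration Type I}. First, I would identify $S_Z(M)$ with the jet space at the origin, that is, the formal power series algebra $\BC\llbracket M\rrbracket$, so that $S_Z(M)_k$ corresponds to series vanishing to order $\geq k$ at $0$ and $F_k$ to the space of homogeneous polynomials of degree $k$. The topology is that of inverse limit of finite-dimensional quotients, so all actions to be considered are continuous. One then checks that the filtration is $P_1\times H_m$-stable.

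Next, I would read off the induced action on each graded piece by inspecting the Weil representation formulas from Subsection~\ref{Subsec: corank1 Filtration Type I}. For $a\in\GL(X_1)$, the formula $(\Omega_{n,m}(\ell(a,1))\varphi)(x)=\chi_V(\Nrd a)\nu(a)^{m/2}\varphi(a^*x)$ preserves polynomial degree in $x$ and gives the natural $\GL(X_1)$-action on $F_k$ twisted by the character $\lambda_{1,0}=\chi_V\cdot\nu^{m/2}$. For $g_0\in G(W_{n-2})$, the formula $(\Omega_{n,m}(\ell(1,g_0))\varphi)(x)=\Omega_{n-2,m}(g_0)\varphi(x)$ shows that $G(W_{n-2})$ does not touch the $M$-variable and acts on the $\Omega_{n-2,m}$-factor via the Weil representation. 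For $h\in H_m$, the formula $(\Omega_{n,m}(h)\varphi)(x)=\Omega_{n-2,m}(h)\varphi(h^{-1}x)$ combines the natural $H_m$-action on $M=X_1^*\otimes V_m$ (which induces the claimed action on $F_k$) with the Weil representation action on $\Omega_{n-2,m}$.

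The crucial point is triviality of $N_1$ on the associated graded. For $b\in\Hom(W_{n-2},X_1)$, the action multiplies $\varphi$ by the Heisenberg operator $\rho_0(b^*x,0)$. Writing $x=x_1^*\otimes v$, the element $b^*x\in W_{n-2}\otimes V_m$ depends linearly on $v$ and vanishes at $v=0$; in the Schr\"odinger model of $\rho_0$ this operator takes the form $\mathrm{id}+O(v)$ as a map-valued function of $v$, hence preserves the filtration by vanishing order and acts trivially on each $F_k$. For $d\in\Herm(X_1^*,X_1)$, the multiplier
\[
\psi\!\left(\tfrac{1}{2}\Tr_{\RD/F}\bigl(q_W(x_1^*,-dx_1^*)\,\overline{q_V(v,v)}\bigr)\right)
\]
is of the form $1+O(v^2)$ since $q_V(v,v)$ is quadratic in $v$, so the same conclusion holds.

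The only genuine obstacle is tracking the Schr\"odinger-model expansion of $\rho_0(b^*x,0)$ as an operator on $\Omega_{n-2,m}$ in powers of $v$, to verify that its lowest-order perturbation from the identity strictly raises the vanishing order. Once this is in place, assembling the four pieces produces the claimed isomorphism of $P_1\times H_m$-representations.
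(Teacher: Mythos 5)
Your argument is correct and matches the paper's own (much terser) three-sentence proof, which likewise just reads off the mixed-model formulas. The ``obstacle'' you flag at the end is not a genuine one: the only inputs needed are that $\rho_0((0,0))=\mathrm{id}$ (the Heisenberg representation applied to the identity element) and that $x\mapsto\rho_0((b^*x,0))$ is a smooth operator-valued map (automatic since $\Omega_{n-2,m}$ is a smooth Fr\'echet representation), which already imply that multiplication by this function preserves the vanishing-order filtration and is the identity on each graded quotient, so no Schr\"odinger-model expansion needs to be tracked.
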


\begin{proof}
    Using the explicit formulas in the mixed model, we observe that $N_{1}$ acts trivially on $M$, hence it acts trivially on $F_k$.
    The same formulas also show that, when restricted to the point
    $0 \in M$, $N_{1}$ acts trivially on $\Omega_{n-2, m}$.
    The remaining assertions follow directly from the mixed model formulas.
\end{proof}

\subsection{Proof of Theorem \ref{Thm: principal series CW}: Type I case}
In this subsection, we prove the following proposition and use it to prove Theorem \ref{Thm: principal series CW} for the Type I case at the end.

Let $(W_n, V_m)$ be as before, and denote by $\t_{W}$ (resp. $\t_{V}$) the Witt tower containing $W_n$ (resp. $V_m$).

\begin{prpt}\label{Prop: Type I principal induction}
    Assume that $W_n$ is isotropic. Let $W_n = X_1 \oplus W_{n-2} \oplus X^{*}_1$ be the decomposition as in (\ref{Eq: Witt decomposition}). 
    Let $\chi$ be a Casselman--Wallach representation of $\GL(X_1)$.
    Assume that for all $i$, for all $V_m \in \t_{V}$, and every $\tau \in \Rep_{G(W_{n-2})}^{\CWR}$,
    \begin{equation}\label{Eq: prop condition type I}\tag{C-I}
        \oH^{\CS}_{i}(G(W_{n-2}), \Omega_{n-2,m} \otimeshat \tau) \text{ is a Casselman--Wallach representation of } H(V_m).
    \end{equation}
    Then for all $i$, for all $V_m \in \t_{V}$, and every $\pi \in \Rep_{G(W_{n-2})}^{\CWR}$,
    $$
    \oH^{\CS}_{i}(G(W_n), \Omega_{n,m} \otimeshat \Ind_{P_1}^{G(W_n)}(\chi \boxtimes \pi)).
    $$
    is also a Casselman--Wallach representation of $H(V_m)$.    
\end{prpt}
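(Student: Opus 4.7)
The plan is to mirror the Type II argument (Proposition \ref{Prop: Type II principal induction}) using the corank-one filtration (\ref{Eq: Corank1 filtration I}) in place of (\ref{Eq: Corank1 filtration II}). First, Shapiro's lemma (Proposition \ref{Prop: Shapiro Lem}) reduces the statement to showing that
$$\oH^{\CS}_i(P_1,\ \Omega_{n,m} \otimeshat (\chi \boxtimes \pi) \otimes \delta^{-1}_{P_1})$$
lies in $\Rep^{\CWR}_{H(V_m)}$. The short exact sequence (\ref{Eq: Corank1 filtration I}) yields a long exact sequence in Schwartz homology, so by Lemma \ref{Lem: Hausdorff LES} together with Lemma \ref{Lem: CW closed image} it will suffice to treat the open piece $S(U) \otimeshat \Omega_{n-2,m}$ and the closed piece $S_Z(M) \otimeshat \Omega_{n-2,m}$ separately.

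For the open piece, I will apply the Hochschild--Serre spectral sequence for the nilpotent normal subgroup $N_1 \subseteq P_1$, together with Lemma \ref{Lem: Type I corank1 filtration open orbit}. When $V_m$ is anisotropic, the whole piece vanishes. When $V_m$ is isotropic, the $N_1$-homology is concentrated in degree zero and takes the form of an induction from $L_1 \times Q_1$; Lemma \ref{Lem: homology commute ind} then pulls the induction outside, yielding
$$\ind^{H(V_m)}_{Q_1} \oH^{\CS}_i\bigl(L_1,\ \rho_1 \otimeshat \Omega_{n-2,m-2} \otimeshat (\chi \boxtimes \pi) \otimes \eta\bigr)$$
for a suitable character $\eta$. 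The \kun formula then splits this as a tensor product of $\oH^{\CS}_p(\GL(X_1),\ \rho_1 \otimes \chi \otimes \eta_1)$, which is Casselman--Wallach by a direct computation parallel to Lemma \ref{Lem: GL action on coinv}, and $\oH^{\CS}_q(G(W_{n-2}),\ \Omega_{n-2,m-2} \otimeshat \pi \otimes \eta_{n-2})$, which is Casselman--Wallach by the hypothesis (\ref{Eq: prop condition type I}) applied to $V_{m-2} \in \t_V$. It is precisely this step that forces the hypothesis to be quantified over the full Witt tower.

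For the closed piece, I will filter $S_Z(M) \cong \BC\llbracket M \rrbracket$ by degree with graded pieces $F_k$ as in Lemma \ref{Lem: Type I corank1 filtration closed orbit}, and apply Lemma \ref{Lem: homological filtration}. Each subquotient is handled by Hochschild--Serre for $P_1 = L_1 \ltimes N_1$: since $N_1$ acts trivially on $\lambda_{1,0} \otimes F_k$, the $\n_1$-homology reduces to $\lambda_{1,0} \otimes F_k \otimes \Omega_{n-2,m} \otimes \wedge^j \n_1$, and after \kun the $\GL(X_1)$-factor $\oH^{\CS}_p(\GL(X_1),\ \xi'_j \otimes \chi \otimes F_k)$ is finite-dimensional, while the $G(W_{n-2})$-factor is Casselman--Wallach by (\ref{Eq: prop condition type I}), now for the same $V_m$. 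For the required vanishing at large $k$, I will argue as in Lemma \ref{Lem: GL closed grading vanish}: the center of $\GL(X_1) \cong \RD^{\times}$ contains a copy of $\BR^{\times}_{>0}$ acting on $F_k$ with weight growing linearly in $k$, whereas $\xi'_j \otimes \chi$ has a fixed $\BR^{\times}_{>0}$-weight, so for $k$ sufficiently large the tensor product carries a non-trivial $\BR^{\times}_{>0}$-character and the $\GL(X_1)$-homology vanishes.

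The main obstacle I anticipate is the careful accounting of the twisting characters $\lambda_{1,0}, \lambda_{1,1}, \delta_{P_1}$, the modular correction from Lemma \ref{Lem: homology commute ind}, and the splitting characters $\chi_V, \chi_W$, in order to verify that the final $\BR^{\times}_{>0}$-weight on $\xi'_j \otimes \chi \otimes F_k$ grows strictly with $k$. The structural steps themselves are essentially identical to the Type II argument; the genuinely new ingredients are the mixed-model formulas collected in Subsection \ref{Subsec: corank1 Filtration Type I}, which make explicit that $N_1$ acts trivially on the closed piece and that the sub-unipotent $N_{1,0}$ acts on the open piece through the quadratic form $q_V$, together with the Witt-tower book-keeping needed to apply (\ref{Eq: prop condition type I}) at both $V_m$ and $V_{m-2}$.
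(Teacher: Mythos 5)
Your proposal follows the same route as the paper's proof: Shapiro's lemma, the corank-one filtration (\ref{Eq: Corank1 filtration I}), the reduction via Lemma \ref{Lem: Hausdorff LES} to the open and closed pieces, the spectral sequence plus Künneth plus Lemma \ref{Lem: homology commute ind} for the open piece, and the degree filtration with the $\BR^{\times}_{>0}$-weight vanishing argument plus Lemma \ref{Lem: homological filtration} for the closed piece. Your identification of where the hypothesis (\ref{Eq: prop condition type I}) at $V_{m-2}$ versus at $V_m$ enters, and your observation that the $\GL(X_1)$-factor on the open piece is handled via the regular-representation mechanism of Lemma \ref{Lem: GL action on coinv}, match the paper's argument in Lemmas \ref{Lem: Type I open orbit}--\ref{Lem: Type I closed orbit}.
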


Following Shapiro's lemma, we know that
$$
\oH^{\CS}_{i}(G_{n}, \Omega_{n,m} \otimeshat \Ind^{G_{n+1}}_{P_1}(\chi \boxtimes \pi)) \cong \oH^{\CS}_{i}(P_1, \Omega_{n,m} \otimeshat (\chi \boxtimes \pi) \otimes \delta_{P_1}^{-1})
$$
Consider the short exact sequence (\ref{Eq: Corank1 filtration I}) in Subsection \ref{Subsec: corank1 Filtration Type I},
    \begin{equation*}
        0 \xrightarrow{} S(U)\otimeshat \Omega_{n-2, m}  \xrightarrow{} S(M)\otimeshat \Omega_{n-2, m}  \xrightarrow{} S_{Z}(M)\otimeshat \Omega_{n-2, m} \xrightarrow{} 0.
    \end{equation*}
According to Lemma \ref{Lem: Hausdorff LES}, Propsition \ref{Prop: Type I principal induction} follows once we show that, under the condition (\ref{Eq: prop condition type I}),  
$$\oH^{\CS}_{i}(P_1, S(U)\otimeshat \Omega_{n-2, m}   \otimeshat (\chi \boxtimes \pi) \otimes \delta_{P_1}^{-1}) \ \  \text{and} \ \  \oH^{\CS}_{i}(P_1, S_{Z}(M)\otimeshat \Omega_{n-2, m}   \otimeshat (\chi \boxtimes \pi) \otimes \delta_{P_1}^{-1})
$$ 
are Casselman--Wallach representations for all $i \geq 0$.
These will be proved in Lemmas~\ref{Lem: Type I open orbit}–\ref{Lem: Type I closed orbit}.
Throughout these lemmas, we assume the condition~(\ref{Eq: prop condition type I}).

We begin with the open orbit.

\begin{lemt}\label{Lem: Type I open orbit}
    For all $i \geq 0$,
    $\oH^{\CS}_{i}(P_1, S(U) \otimeshat \Omega_{n-2, m}  \otimeshat (\chi \boxtimes \pi)\otimes \delta_{P_1}^{-1})$ is a Casselman--Wallach representation of $H_m$.
\end{lemt}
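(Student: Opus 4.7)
The plan is to mirror the proof of Lemma \ref{Lem: Type II open orbit}, with Lemma \ref{Lem: Type I corank1 filtration open orbit} taking the place of Lemma \ref{Lem: Type II filtration open}. First, apply the Hochschild--Serre spectral sequence for $P_1 = L_1 \ltimes N_1$; together with Lemma \ref{Lem: Type I corank1 filtration open orbit}, it concentrates on the $q = 0$ row. If $V_m$ is anisotropic, that row is already zero and there is nothing more to prove. Otherwise, substituting the formula of Lemma \ref{Lem: Type I corank1 filtration open orbit} reduces the task to computing
\[
\oH^{\CS}_i\!\bigl(L_1,\; \ind^{L_1 \times H_m}_{L_1 \times Q_1}(\lambda_{1,1} \otimes \rho_1 \otimeshat \Omega_{n-2,m-2}) \otimeshat (\chi \boxtimes \pi) \otimes \delta_{P_1}^{-1}\bigr).
\]

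Next, I would apply Lemma \ref{Lem: homology commute ind} to pull the Schwartz induction outside of the $L_1$-homology, and then use the K\"unneth formula on the product decomposition $L_1 = \GL(X_1) \times G(W_{n-2})$. The result is
\[
\ind^{H_m}_{Q_1}\bigoplus_{p+q=i} \oH^{\CS}_p(\GL(X_1),\, \rho_1 \otimes \chi \otimes \eta_1) \otimeshat \oH^{\CS}_q(G(W_{n-2}),\, \Omega_{n-2,m-2} \otimeshat \pi \otimes \eta_{n-2}),
\]
where $\eta_1$ and $\eta_{n-2}$ gather the pertinent modular and normalization characters on $\GL(X_1)$ and $G(W_{n-2})$, respectively.

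Finally, each K\"unneth factor is identified as Casselman--Wallach: the second by the inductive hypothesis (\ref{Eq: prop condition type I}) applied to the Witt tower member $V_{m-2}$; the first by the $(\GL_1,\GL_1)$-style analysis behind Subsection \ref{Subsec: (GL1,GLm)}. More precisely, $\GL(X_1) \cong \GL(Y_1) \cong \GL_1(\RD)$, and $\rho_1$ is the two-sided regular representation on $S(\Isom(\kX_1, Y_1)) \cong S(\GL_1(\RD))$; since $\GL_1(\RD)$ acts on itself freely and transitively, $S(\GL_1(\RD))$ is relatively projective under one-sided translation, so Lemma \ref{Lem: GL action on coinv} identifies the $p = 0$ piece as $\chi \otimes \eta_1$ regarded as a $\GL(Y_1)$-representation, while the higher $p$ pieces vanish. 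Hence each K\"unneth summand is Casselman--Wallach over $M_1 = \GL(Y_1) \times H(V_{m-2})$, and its Schwartz induction from $Q_1$ to $H_m$ remains Casselman--Wallach. No conceptual obstacle is anticipated: once Lemma \ref{Lem: Type I corank1 filtration open orbit} is in hand, the remaining work is bookkeeping of character twists, which does not affect the Casselman--Wallach property.
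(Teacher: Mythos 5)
Your proposal is correct and follows the same route the paper takes: Hochschild--Serre with Lemma~\ref{Lem: Type I corank1 filtration open orbit} collapsing to the $q=0$ row, then Lemma~\ref{Lem: homology commute ind} to pull $\ind^{H_m}_{Q_1}$ out, the K\"unneth formula on $L_1 = \GL(X_1)\times G(W_{n-2})$, and finally condition~\eqref{Eq: prop condition type I} on the second factor. Your explicit identification of the $\GL(X_1)$-factor via relative projectivity of $S(\GL_1(\RD))$ and Lemma~\ref{Lem: GL action on coinv} is a correct and welcome elaboration of a detail the paper leaves terse (it is the same argument used for Proposition~\ref{Prop: GL1GLm H0}).
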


\begin{proof}
    When $V_m$ is anisotropic, this lemma follows trivially from Lemma~\ref{Lem: Type I corank1 filtration open orbit}.

    Now we assume $V_m$ is isotropic.
    Using Lemma~\ref{Lem: Type I corank1 filtration open orbit} again and the spectral sequence argument, we obtain
    $$
    \oH^{\CS}_{i}(P_1, S(U) \otimeshat (\chi \boxtimes \pi)\otimes\delta_{P_1}^{-1}) \cong \oH^{\CS}_{i}(L_1,\ind^{L_1 \times H_m}_{L_1 \times Q_1} (\lambda_{1,1} \otimes \rho_1 \otimeshat \Omega_{n-2, m-2}) \otimeshat (\chi \boxtimes \pi)\otimes\delta_{P_1}^{-1}),
    $$
    provided that the right-hand side is a Casselman--Wallach representation of $H_m$.
    According to Lemma \ref{Lem: homology commute ind}, we have
    \begin{multline*}
    \oH^{\CS}_{i}(L_1, \ind^{L_1 \times H_m}_{L_1 \times Q_1} (\lambda_{1,1} \otimes \rho_1 \otimeshat \Omega_{n-2, m-2}) \otimeshat (\chi \boxtimes \pi)\otimes\delta_{P_1}^{-1}) \cong \\
    \ind^{H_m}_{Q_1} \oH^{\CS}_{i}(L_1,  (\rho_1 \otimeshat \Omega_{n-2, m-2}) \otimeshat (\chi \boxtimes \pi) \otimes \eta)
    \end{multline*}
    where $\eta$ is a certain product of modular character and normalization character. Since it has no influence on our result, we just denote it by $\eta$ to simplify the notation. We denote its restriction to $\GL(X_{1})$ (resp. $G(W_{n-2})$) by $\eta_{1}$ (resp. $\eta_{n-2}$).
    According to the \kun formula, we have
    \begin{multline*}
    \oH^{\CS}_{i}(L_1,  (\rho_1 \otimeshat \Omega_{n-2, m-2}) \otimeshat (\chi \boxtimes \pi)\otimes \eta) \cong \\
    \bigoplus_{p+q=i} \oH^{\CS}_{p}(\GL(X_1),  \rho_1 \otimes \chi \otimes \eta_1) 
    \otimeshat 
    \oH^{\CS}_{q}(G(W_{n-2}),  \Omega_{n-2, m-2}\otimeshat  \pi  \otimes \eta_{n-2} )
    \end{multline*}
    By the condition~(\ref{Eq: prop condition type I}), the right-hand side of this equation is a Casselman--Wallach representation of $\GL(Y_1) \times H(V_{m-2})$. 
    Therefore, all the representations appearing above are Casselman--Wallach representations of $H_m$.
\end{proof}

Now, we turn to the closed orbit.
As in Lemma \ref{Lem: Type I corank1 filtration closed orbit}, $S_{Z}(M)\otimeshat \Omega_{n-2, m}$ admits a decreasing filtration $S_{Z}(M)_k\otimeshat \Omega_{n-2, m}$ with graded pieces 
$$
(S_Z(M)_k  /S_Z(M)_{k+1}) \otimes \Omega_{n-2, m} \cong \lambda_{1,0} \otimes F_k \otimes \Omega_{n-2, m}.,
$$
where $F_k$ is the space of homogeneous polynomials on $V$ of total degree $k$.
\begin{lemt}\label{Lem: Type I grading closed}
    For all $i$ and $k$, 
    $$\oH^{\CS}_{i}(P_1, (S_Z(M)_k  /S_Z(M)_{k+1}) \otimes \Omega_{n-2, m} \otimeshat (\chi \boxtimes \pi) \otimes\delta_{P_1}^{-1} )$$
    is a Casselman--Wallach representation of $H_m$. 
    Moreover, there exists an integer $K$ such that for all $k > K$ and all $i$,  
    $$\oH^{\CS}_{i}(P_1, (S_Z(M)_k  /S_Z(M)_{k+1}) \otimes \Omega_{n-2, m} \otimeshat (\chi \boxtimes \pi) \otimes\delta_{P_1}^{-1}) = 0.$$
\end{lemt}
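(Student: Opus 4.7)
The plan is to mirror the Type II argument in Lemma \ref{Lem: GL closed grading vanish}, with the pair $(\GL_1, \GL_{n-1})$ replaced by $(\GL(X_1), G(W_{n-2}))$. Set
$$T := \lambda_{1,0} \otimes F_k \otimes \Omega_{n-2,m} \otimeshat (\chi \boxtimes \pi) \otimes \delta_{P_1}^{-1}.$$
By Lemma \ref{Lem: Type I corank1 filtration closed orbit}, $N_1$ acts trivially on every tensor factor of $T$, so $\oH^{\CS}_q(N_1, T) \cong \oH_q(\n_1, \BC) \otimes T$, where $\oH_q(\n_1, \BC)$ is a finite-dimensional representation of $L_1 = \GL(X_1) \times G(W_{n-2})$. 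The Hochschild-Serre spectral sequence for $P_1 = L_1 \ltimes N_1$ then reads
$$E^{2}_{p,q} = \oH^{\CS}_p\bigl(L_1, \oH_q(\n_1, \BC) \otimes T\bigr) \Longrightarrow \oH^{\CS}_{p+q}(P_1, T),$$
so it suffices to control each $E^2$ entry.

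The next step is to filter $\oH_q(\n_1, \BC)$ by its $L_1$-Jordan-Hölder composition series, whose constituents are of the form $\xi_j \boxtimes \beta_j$ with $\xi_j, \beta_j$ irreducible finite-dimensional representations of $\GL(X_1)$ and $G(W_{n-2})$ respectively. Using Lemmas \ref{Lem: Hausdorff LES} and \ref{Lem: CW closed image} along the associated long exact sequences, one reduces to the case of a single constituent, where the Künneth formula decomposes the homology as
$$\bigoplus_{r+s=p} \oH^{\CS}_r\bigl(\GL(X_1), F_k \otimes \xi'_j \otimes \chi\bigr) \otimeshat \oH^{\CS}_s\bigl(G(W_{n-2}), \Omega_{n-2,m} \otimeshat (\beta'_j \otimes \pi)\bigr),$$
where the primes absorb the appropriate restrictions of $\lambda_{1,0}$ and $\delta_{P_1}^{-1}$. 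The second factor is a CW representation of $H(V_m)$ by the hypothesis (\ref{Eq: prop condition type I}). For the first factor, the key point is that $\GL(X_1) = \GL_1(\RD)$ decomposes as a compact group times $\BR^\times_{>0}$, so every CW representation of $\GL(X_1)$ is automatically finite-dimensional; in particular $F_k \otimes \xi'_j \otimes \chi$ is finite-dimensional, and its Schwartz homology is a finite-dimensional, hence CW, representation of $H_m$ via the $H_m$-action on $F_k$.

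For the vanishing, the scalar $\BR^\times_{>0} \subseteq \GL(X_1)$ acts on $M = X_1^* \otimes V_m$ by scaling, hence on $F_k$ through a character of the form $t \mapsto t^{ck}$ with $c$ a fixed nonzero constant. Only finitely many $\BR^\times_{>0}$-characters appear on $\xi'_j \otimes \chi$ across the relevant $j$ and $q$ (as $\chi$ is fixed and $\oH_q(\n_1, \BC)$ has finitely many Jordan-Hölder constituents), so for $k$ larger than some uniform $K$, the total $\BR^\times_{>0}$-character on $F_k \otimes \xi'_j \otimes \chi$ is nontrivial. The Koszul complex for the one-dimensional Lie algebra of $\BR^\times_{>0}$ is then acyclic, and combined with the compact factor this yields $\oH^{\CS}_r(\GL(X_1), F_k \otimes \xi'_j \otimes \chi) = 0$ for all $r$, delivering the required uniform vanishing. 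I expect the main practical obstacle to be the bookkeeping of the various normalization and splitting characters $(\chi_V, \chi_W, \lambda_{1,0}, \delta_{P_1}, \ldots)$ entering through the mixed-model formulas; once these are tracked cleanly, the argument is formal and parallel to the Type II case.
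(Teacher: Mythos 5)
Your proposal follows essentially the same route as the paper's proof: pass to the Hochschild--Serre spectral sequence for $P_1 = L_1 \ltimes N_1$, filter $\oH_j(\n_1,\BC)$ by its $L_1$-composition factors $\xi_j\boxtimes\beta_j$, apply K\"unneth on $L_1 = \GL(X_1)\times G(W_{n-2})$, invoke hypothesis~(\ref{Eq: prop condition type I}) for the $G(W_{n-2})$-factor, use finite-dimensionality of the $\GL(X_1)$-factor, and obtain vanishing for large $k$ by observing that the $\BR^{\times}_{>0}$-character on $F_k\otimes\xi'_j\otimes\chi$ becomes nontrivial. Your remark that $\GL(X_1)\cong\RD^{\times}$ has only finite-dimensional Casselman--Wallach representations is a useful clarification of why $\chi$ contributes only finitely many $\BR^{\times}_{>0}$-characters, which the paper leaves implicit.
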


\begin{proof}
    The proof is similar to that of Lemma \ref{Lem: GL closed grading vanish}.
    It suffices to prove the statement for
    $$
    \oH^{\CS}_{i}(L_1, \oH_{j}(\n_1, \BC) \otimes \lambda_{1,0} \otimes F_k \otimes \Omega_{n-2, m} \otimeshat (\chi \boxtimes \pi) \otimes\delta_{P_1}^{-1}).
    $$
    Let $\xi_j \boxtimes \beta_j$ be an irreducible $\GL(X_1) \times G(W_{n-2})$ constituent of $\oH_{j}(\n_1, \BC)$.
    We can further reduce to
    \begin{multline*}
    \oH^{\CS}_{i}(L_1, (\xi_j \boxtimes \beta_j) \otimes \lambda_{1,0} \otimes F_k \otimes \Omega_{n-2, m} \otimeshat (\chi \boxtimes \pi) \otimes\delta_{P_1}^{-1})
    \cong \\
    \bigoplus_{p+q=i} \oH^{\CS}_{p}(\GL(X_1), \xi'_j \otimes \chi \otimes F_k) \otimeshat 
    \oH^{\CS}_{q}(G(W_{n-2}), \beta'_j \otimes \Omega_{n-2, m} \otimeshat \pi).
    \end{multline*}
    where $\beta'_{j}$ (resp. $\xi'_{j}$) denotes $\beta_{j}$ (resp. $\xi_{j}$) twisted by a certain inessential character.
    Since $\xi'_{j} \otimes \chi$ is fixed, there must exist a constant $K_{i,j}$, such that for all $k > K_{i,j}$, 
    the $\BR^{\times}_{>0}$\mbox{-}component of $\GL(X_1)$ acts on $\xi'_{j} \otimes \chi  \otimes F_k$ via a nontrivial character.
    Thus, the lemma follows from a similar argument as in the proof of Lemma \ref{Lem: GL closed grading vanish}. 
\end{proof}

\begin{lemt}\label{Lem: Type I closed orbit}
    For all $i \geq 0$,
    $\oH^{\CS}_{i}(P_1, S_Z(M) \otimeshat \Omega_{n-2, m} \otimeshat (\chi \boxtimes \pi) \otimes \delta_{P_1}^{-1})$ is a Casselman--Wallach representation of $H_m$.
\end{lemt}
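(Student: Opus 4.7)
The plan is to deduce this directly from Lemma \ref{Lem: homological filtration} applied to the decreasing filtration $S_{Z}(M)_{k}\otimeshat \Omega_{n-2,m}$ on $S_{Z}(M)\otimeshat \Omega_{n-2,m}$ introduced before Lemma \ref{Lem: Type I corank1 filtration closed orbit}, after tensoring with $(\chi\boxtimes\pi)\otimes\delta_{P_1}^{-1}$. Since $S_{Z}(M)\cong \BC\llbracket M\rrbracket$ as a topological vector space and thus identifies with the inverse limit $\varprojlim_{k} S_{Z}(M)/S_{Z}(M)_{k}$, and since tensoring with an NF-space commutes with such inverse limits of quotients by closed subspaces, the entire representation $S_{Z}(M)\otimeshat \Omega_{n-2,m}\otimeshat(\chi\boxtimes\pi)\otimes\delta_{P_1}^{-1}$ also identifies with the inverse limit of its corresponding quotients, so Lemma \ref{Lem: homological filtration} applies once its two hypotheses are met.

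Next, I would verify the two hypotheses of Lemma \ref{Lem: homological filtration}. Both are provided by the immediately preceding Lemma \ref{Lem: Type I grading closed}: the first hypothesis, that each
\[
\oH^{\CS}_{i}\bigl(P_1,\;(S_{Z}(M)_{k}/S_{Z}(M)_{k+1})\otimes\Omega_{n-2,m}\otimeshat(\chi\boxtimes\pi)\otimes\delta_{P_1}^{-1}\bigr)
\]
is a Casselman--Wallach representation of $H_m$, is the first assertion of Lemma \ref{Lem: Type I grading closed}; the second hypothesis, that there exists $K\geq 0$ beyond which these graded-piece homologies vanish for all $i$, is the second assertion of Lemma \ref{Lem: Type I grading closed}. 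Together these let me conclude that for every $i\geq 0$,
\[
\oH^{\CS}_{i}\bigl(P_1,\;S_{Z}(M)\otimeshat \Omega_{n-2,m}\otimeshat(\chi\boxtimes\pi)\otimes\delta_{P_1}^{-1}\bigr) \;\cong\; \varprojlim_{k}\oH^{\CS}_{i}\bigl(P_1,\;S_{Z}(M)/S_{Z}(M)_{k}\otimeshat \Omega_{n-2,m}\otimeshat(\chi\boxtimes\pi)\otimes\delta_{P_1}^{-1}\bigr)
\]
and that this is a Casselman--Wallach representation of $H_m$, exactly as required.

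I do not expect any real obstacle here, since this lemma plays the role of a formal bookkeeping step: the genuine work sits in Lemma \ref{Lem: Type I grading closed} (finiteness and eventual vanishing of the associated graded homologies) and in Lemma \ref{Lem: homological filtration} (the limit-exchange mechanism). Thus, by parallel to the Type II argument given in the proof of Lemma \ref{Lem: Type II closed orbit}, the proof here is a one-line citation: it follows directly from Lemmas \ref{Lem: homological filtration} and \ref{Lem: Type I grading closed}.
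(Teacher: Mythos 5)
Your proposal is correct and matches the paper's proof exactly: the paper likewise cites Lemmas \ref{Lem: homological filtration} and \ref{Lem: Type I grading closed} (as a one-line argument), and your fuller explanation of how the inverse-limit hypotheses are verified is just the intended unpacking of that citation.
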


\begin{proof}
    It follows from Lemmas \ref{Lem: homological filtration} and \ref{Lem: Type I grading closed}.
\end{proof}

\begin{proof}[Proof of Proposition \ref{Prop: Type I principal induction}]
    Using Lemma \ref{Lem: Hausdorff LES}, the proposition follows from Lemmas \ref{Lem: Type I open orbit} and \ref{Lem: Type I closed orbit}.
\end{proof}

We are now ready to prove Theorem \ref{Thm: principal series CW} for the Type I case.

\begin{proof}[Proof of Theorem \ref{Thm: principal series CW}: Type I case]
Let $(W_n, V_m)$ be as before, and denote by $\t_{W}$ (resp. $\t_{V}$) the Witt tower containing $W_n$ (resp. $V_m$).
We prove that Theorem~\ref{Thm: principal series CW} holds for all dual pairs $(G(W), H(V))$ with $W \in \t_{W}$ and $V \in \t_{V}$. The proof proceeds by induction on the split rank $r_W$ of $W$. 

The base case is the compact dual pair, namely when $G(W)$ is compact. In this case, all higher-degree homology groups vanish. 
For the $\oH^{\CS}_0$ term, Hausdorffness is automatic, and the finite length property follows from Howe's original work \cite{Howe1989Remarks} and \cite[Section 3]{Howe1989TranscendingClassicalInvTheory}.

For the induction step, fix $W \in \t_{W}$ with $r_{W} \geq 1$. Assume that for all $V_m \in \t_{V}$, Theorem \ref{Thm: principal series CW} holds for dual pairs $(G(W'), H(V_m))$, where $W' \in \t_W$ has split rank $r_{W'} = r_{W} -1$. 
By Proposition~\ref{Prop: principal series implies all}, it follows that Theorem~\ref{ThmA} holds for all pairs $(G(W'), H(V_m))$, so the hypothesis of Proposition~\ref{Prop: Type II principal induction} is satisfied. 
Therefore, via induction by stages, the induction step follows from Proposition~\ref{Prop: Type II principal induction}. The theorem follows.
\end{proof}

\printbibliography

\end{document}